\newif\ifcolorcomments
\newcommand{\allowcomments}[4]{
\newcommand{#1}[1]{\ifdraft{\ifcolorcomments{\textcolor{#4}{##1 --#3}}\else{\textsl{ ##1 \ --#3}}\fi}\else{}\fi}
}
\allowcomments{\commumtaz}{MH}{Mumtaz}{green}
\allowcomments{\compb}{PB}{Phil}{blue}
\allowcomments{\comab}{AB}{A}{magenta}
\def\bc{\begin{center}}
\def\ec{\end{center}}
\def\be{\begin{equation}}
\def\ee{\end{equation}}
\def\N{\mathbb N}
\def\Q{\mathbb Q}
\newtheorem{lem}{Lemma}[section]
\newtheorem{dfn}[lem]{Definition}
\newtheorem{pro}[lem]{Proposition}
\newtheorem{thm}[lem]{Theorem}
\newtheorem{cor}[lem]{Corollary}
\numberwithin{equation}{section}
\newif\ifdraft\drafttrue
\begin{document}
\title[Hausdorff dimension and Dirichlet non-improvable numbers]{Hausdorff
dimension of a set in the theory of continued fractions}
\author[A. Bakhtawar]{Ayreena ~Bakhtawar}
\address{Department of Mathematics and Statistics, La Trobe University, PO
Box 199, Bendigo 3552, Australia. }
\email{A Bakhtawar: A.Bakhtawar@latrobe.edu.au}
\author[P. Bos]{Philip Bos}
\email{P Bos: P.Bos@latrobe.edu.au}
\thanks{The research of A. Bakhtawar is supported by La Trobe University
postgraduate research award.}
\author[M. Hussain]{Mumtaz Hussain}
\email{M Hussain: m.hussain@latrobe.edu.au}
\thanks{The research of M.~Hussain is supported by La Trobe University
startup grant.}

\begin{abstract}
%

  In this article we calculate the Hausdorff dimension of the set

\begin{equation*}
\mathcal{F}(\Phi )=\left\{ x\in \lbrack 0,1):\begin{aligned}a_{n+1}(x)a_n(x)
\geq \Phi(n) \ {\rm for \ infinitely \ many \ } n\in \mathbb N \ {\rm and }
\\ a_{n+1}(x)< \Phi(n) \ {\rm for \ all \ sufficiently \ large \ } n\in
\mathbb N \end{aligned}\right\}
\end{equation*}
where $\Phi :\mathbb{N}\rightarrow (1,\infty)$ is any function with $\lim_{n\to \infty} \Phi(n)=\infty.$ This in turn contributes to the metrical theory of continued
fractions as well as gives insights about the set of Dirichlet non-improvable numbers.

\end{abstract}

\maketitle

\section{Introduction}

\noindent Metric Diophantine approximation is concerned with the
quantitative analysis of the density of rationals in the reals. We commence with the famous $\textit{uniform}$ Diophantine approximation result, Dirichlet's theorem, which is a simple consequence of the
pigeon-hole principle.


\begin{thm}[Dirichlet 1842]
\label{Dir} \label{Dirichletsv} \noindent Given $x\in \mathbb{R}$ and $t>1$, there exists integers $p,q$ such that 
\begin{equation}
\left\vert qx-p\right\vert \leq \frac{1}{t}\quad \text{\textrm{and}}\quad
1\leq {q}<{t}.  \label{eqdir}
\end{equation}
\end{thm}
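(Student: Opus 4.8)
The plan is to invoke the pigeon-hole principle in the usual way, but in a form sharp enough to yield the \emph{strict} inequality $q<t$ simultaneously with $|qx-p|\le 1/t$. First I would isolate the following purely arithmetic claim: for every integer $N\ge 1$ and every real $x$ there are integers $p,q$ with $1\le q\le N$ and $|qx-p|\le \frac{1}{N+1}$. To prove it, consider the $N+2$ real numbers $0,\{x\},\{2x\},\dots,\{Nx\},1$ in $[0,1]$, where $\{\cdot\}$ denotes the fractional part, and partition $[0,1]$ into the $N+1$ sets $[\frac{j}{N+1},\frac{j+1}{N+1})$ for $j=0,\dots,N-1$ together with $[\frac{N}{N+1},1]$. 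Since there are $N+2$ points and only $N+1$ parts, two of the points fall in a common part and hence lie within $\frac{1}{N+1}$ of one another.

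Then I would split into the two possibilities. If the two coinciding points are $\{q_1x\}$ and $\{q_2x\}$ with $0\le q_1<q_2\le N$, set $q=q_2-q_1\in\{1,\dots,N\}$ and $p=\lfloor q_2x\rfloor-\lfloor q_1x\rfloor$; then $qx-p=\{q_2x\}-\{q_1x\}$, so $|qx-p|\le\frac{1}{N+1}$. If instead one of the two points is the endpoint $1$ and the other is $\{q_1x\}$ with $1\le q_1\le N$ (the points $0$ and $1$ cannot share a part because $N\ge 1$), then $1-\{q_1x\}\le\frac{1}{N+1}$, so taking $q=q_1$ and $p=\lfloor q_1x\rfloor+1$ gives $qx-p=\{q_1x\}-1$ and again $|qx-p|\le\frac{1}{N+1}$. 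In every case $1\le q\le N$ and $|qx-p|\le\frac{1}{N+1}$, which proves the claim.

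Finally I would deduce the theorem by the right choice of $N$. Given $t>1$, put $N=\lceil t\rceil-1$, which is a positive integer precisely because $t>1$. The claim produces integers $p,q$ with $1\le q\le N$ and $|qx-p|\le\frac{1}{N+1}$. Since $N+1=\lceil t\rceil\ge t$ we obtain $|qx-p|\le\frac{1}{t}$, and since $q\le N=\lceil t\rceil-1<t$ we obtain $1\le q<t$; together these are exactly \eqref{eqdir}.

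There is essentially no analytic content here, so the only real obstacle is bookkeeping at the boundary. The crude version (using $N$ parts of length $\frac1N$ and only the $N+1$ points $\{jx\}$) gives at best $q\le\lceil t\rceil$ and $|qx-p|<\frac1N$, and neither of these matches the required $q<t$, $|qx-p|\le\frac1t$ for \emph{every} real $t>1$; it is the refinement to $N+2$ points (adjoining the endpoint $1$) with parts of length $\frac{1}{N+1}$, combined with the choice $N=\lceil t\rceil-1$, that makes both inequalities come out correctly, including the degenerate cases such as $t$ an integer or $q$ attaining an extreme value.
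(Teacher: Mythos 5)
Your argument is correct: the refined pigeonhole claim (with the $N+2$ points $0,\{x\},\dots,\{Nx\},1$ and $N+1$ parts of length $\tfrac{1}{N+1}$) is valid, the two cases are handled properly, and the choice $N=\lceil t\rceil-1$ does yield both $|qx-p|\le \tfrac1t$ and $1\le q<t$ for every real $t>1$. The paper states Theorem \ref{Dir} without proof, remarking only that it follows from the pigeon-hole principle, and your proof is exactly that standard argument, carried out with the care needed at the boundary cases.
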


The above result is uniform in the sense that it ensures non-trivial integer solution for all $t.$ It can be easily seen that the rate of approximation given in \eqref{eqdir} improves the
trivial rate of $1/2.$ A natural question that arises here is, what happens if the
right hand side of \eqref{eqdir} is replaced with a faster decreasing
function depending upon $t$? To this end, let $\psi :[t_{0},\infty
)\rightarrow \mathbb{R}^{+}$ be any monotonically decreasing function, where 
$t_{0}\geq 1$ is fixed. Denote by $D(\psi )$ the set all those real numbers $
x$ for which the system 
        \begin{equation*}
|qx-p|\leq \psi (t)\ \ and\ \ |q|<t
\end{equation*}
guarantees a nontrivial integer solution for all large enough $t$. A real number $
x\in D(\psi )$ (resp. $x\in D(\psi )^{c}$) will be referred to as an \emph{$
\psi$-Dirichlet improvable} (resp. \emph{$\psi$-Dirichlet non-improvable})
number.

Davenport-Schmidt \cite{DaSc70} proved that the set $D(k/t )$ has a Lebesgue measure zero for any $k<1$ by showing that $D(k/t )$ is a subset of the union of the set of rationals $\Q$ and the set of badly approximable numbers.The set $D(\psi)$ is connected with the continued fractions as observed by Kleinbock-Wadleigh {\protect\cite[Lemma 2.2]{KlWad16}} proving that an irrational number is $\psi$-Dirichlet improvable if and only if the product of consecutive partial quotients of the continued fraction expansion of that number do not grow fast. To state their result as well as our main
result, first we introduce some necessary definitions and notations.

Every irrational $x\in [ 0,1)$ can be uniquely expressed as a simple
infinite continued fraction expansion of the form: 
\begin{equation*}
x=[a_{1}(x),a_{2}(x),\ldots ,],
\end{equation*}
where $a_{n}(x)\in \mathbb{N},$ $n\geq 1$ are known as the partial quotients
of $x.$ This expansion can be induced by the Gauss map $T:[0,1)\rightarrow [
0,1)$ defined as 
\begin{equation}
T(0):=0,\quad T(x):=\frac{1}{x} \mathrm{(mod}1),\quad \mathrm{for}\ x\in
(0,1), \label{GaussMap}
\end{equation}
with $a_{1}(x)=\lfloor \frac{1}{x}\rfloor $, where $\lfloor .\rfloor $
represents the floor function and $a_{n}(x)~=~a_{1}(T^{n-1}(x))$ for $n\geq 2$.

The metrical theory of continued fractions which focuses on investigating the properties of partial quotients for almost all $x \in [0,1)$ is one of the important areas of research in the study of continued fractions and is closely connected with the Diophantine approximation. The main connection is that the convergents of a real number $x$ are good rational approximates for $x.$ In fact, for any $\tau>0$ the famous Jarn\'{i}k-Besicovitch set 
\begin{equation*}
\left\{ x\in \lbrack 0,1):  \left|x-\frac pq\right|  <\frac{1}{q^{\tau+2}}    \ \ 
\mathrm{for\ infinitely\ many\ }(p,q)\in \mathbb{Z} \times \mathbb{N}\right\},
\end{equation*}
is equivalent to the following set, 
\begin{equation}\label{Jset}
\left\{ x\in [ 0,1):a_{n}(x)\geq q^{\tau}_{n}(x)\ \ 
\mathrm{for\ infinitely\ many\ }n\in \mathbb{N}\right\}.
\end{equation}
For further details about this connection we refer to \cite{Go41}.
Thus a real number $x$ is $\tau$-approximable if the partial quotients in its continued fraction expansion are growing fast. Therefore the growth rate of the partial quotients reveals how well a real
number can be approximated by rationals. 

A starting point in the metrical theory of continued fractions is the well-known 
 Borel-Bernstein theorem \cite{Be_12, Bo_12} which gives an analogue of
Borel-Cantelli `0-1' law with respect to Lebesgue measure for the set of
real numbers with large partial quotients. A lot of work has been done in the direction of improving Borel-Bernstein's theorem, for example, estimation of Hausdorff dimension of sets when  partial quotients $a_{n}(x)$ obeys different conditions has been studied in  \cite{FeWuLiTs97, Go41, Lu97}.

Throughout this paper,  let $\Phi :\mathbb{N}\rightarrow (1,\infty)$ be an arbitrary 
 function such that $\lim_{n\to \infty} \Phi(n)=~\infty,$ 
\begin{equation*}
\mathcal{E}_{1}(\Phi ):=\left\{ x\in [ 0,1):a_{n}(x)\geq \Phi (n)\ \ 
\mathrm{for\ infinitely\ many\ }n\in \mathbb{N}\right\}.
\end{equation*}

\begin{thm}[{\protect\cite[Borel-Bernstein]{Bo_12}}]
\label{Bor} The Lebesgue measure of $\mathcal{E}_{1}(\Phi )$ is is either zero or full according as the series ${\sum_{n=1}^{\infty }}1/\Phi (n)$ converges or diverges respectively.\end{thm}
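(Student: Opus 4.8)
The plan is to realise $\mathcal{E}_{1}(\Phi)$ as the $\limsup$ set $\limsup_{n\to\infty}A_{n}$, where $A_{n}:=\{x\in[0,1):a_{n}(x)\ge\Phi(n)\}$, and to separate the two cases according to the behaviour of $\sum_{n}1/\Phi(n)$. The engine for both directions is a \emph{quasi-independence} estimate at the level of basic cylinders. Recall that for a cylinder $I_{n-1}=I_{n-1}(a_{1},\dots,a_{n-1})$ the sub-cylinders $I_{n}(a_{1},\dots,a_{n-1},k)$, $k\ge1$, partition it (up to a null set), and $|I_{n}(a_{1},\dots,a_{n-1},k)|=\big(q_{n}^{(k)}(q_{n}^{(k)}+q_{n-1})\big)^{-1}$ with $q_{n}^{(k)}=kq_{n-1}+q_{n-2}$; since $q_{n-2}\le q_{n-1}$ this gives $|I_{n}(a_{1},\dots,a_{n-1},k)|\asymp (kq_{n-1})^{-2}$, and summing over $k\ge m$ together with the elementary bounds $(2q_{n-1}^{2})^{-1}\le|I_{n-1}|\le q_{n-1}^{-2}$ yields absolute constants $0<c_{1}\le c_{2}$ with
\[
\frac{c_{1}}{m}\,|I_{n-1}|\ \le\ \sum_{k\ge m}|I_{n}(a_{1},\dots,a_{n-1},k)|\ \le\ \frac{c_{2}}{m}\,|I_{n-1}|,
\]
uniformly in $n$, in $a_{1},\dots,a_{n-1}$ and in $m\ge1$. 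Since $\Phi(n)>1$, rounding $\Phi(n)$ up to an integer only changes constants, so summing the right-hand inequality over all level-$(n-1)$ cylinders gives $|A_{n}|\asymp 1/\Phi(n)$, while the left-hand inequality gives, inside every $I_{n-1}$, a lower bound $c_{1}/(2\Phi(n))$ for the relative measure of the part where $a_{n}\ge\Phi(n)$.

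In the convergent case, $\sum_{n}1/\Phi(n)<\infty$ forces $\sum_{n}|A_{n}|<\infty$, and the easy half of the Borel--Cantelli lemma immediately gives $|\mathcal{E}_{1}(\Phi)|=|\limsup_{n}A_{n}|=0$.

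In the divergent case I would show the complement is null. Write $\mathcal{E}_{1}(\Phi)^{c}=\bigcup_{N\ge1}E_{N}$ with $E_{N}:=\{x: a_{n}(x)<\Phi(n)\text{ for all }n\ge N\}$; the sets $E_{N}$ increase, and since $\Phi(n)\to\infty$ it suffices to treat $N$ large enough that $\Phi(n)>c_{1}$ for all $n\ge N$. For $M\ge N$ set $m_{M}:=|\{x: a_{n}(x)<\Phi(n)\text{ for }N\le n\le M\}|$. Applying the lower cylinder estimate inside each admissible level-$(M-1)$ cylinder, the sub-portion on which $a_{M}<\Phi(M)$ occupies at most a fraction $1-c_{1}/(2\Phi(M))$ of it; summing over admissible cylinders yields the recursion $m_{M}\le\big(1-c_{1}/(2\Phi(M))\big)m_{M-1}$, whence $m_{M}\le\prod_{n=N}^{M}\big(1-\tfrac{c_{1}}{2\Phi(n)}\big)\le\exp\!\big(-\tfrac{c_{1}}{2}\sum_{n=N}^{M}\tfrac1{\Phi(n)}\big)$. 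Letting $M\to\infty$ and using $\sum_{n}1/\Phi(n)=\infty$ gives $m_{M}\to0$, so $|E_{N}|\le\inf_{M}m_{M}=0$; hence $|\mathcal{E}_{1}(\Phi)^{c}|=\lim_{N}|E_{N}|=0$ and $|\mathcal{E}_{1}(\Phi)|=1$.

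The only genuinely technical point is the uniform two-sided cylinder estimate displayed above, which rests entirely on the recursion $q_{n}=a_{n}q_{n-1}+q_{n-2}$ and the standard bounds on cylinder lengths; the remainder is a routine Borel--Cantelli argument in the convergent case and a telescoping-product argument in the divergent case. Alternatively, one could observe that $\mathcal{E}_{1}(\Phi)$ is a tail event for the partial-quotient sequence and invoke a $0$--$1$ law for the Gauss system, reducing matters to $|A_{n}|\asymp1/\Phi(n)$; but the argument above is self-contained.
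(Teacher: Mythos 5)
Your proposal is correct. Note that the paper itself offers no proof of this statement: it is quoted as the classical Borel--Bernstein theorem with a citation, so there is no internal argument to compare against. What you have written is essentially the standard textbook proof (as in Khinchin's book and in the original Borel/Bernstein treatment): the uniform two-sided estimate $\big|\{x\in I_{n-1}: a_n(x)\ge m\}\big|\asymp |I_{n-1}|/m$, which follows from $q_n=a_nq_{n-1}+q_{n-2}$ and the telescoping of cylinder lengths, then the easy Borel--Cantelli half in the convergent case, and in the divergent case the conditional-measure recursion $m_M\le\big(1-\tfrac{c_1}{2\Phi(M)}\big)m_{M-1}$ giving $m_M\le\exp\big(-\tfrac{c_1}{2}\sum_{n=N}^{M}1/\Phi(n)\big)\to0$. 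The points needing care are all handled: the integer rounding of $\Phi(n)$ (harmless since $\Phi>1$), the uniformity of the constants in the cylinder estimate, and the fact that the constraint set $\{a_n(x)<\Phi(n),\ N\le n\le M-1\}$ is a union of full order-$(M-1)$ cylinders, so the fibrewise lower bound can legitimately be summed to yield the recursion; your remark that one could instead invoke a zero--one law for the Gauss system is also a valid alternative for the divergence half.
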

The Borel-Bernstein's theorem is a remarkably simple dichotomy result  but it fails to distinguish between Lebesgue null sets, that is, it gives Lebesgue measure zero for sets $\mathcal{E}_{1}(\Phi )$ for rapidly increasing functions $\Phi$.  To distinguish between Lebesgue null sets the notion of Hausdorff measure and dimension are the appropriate tools and has gained much importance in the
metrical theory of continued fractions. Keeping this in view Wang-Wu \cite
{WaWu08} completely determined the Hausdorff dimension of the set $\mathcal{E
}_{1}(\Phi ).$

\begin{thm}[{\protect\cite[Wang-Wu]{WaWu08}}]
\label{WaWu}

Let $\Phi :\mathbb{N}\rightarrow \mathbb R^+$ be an arbitrary positive
function. Suppose $$\log B=\liminf\limits_{n\rightarrow \infty }\frac{\log
\Phi (n)}{n} \ { and}\ \log b=\liminf\limits_{n\rightarrow \infty }\frac{\log
\log \Phi (n)}{n}.$$ 

\begin{itemize}
\item [\rm(i)] When $B=1$, $\dim_{\mathrm{H}} \mathcal{E}_{1}(\Phi)=1.$

\item[\rm{(ii)}] When $B=\infty$, $\dim_{\mathrm{H}} \mathcal{E}
_{1}(\Phi)=1/(1+b).$

\item[\rm{(iii)}] When $1<B<\infty$, $\dim_{\mathrm{H}} \mathcal{E}
_{1}(\Phi)=s_{B}=\inf \{s\geq 0 :\mathsf{P}(T, -s(\log B+\log |T^{\prime
}|))\le 0\},$ 
\end{itemize}
where $T$ is the Gauss map related to the continued fraction expansion, $
T^{\prime }$ denotes the derivative of $T$and $\mathsf{P}$ represents the pressure function defined in Section \ref{Pressure Functions}.
\end{thm}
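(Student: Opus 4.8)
The plan is to treat the three regimes $B=1$, $1<B<\infty$ and $B=\infty$ separately, using throughout the geometry of the cylinders $I_n(a_1,\dots,a_n)=\{x:a_i(x)=a_i,\ 1\le i\le n\}$: one has $|I_n|\asymp q_n^{-2}$ with $\prod_{i=1}^{n}a_i\le q_n\le 2^{n}\prod_{i=1}^{n}a_i$, and for a threshold $t$ the union $J_{n-1}:=\bigcup_{a_n\ge t}I_n(a_1,\dots,a_{n-1},a_n)$ is an interval of length $\asymp|I_{n-1}|/t$ abutting an endpoint of $I_{n-1}$. Upper bounds will come from the natural cover $\mathcal E_1(\Phi)\subseteq\bigcup_{n\ge N}A_n$, $A_n:=\{x:a_n(x)\ge\Phi(n)\}$, suitably refined; lower bounds from a Cantor subset $\mathsf E\subseteq\mathcal E_1(\Phi)$ carrying a Moran measure $\nu$, for which I will check that the $\liminf$ of the local exponent $\log\nu(B(x,r))/\log r$ is attained precisely at the ``union'' scales $r\asymp|J_{n-1}|$ and equals the claimed dimension, whence the mass distribution principle applies. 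I shall use freely the standard properties of the pressure function $s\mapsto\mathsf P(T,-s\log|T'|)$ of the Gauss map (finiteness, strict monotonicity, convexity and continuity for $s>1/2$; the rule $\mathsf P(T,c+\varphi)=c+\mathsf P(T,\varphi)$; the variational principle; the Gibbs property of equilibrium states; and $-\tfrac{d}{ds}\mathsf P(T,-s\log|T'|)=\lambda(s)$, the Lyapunov exponent of the equilibrium state), so that $s_B$ is the unique zero of $s\mapsto\mathsf P(T,-s(\log B+\log|T'|))$, $B\mapsto s_B$ is continuous, and $s_B\in(1/2,1)$ for $1<B<\infty$.

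\emph{Upper bounds.} In case (iii) the plain cover suffices. Since $\Phi(n)\ge B^{(1-\varepsilon)n}$ for all large $n$, covering each $A_n$ by the intervals $J_{n-1}(a_1,\dots,a_{n-1})$ of length $\asymp q_{n-1}^{-2}\Phi(n)^{-1}$ gives, for $s>1/2$,
\[
\sum_{n\ge N}\ \sum_{a_1,\dots,a_{n-1}}\bigl(q_{n-1}^{-2}\Phi(n)^{-1}\bigr)^{s}
\ \asymp\ \sum_{n\ge N}B^{-(1-\varepsilon)ns}\sum_{a_1,\dots,a_{n-1}}|I_{n-1}|^{s}
\ \asymp\ \sum_{n\ge N}e^{\,n\,\mathsf P\bigl(T,\,-s(\log B^{1-\varepsilon}+\log|T'|)\bigr)},
\]
the constant $-s\log B^{1-\varepsilon}$ merely shifting the pressure; for $s>s_{B^{1-\varepsilon}}$ the exponent is negative, so $\mathcal H^{s}(\mathcal E_1(\Phi))=0$, and $\varepsilon\to0$ with continuity of $B\mapsto s_B$ yields $\hdim\mathcal E_1(\Phi)\le s_B$. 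The same cover gives $\le1$ in case (i) and $\le1/2$ in case (ii) when $b=1$ (there $\Phi$ is super-exponential, so $\Phi(n)^{-s}$ already beats the factor $\asymp\zeta(2s)^{n}$ for every $s>1/2$). When $b>1$ this cover only yields $\le1/2$, and the decisive refinement is to cover $A_n\cap\{a_k<\Phi(k):N\le k<n\}$ rather than $A_n$ — i.e.\ to record the first $n\ge N$ at which the inequality holds — so that the inner sum becomes $\asymp\prod_{k=N}^{n-1}\Phi(k)^{1-2s}$; with $\Phi(k)\ge\exp((b-\varepsilon)^{k})$ this is $\exp\!\bigl((1-2s)\tfrac{(b-\varepsilon)^{n}}{b-\varepsilon-1}\bigr)$, and balancing against $\Phi(n)^{-s}\le\exp(-s(b-\varepsilon)^{n})$ makes the $s$-sum of the cover finite precisely for $s>\tfrac1{1+b-\varepsilon}$; letting $\varepsilon\to0$ gives $\hdim\mathcal E_1(\Phi)\le\tfrac1{1+b}$.

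\emph{Lower bounds.} As $B$ (resp.\ $b$) is a \emph{liminf}, there is a subsequence $m_1<m_2<\cdots$, which I may sparsify at will, along which $\Phi(m_k)\le B^{(1+\varepsilon)m_k}$ in cases (i)/(iii) and $\Phi(m_k)\le\exp((be^{\varepsilon})^{m_k})$ in case (ii). Since membership in $\mathcal E_1(\Phi)$ needs $a_n\ge\Phi(n)$ only along \emph{some} subsequence, I build $\mathsf E$ by forcing, at each level $m_k$, the partial quotient into a window $[R_k,2R_k]$ with $R_k$ just above $\Phi(m_k)$ (so $\mathsf E\subseteq\mathcal E_1(\Phi)$) and, between consecutive forced levels, requiring the orbit to be typical for a prescribed invariant measure of entropy $h$ and Lyapunov exponent $\lambda$; $\nu$ is the associated Moran measure. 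Once $\{m_k\}$ is sparse enough, the earlier forced levels and the Moran renormalisations contribute $o(m_k)$, and the $\liminf$ of the local exponent is attained at $r\asymp|J_{m_k}|$ with value $\dfrac{-\log\nu(I_{m_k})}{-\log|J_{m_k}|}\to\dfrac{h}{\lambda+\gamma}$, where $\gamma:=\lim_k\tfrac1{m_k}\log R_k$ is the forced growth rate. In case (i) one uses $\{a_n\le M\}$ in the blocks, so $\gamma=0$ and the value is $h/\lambda=s_M\to1$, giving $\hdim\mathcal E_1(\Phi)=1$. In case (ii) one forces \emph{every} level into $[\exp(\beta^n),2\exp(\beta^n)]$, $\beta=be^{\varepsilon}$ (the hypothesis then places this set inside $\mathcal E_1(\Phi)$), a set of dimension $\tfrac1{1+\beta}$ by the classical estimates of \cite{Lu97,FeWuLiTs97}; $\varepsilon\to0$ gives $\hdim\mathcal E_1(\Phi)\ge\tfrac1{1+b}$. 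In case (iii) one uses in the blocks the equilibrium state $\mu^*_s$ of $-s\log|T'|$, with $\gamma=(1+\varepsilon)\log B$, and \emph{optimises over $s$}: by convexity of $\mathsf P(T,-s\log|T'|)$, the ratio $h(s)/(\lambda(s)+\gamma)$ is maximised at the $s$ with $\mathsf P(T,-s\log|T'|)=s\gamma$, i.e.\ at $s=s_{B^{1+\varepsilon}}$, where it equals $s_{B^{1+\varepsilon}}$; $\varepsilon\to0$ gives $\hdim\mathcal E_1(\Phi)\ge s_B$.

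I expect the real work to lie in two places. The first is the lower bound in case (iii): one must push the thermodynamic formalism of the countably branched, non-compact Gauss system far enough to have the Gibbs property of $\mu^*_s$ on all cylinders and the differentiability of $s\mapsto\mathsf P(T,-s\log|T'|)$, and then carry out the local-dimension computation so that the optimisation really reproduces $\mathsf P(T,-s(\log B+\log|T'|))=0$ — keeping every error term $o(m_k)$ uniformly along the sparse subsequence and checking the intermediate scales, the crucial point being that the local exponent never drops below $s_B$, touching it only at the scales $|J_{m_k}|$. The second is the ``first-hitting'' refinement of the cover in case (ii) with $b>1$, which is precisely what pushes the upper bound below $1/2$. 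The cylinder estimates, the mass distribution principle, and the limits $\varepsilon\to0$ and $M\to\infty$ are then routine.
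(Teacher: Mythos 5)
You should first be aware that the paper does not prove Theorem \ref{WaWu} at all: it is quoted from Wang--Wu \cite{WaWu08}, so there is no in-paper proof to compare with; the closest analogue is the proof of Theorem \ref{indexddd}, which runs on the same machinery you propose (reduction to $\Phi(n)=B^{n}$, a cylinder cover recognising the pressure for the upper bound, a Moran subset with sparse forced digits plus the mass distribution principle for the lower bound, and {\L}uczak's lemma when $B=\infty$). Within that framework your cases (i), (iii) and the lower bound in (ii) are essentially sound: in particular your optimisation identity, that $h(s)/(\lambda(s)+\gamma)$ is maximised where $\mathsf{P}(T,-s\log|T'|)=s\gamma$ and there equals $s$, correctly explains why the zero of $\mathsf{P}(T,-s(\log B+\log|T'|))$ appears. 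The one technical caveat there is that ``conditioning on orbits typical for an equilibrium state'' does not give uniform estimates at all points and scales; Wang--Wu (and this paper) avoid this by working in the finite alphabet $\{1,\dots,M\}$ with the explicit measure built from the level-$L$ pressure equation, which you could adopt without changing your computation.

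The genuine gap is the upper bound in case (ii) when $b>1$, i.e.\ exactly where the dimension must drop below $1/2$. Your first-hitting cover of $A_n\cap\{a_k<\Phi(k):N\le k<n\}$ by the intervals $J_{n-1}(a_1,\dots,a_{n-1})$ has $s$-volume containing the factor $\sum_{a_1,\dots,a_{N-1}\in\N^{N-1}}q_{N-1}^{-2s}$ from the unconstrained digits below level $N$, and this factor is infinite for every $s\le 1/2$ (already $\sum_{a}a^{-2s}=\infty$); your asymptotic $\asymp\prod_{k=N}^{n-1}\Phi(k)^{1-2s}$ silently discards it, so the claimed finiteness of the $s$-sum for $s>1/(1+b-\varepsilon)$ is not established. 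The obvious repairs do not work: fixing the initial block via countable stability of dimension leaves you with a single cover whose mesh is bounded below (within a fixed cylinder of order $N-1$ the piece with $n=N$ has a fixed diameter $\asymp q_{N-1}^{-2}\Phi(N)^{-1}$), so you only bound Hausdorff content, not dimension; while letting the hitting threshold tend to infinity relative to a fixed initial cylinder, or recording the first $j$ hitting times, reintroduces either unconstrained digits or sums $\sum_{a\ge\Phi(k)}a^{-2s}$ over the earlier hit digits, which again diverge for $s<1/2$. Pushing the upper bound below $1/2$ is precisely the content of {\L}uczak's theorem (Lemma \ref{Lu}), whose proof needs a genuinely different idea than a straight cylinder count, and both Wang--Wu and the present paper invoke it as a black box; your sketch identifies this as a place of ``real work'' but the mechanism you offer for it fails as stated.
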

The following result illustrates the continuity of dimensional number $s_{B}$ and shows that its limit exist.

\begin{pro} [{\protect\cite[Wang-Wu]{WaWu08}}]      \label{proWW}
\label{p1} The parameter $s_B$ is continuous with respect to $B,$ and
\begin{equation*}
\lim_{B\to 1} s_B=1, \lim_{B\to \infty}s_B=1/2.
\end{equation*}
\end{pro}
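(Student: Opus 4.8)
The plan is to read off everything from the characterisation of $s_B$ as the unique zero of the map $s\mapsto \mathsf{P}(T,-s(\log B+\log|T'|))$ together with the elementary behaviour of that map in $s$. First I would write $P_B(s):=\mathsf{P}(T,-s(\log B+\log|T'|))$ and use the translation property of the pressure ($\mathsf{P}(T,\phi+c)=\mathsf{P}(T,\phi)+c$ for a constant $c$) to pull the constant $-s\log B$ out, giving $P_B(s)=P_1(s)-s\log B$, where $P_1(s)=\mathsf{P}(T,-s\log|T'|)$ is the classical Gauss-map pressure discussed in Section~\ref{Pressure Functions}. Granting the structural facts about $P_1$ — it is finite, convex, continuous and strictly decreasing on $(1/2,\infty)$ (strictly, because $\log|T'|>0$ on $(0,1)$), $P_1(s)\to+\infty$ as $s\downarrow 1/2$, $P_1(s)\to-\infty$ as $s\to\infty$, and $P_1(1)=0$ — it follows that for each fixed $B>1$ the function $P_B$ is again strictly decreasing from $+\infty$ to $-\infty$ on $(1/2,\infty)$ and identically $+\infty$ on $[0,1/2]$. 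Hence $P_B$ has a unique zero, which is exactly $s_B$, so $P_B(s_B)=0$, i.e. $P_1(s_B)=s_B\log B$; since $P_B(1)=-\log B<0$ we get $s_B\in(1/2,1)$; and since $P_B(s)$ is strictly decreasing in $B$ for fixed $s$, the map $B\mapsto s_B$ is weakly decreasing.

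For continuity in $B$, fix $B_0>1$ and $\varepsilon>0$ small enough that $s_{B_0}-\varepsilon>1/2$. By strict monotonicity of $P_{B_0}$ in $s$ we have $P_{B_0}(s_{B_0}-\varepsilon)>0>P_{B_0}(s_{B_0}+\varepsilon)$. Because $P_B(s)=P_1(s)-s\log B$ depends continuously (indeed real-analytically, being linear in $\log B$) on $B$ for each fixed $s\in(1/2,\infty)$, for all $B$ in a neighbourhood of $B_0$ we still have $P_B(s_{B_0}-\varepsilon)>0>P_B(s_{B_0}+\varepsilon)$, and therefore $s_B\in(s_{B_0}-\varepsilon,s_{B_0}+\varepsilon)$. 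This gives continuity of $B\mapsto s_B$.

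For the two limits I would pass to the limit in the identity $P_1(s_B)=s_B\log B$, using monotonicity to guarantee the limits exist. As $B\to 1^+$, $s_B$ increases to some $L\in(1/2,1]$; the right-hand side $s_B\log B\to 0$, and since $L>1/2$ continuity of $P_1$ gives $P_1(s_B)\to P_1(L)$, so $P_1(L)=0$; as $s=1$ is the only zero of $P_1$, we conclude $L=1$. As $B\to\infty$, $s_B$ decreases to some $L\in[1/2,1)$; since $s_B\ge 1/2$ we have $s_B\log B\ge\tfrac12\log B\to\infty$, hence $P_1(s_B)\to\infty$; but if $L>1/2$ the values $s_B$ eventually lie in a compact subinterval of $(1/2,\infty)$ on which $P_1$ is bounded, a contradiction, so $L=1/2$.

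The only genuinely non-routine ingredients are the structural properties of the infinite-alphabet pressure function $P_1$ — its finiteness precisely on $(1/2,\infty)$, its continuity and strict monotonicity there, the blow-up $P_1(s)\to+\infty$ as $s\downarrow 1/2$, and the normalisation $P_1(1)=0$ — all of which are recorded in Section~\ref{Pressure Functions}. Once these are in hand, the proposition is a short monotonicity-plus-continuity argument, and the only care needed is to check at each limiting step that the relevant value of $s$ stays in the region $(1/2,\infty)$ where $P_1$ is finite and continuous, which is exactly what the bounds $s_B\in(1/2,1)$ provide.
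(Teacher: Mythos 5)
The paper itself offers no proof of this proposition: it is quoted verbatim from Wang--Wu \cite{WaWu08} (just as the analogous statement for $t_B$, Corollary~\ref{p2}, is dispatched with ``similar steps as for $s_B$ in \cite{WaWu08}''). So your argument cannot be matched against an in-paper proof; judged on its own, it is essentially correct, and it takes a different and arguably cleaner route than the source. Wang--Wu reach continuity and the limits of $s_B$ through the finite-alphabet approximations (the analogues of $t_{n,B}(M)$ and $t_B(M)$ in Section~\ref{Pressure Functions}), passing to the full system via the Hanus--Mauldin--Urba\'nski approximation property (Proposition~\ref{l5}); you instead work directly with the infinite-system pressure, using the translation identity $\mathsf{P}(T,\varphi+c)=\mathsf{P}(T,\varphi)+c$ (immediate from \eqref{5}) to write $\mathsf{P}(T,-s(\log B+\log|T'|))=P_1(s)-s\log B$ and then run a monotonicity-plus-intermediate-value argument. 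Your route buys brevity and makes the limits $B\to1$, $B\to\infty$ transparent from the identity $P_1(s_B)=s_B\log B$; the finite-subsystem route buys uniformity statements (convergence of $s_{n,B}(M)$ and $s_B(M)$) that the paper actually needs later for the lower-bound construction, which is why it is the one cited.

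Two small repairs you should make. First, the structural facts you attribute to Section~\ref{Pressure Functions} (finiteness of $P_1$ exactly on $(1/2,\infty)$, blow-up as $s\downarrow 1/2$, $P_1(1)=0$) are not recorded there --- the listed properties (i)--(iii) concern the compact, finitely many branches setting --- so you must supply them; they do follow quickly from the paper's estimates, e.g.\ $\prod_i a_i\le q_n\le\prod_i(a_i+1)\le 2^n\prod_i a_i$ gives $\log\zeta(2s)-2s\log 2\le P_1(s)\le \log\zeta(2s)$ for $s>1/2$, and $\sum_{a_1,\dots,a_n}q_n^{-2}\in[1,2]$ (from \eqref{cyle} and the fact that the cylinders partition $[0,1)$) gives $P_1(1)=0$. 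Second, your parenthetical reason for strict monotonicity of $P_1$, namely ``$\log|T'|>0$ on $(0,1)$'', is too weak, since $\inf_{(0,1)}\log|T'|=0$; strictness (which you genuinely use when you argue that $s=1$ is the \emph{only} zero of $P_1$ in the limit $B\to1$) comes from uniform expansion of iterates, $|(T^n)'(x)|\ge q_n^2\ge 2^{\,n-1}$ by \eqref{deriva} and $(\mathrm{P}_1)$ of Proposition~\ref{pp3}, which yields $P_1(s')\le P_1(s)-(s'-s)\log 2$ for $s'>s$. With these two points filled in, the argument is complete.
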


The set $\mathcal{E}_{1}(\Phi)$ is connected with the Jarn{\`i}k-Besicovitch set \eqref{Jset} in the sense that in \eqref{Jset} the approximating function depends on the $n$th convergent of $x$ $``q_{n}(x)"$ whereas in $\mathcal{E}_{1}(\Phi)$ the approximating function $\Phi$ is a function of index $``n".$ Recall that $\mathcal{E}_{1}(\Phi)$ consist of real numbers such that
one partial quotient grows very fast but as we move towards the product of
two consecutive partial quotients, the corresponding set of real numbers is
linked with the set of Dirichlet non-improvable numbers (as observed by
Kleinbock-Wadleigh \cite{KlWad16}) in the following sense.

There exists $w>1$ such that for every $x \not \in \mathbb{Q},$ $w^{n}\leq
q_{n}(x)$ for all $n\geq2$. There also exits $W>w$
such that for almost every $x,$ $q_{n}(x)\leq W^{n}$ for all large enough $n$
(see \cite[\S 14]{Khi_64}).

This leads to the following criteria for Dirichlet improvability.

\begin{lem}[{\protect\cite[Kleinbock-Wadleigh]{KlWad16}}]
\label{KW} Let $x\in [0, 1)\smallsetminus\mathbb{Q}$, and let $\psi:[t_0,
\infty)\to\mathbb{R}^+$ be non-increasing function with $t\psi(t)<1$ for all 
$t\geq t_0$ and $\Phi (t)=\frac{t\psi(t)}{1-t\psi (t)}$. Then

\begin{itemize}
\item[\rm{(i)}] $x$ $\in$ $D(\psi)$ 
if $a_{n+1}(x)a_n(x)\, \le\,\Phi(w^n)/4$ for all sufficiently large $n$.

\item[\rm{(ii)}] $x$ $\in$ $D(\psi)^c$ if $a_{n+1}(x)a_n(x)\, >\,
\Phi(W^n)$ for infinitely many~$n$.
\end{itemize}
\end{lem}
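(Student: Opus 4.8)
The plan is to translate membership in $D(\psi)$ into a condition on the continued fraction convergents $p_n/q_n$ of $x$, and then to extract from the classical metrical identities the stated inequalities on the products $a_{n+1}(x)a_n(x)$. The first step rests on the best approximation property of convergents: if $q_m$ denotes the largest convergent denominator strictly below a level $t$, then $\min\{\|qx\|:1\le q<t\}=\|q_mx\|$, where $\|\cdot\|$ denotes distance to the nearest integer. Hence for a fixed $t$ the system $|qx-p|\le\psi(t)$, $|q|<t$ has a nontrivial integer solution exactly when $\|q_mx\|\le\psi(t)$. Because $\psi$ is non-increasing, as $t$ ranges over an interval $(q_{n-1},q_n]$ the most restrictive instance is $t=q_n$, and therefore $x\in D(\psi)$ if and only if $\|q_{n-1}x\|\le\psi(q_n)$ for all sufficiently large $n$; in particular $x\in D(\psi)^c$ as soon as $\|q_{n-1}x\|>\psi(q_n)$ for infinitely many $n$.

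The second step makes the inequality $\|q_{n-1}x\|\le\psi(q_n)$ explicit. Using $\|q_{n-1}x\|=|q_{n-1}x-p_{n-1}|=(q_n+q_{n-1}T^nx)^{-1}$ together with $\psi(t)=\frac1t\cdot\frac{\Phi(t)}{1+\Phi(t)}$ (a rewriting of $\Phi(t)=\frac{t\psi(t)}{1-t\psi(t)}$), a one-line rearrangement yields
\[
\|q_{n-1}x\|\le\psi(q_n)\qquad\Longleftrightarrow\qquad \frac{q_n}{q_{n-1}\,T^nx}\le\Phi(q_n).
\]
Now the recurrence $q_n=a_nq_{n-1}+q_{n-2}$ gives $a_n\le q_n/q_{n-1}<a_n+1$, while $T^nx=[0;a_{n+1},a_{n+2},\ldots]$ gives $a_{n+1}\le 1/T^nx<a_{n+1}+1$; multiplying, the quantity $q_n/(q_{n-1}T^nx)$ lies strictly between $a_na_{n+1}$ and $(a_n+1)(a_{n+1}+1)$, and since every partial quotient is at least $1$ we have $(a_n+1)(a_{n+1}+1)\le 4a_na_{n+1}$. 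I would then invoke the denominator growth recalled above, $w^n\le q_n$ for every irrational $x$ and $q_n\le W^n$ for all large $n$, together with the monotonicity of $\Phi$ in its argument (which holds in the regime of interest), so that $\Phi(w^n)\le\Phi(q_n)\le\Phi(W^n)$.

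Assembling these gives part (i): if $a_{n+1}a_n\le\Phi(w^n)/4$ for all large $n$, then $(a_n+1)(a_{n+1}+1)\le 4a_na_{n+1}\le\Phi(w^n)\le\Phi(q_n)$, whence $q_n/(q_{n-1}T^nx)<\Phi(q_n)$, i.e.\ $\|q_{n-1}x\|\le\psi(q_n)$ for all large $n$, and by the first step $x\in D(\psi)$. For part (ii), if $a_{n+1}a_n>\Phi(W^n)$ for infinitely many $n$, then for each such $n$ (all but finitely many of which also satisfy $q_n\le W^n$) one gets $q_n/(q_{n-1}T^nx)>a_na_{n+1}>\Phi(W^n)\ge\Phi(q_n)$, so $\|q_{n-1}x\|>\psi(q_n)$; hence at level $t=q_n$ no admissible denominator solves the Dirichlet system, and since this recurs for arbitrarily large $t$ we conclude $x\in D(\psi)^c$.

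The algebraic rearrangement and the two-sided estimate of $q_n/(q_{n-1}T^nx)$ are routine; the step demanding genuine care is the first reduction. One must invoke the best approximation theorem in its precise second-kind form, be careful about whether the constraint $|q|<t$ is strict at the endpoints $t=q_n$, and check that the monotonicity of $\psi$ truly allows the universal quantifier over large $t$ to be replaced by one over large $n$. A related subtlety is that the constant $4$ in (i) is exactly what absorbs the gap between $a_na_{n+1}$ and $(a_n+1)(a_{n+1}+1)$, while the asymmetric appearance of $w^n$ in (i) and $W^n$ in (ii) forces one to use the monotonicity of $\Phi$ in the two opposite directions.
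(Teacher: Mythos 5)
The paper itself gives no proof of this lemma (it is quoted from Kleinbock--Wadleigh), so let me assess your argument on its own terms. Its core is sound and is in fact the argument of the cited source: the reduction, via best approximations of the second kind and the monotonicity of $\psi$, of membership in $D(\psi)$ to the condition $\vert q_{n-1}x-p_{n-1}\vert\le\psi(q_n)$ for all large $n$; the identity $\vert q_{n-1}x-p_{n-1}\vert=1/(q_n+q_{n-1}T^nx)$; the rearrangement to $q_n/(q_{n-1}T^nx)\le\Phi(q_n)$; and the two-sided comparison $a_na_{n+1}\le q_n/(q_{n-1}T^nx)<(a_n+1)(a_{n+1}+1)\le 4a_na_{n+1}$. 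This correctly yields the criterion in terms of $\Phi(q_n(x))$: non-improvable if $a_na_{n+1}>\Phi(q_n)$ infinitely often, improvable if $a_na_{n+1}\le\Phi(q_n)/4$ eventually, which is what Kleinbock--Wadleigh actually prove.

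The gap is in the very last substitution, which you wave through with ``monotonicity of $\Phi$ in its argument (which holds in the regime of interest)''. The hypotheses do not give this: $\Phi(t)=t\psi(t)/(1-t\psi(t))$ is increasing in $t\psi(t)$, but $\psi$ non-increasing does not make $t\mapsto t\psi(t)$ monotone (for $\psi(t)=t^{-3}$ it is decreasing, and one can make it oscillate), so the chain $\Phi(w^n)\le\Phi(q_n)\le\Phi(W^n)$ is unjustified as stated, and it is precisely the step that converts the $\Phi(q_n)$-criterion into the $w^n$/$W^n$ form of the lemma; you need either to assume $t\psi(t)$ (equivalently $\Phi$) non-decreasing or to keep the statement in terms of $q_n(x)$. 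Separately, in part (ii) you use $q_n\le W^n$ for all large $n$, but this holds only for almost every $x$ (it is the L\'evy-type bound quoted just before the lemma), not for the arbitrary irrational $x$ fixed in the statement; your parenthetical ``all but finitely many of which also satisfy $q_n\le W^n$'' silently assumes $x$ lies in that full-measure set. Both caveats should be made explicit, since as written they are the only places where the proof does not actually follow from the stated hypotheses.
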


Thus this lemma characterizes a real number $x$ to be $\psi$-Dirichlet
non-improvable in terms of the growth of product of consecutive partial
quotients. Further, Kleinbock-Wadleigh also proved a zero-one law for the Lebesgue measure of $D(\psi).$ With a change of notation, we consider the set
 \begin{align*}\label{KKWW}
\mathcal{E}_{2}(\Phi):=\left\{x\in[0, 1): a_n(x)a_{n+1}(x)\geq \Phi(n) \ \ 
\mathrm{for \ infinitely \ many \ }n\in\mathbb{N}\right\},
\end{align*}
where $\Phi:\mathbb{N}\to (1,\infty)$ is any function with $\lim_{n\to \infty} \Phi(n)=\infty.$

\begin{thm}[{\cite[Kleinbock-Wadleigh]{KlWad16}}]
The Lebesgue measure of $\mathcal{E}_{2}(\Phi )$ is is either zero or full according as the series $\sum_{n=1}^{\infty }\frac{\log{\Phi(n)}}{\Phi (n)}$ converges or diverges respectively.
\end{thm}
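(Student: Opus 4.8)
The plan is to realise $\mathcal{E}_2(\Phi)$ as a $\limsup$ set and run a Borel--Cantelli argument, upgrading the divergence half to full measure by a zero--one law. Put $E_n=\{x\in[0,1):a_n(x)a_{n+1}(x)\ge \Phi(n)\}$, so that $\mathcal{E}_2(\Phi)=\limsup_{n\to\infty}E_n$. The decisive quantitative input is the estimate
\[
\mu(E_n)\asymp \frac{\log\Phi(n)}{\Phi(n)}\qquad\text{(for $\Phi(n)\ge 2$),}
\]
where $\mu$ denotes the Gauss measure, comparable to Lebesgue measure with universal constants. Since $a_n(x)=a_1(T^{n-1}x)$ and $a_{n+1}(x)=a_2(T^{n-1}x)$, we have $E_n=T^{-(n-1)}\{x:a_1(x)a_2(x)\ge\Phi(n)\}$, so $T$-invariance of $\mu$ reduces the estimate to $\mu(\{x:a_1(x)a_2(x)\ge t\})$ with $t=\Phi(n)$. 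This set is the disjoint union of the rank-two cylinders $I_2(k,l)$ over all pairs with $kl\ge t$, and the elementary bound $|I_2(k,l)|=\bigl((kl+1)(kl+k+1)\bigr)^{-1}\asymp (kl)^{-2}$ turns the whole thing into evaluating $\sum_{kl\ge t}(kl)^{-2}$; summing first over $l$ and then over $k$ shows this double sum is $\asymp t^{-1}\log t$, which gives the stated order.

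With this in hand the convergence half is immediate: if $\sum_n\log\Phi(n)/\Phi(n)<\infty$ then $\sum_n\mu(E_n)<\infty$, and the convergence Borel--Cantelli lemma forces $\mu(\mathcal{E}_2(\Phi))=0$, hence Lebesgue measure zero.

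For the divergence half I would prove quasi-independence of the $E_n$. Each $E_n$ is $\sigma(a_1,\dots,a_{n+1})$-measurable, i.e.\ a union of rank-$(n+1)$ cylinders, while for $m\ge n+2$ one can write $E_m=T^{-(n+1)}\bigl(T^{-(m-n-2)}\{x:a_1(x)a_2(x)\ge\Phi(m)\}\bigr)$; the bounded-distortion (Gibbs/R\'enyi) property of the Gauss map on cylinders of rank $n+1$, together with $T$-invariance of $\mu$, then yields
\[
\mu(E_n\cap E_m)\le C\,\mu(E_n)\,\mu(E_m)\qquad(m\ge n+2)
\]
for a universal constant $C$. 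If $\sum_n\mu(E_n)=\infty$, the sum still diverges over at least one of the residue classes $n$ even or $n$ odd; restricting to that subsequence --- whose distinct indices differ by at least $2$, so the displayed inequality covers every pair --- and inserting it into the Chung--Erd\H{o}s (Kochen--Stone) inequality gives $\mu(\limsup_nE_n)>0$. Finally, $\mathcal{E}_2(\Phi)$ is unchanged if finitely many partial quotients are altered, so it lies in the tail $\sigma$-algebra $\bigcap_{k\ge0}T^{-k}\mathcal{B}$ of the digit process; exactness of the Gauss map makes that $\sigma$-algebra $\mu$-trivial, so $\mu(\mathcal{E}_2(\Phi))\in\{0,1\}$. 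Combined with positivity, the divergent case therefore gives $\mu(\mathcal{E}_2(\Phi))=1$, that is, full Lebesgue measure.

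The main obstacle is the divergence half, and inside it the quasi-independence estimate, since the reduction above genuinely exploits a gap between the two digit-blocks whereas the consecutive events $E_n$ and $E_{n+1}$ share the digit $a_{n+1}$. Splitting into even and odd indices is the inexpensive fix; alternatively one estimates the near-diagonal overlaps $\mu(E_n\cap E_{n+1})$ directly from the cylinder structure and absorbs their contribution into the error term of Chung--Erd\H{o}s. A second, genuine technical point is that the dichotomy is governed by $\log\Phi(n)/\Phi(n)$ and not by $1/\Phi(n)$, so the two-variable sum $\sum_{kl\ge t}(kl)^{-2}$ must be evaluated carefully enough to keep its logarithmic factor rather than being bounded crudely.
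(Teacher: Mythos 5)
This theorem is stated in the paper as a quoted result of Kleinbock--Wadleigh and is not proved there, so there is no in-paper argument to compare against; judged on its own, your proposal is essentially a correct reconstruction of the original proof. The key measure estimate is right: $\{a_1a_2\ge t\}$ is a disjoint union of rank-two cylinders with $|I_2(k,l)|\asymp (kl)^{-2}$, and splitting the sum at $k\le t$ versus $k>t$ gives $\asymp t^{-1}\log t$, which is exactly why the dichotomy is governed by $\log\Phi(n)/\Phi(n)$ rather than $1/\Phi(n)$; $T$-invariance of the Gauss measure then transports this to $E_n$, and the convergence half is plain Borel--Cantelli. For divergence, your quasi-independence bound $\mu(E_n\cap E_m)\le C\mu(E_n)\mu(E_m)$ for $m\ge n+2$ is legitimate: $E_n$ is a union of rank-$(n+1)$ cylinders, $E_m=T^{-(n+1)}T^{-(m-n-2)}\{a_1a_2\ge\Phi(m)\}$, and bounded distortion of $T^{n+1}$ on rank-$(n+1)$ cylinders plus invariance gives the inequality with a universal constant; the even/odd splitting is an adequate way to avoid the shared digit $a_{n+1}$ in consecutive events, and Chung--Erd\H{o}s then yields positive measure along the chosen subsequence, hence for $\mathcal{E}_2(\Phi)$. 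The final zero--one step is also sound: $\limsup_n E_n$ lies in $\bigcap_k T^{-k}\mathcal{B}$, which is trivial by exactness of the Gauss map (Rohlin), and equivalence of Gauss and Lebesgue measures converts the conclusion to full Lebesgue measure. The only points worth writing out carefully in a full version are the uniformity of the distortion constant (it is the standard R\'enyi condition) and the requirement $\Phi(n)\ge 2$ in the asymptotics, which is harmless since $\Phi(n)\to\infty$.
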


Note that the $\mathcal{E}_{1}(\Phi)$ is properly contained in $\mathcal{E}_{2}(\Phi).$ 
Since the inclusion is proper, this raises a natural question of the size of the set  $
\mathcal{E}_{2}(\Phi) \setminus \mathcal{E}_{1}(\Phi)$.
In other words, a natural question is to estimate the size of the set 
\begin{equation*}\label{pqc}
 \mathcal{F}(\Phi ):=~\mathcal{E}_{2}(\Phi )\setminus \mathcal{E}_{1}(\Phi )=\left\{ x\in [
0,1):
\begin{array}{r}
a_{n+1}(x)a_{n}(x)\geq \Phi (n)\text{ for infinitely many }n\in 
\mathbb{N}
\text{ and} \\ 
a_{n+1}(x)<\Phi (n)\text{ for all sufficiently large }n\in 
\mathbb{N}
\end{array}
\right\} ,  
\end{equation*}
in terms of Hausdorff dimension. Note that the set $\mathcal{F}(\Phi )$ arises by excluding the set of well
approximable points (c.f. Jarn\' ik-Besicovitch set \eqref{Jset})  from the set $\mathcal E_{2}(\Phi )$ of Dirichlet non-improvable
points expressed in terms of their continued fraction entries.  We prove that the set  $ \mathcal{F}(\Phi )$ is quite big in a sense that it is uncountable by proving that its Hausdorff dimension is positive.
\begin{thm}
\label{indexddd}Let $\Phi :\mathbb{N}\rightarrow (1,\infty)$ be any function with ${\displaystyle 
\lim_{n\to \infty}} \Phi(n)=\infty.$ Suppose $$\log B=\liminf\limits_{n\rightarrow \infty }\frac{\log
\Phi (n)}{n} \ {\rm and} \ \log b=\liminf\limits_{n\rightarrow \infty }\frac{\log
\log \Phi (n)}{n}.$$ Then
\begin{equation*}
\dim _{\mathrm{H}}\mathcal{F}(\Phi )=\left\{ 
\begin{array}{ll}
t_{B}=\inf \{s\geq 0:\mathsf{P}\left( T,-s^{2}\log B-s\log (|T^{\prime
}|\right) \leq 0\}& 
\mathrm{if}\ \ 1<B< \infty; \\ [3ex] \frac{1}{1+b} & 
\mathrm{if}\ \ B=\infty,
\end{array}
\right.  
\end{equation*}
where $\mathsf{P}$ represents the pressure function defined in Section \ref{Pressure Functions}.\end{thm}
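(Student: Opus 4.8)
The plan is to establish matching upper and lrr lower bounds for $\dim_{\mathrm H}\mathcal F(\Phi)$, handling the cases $1<B<\infty$ and $B=\infty$ separately, and in each case exploiting the fact that $\mathcal F(\Phi)=\mathcal E_2(\Phi)\setminus\mathcal E_1(\Phi)$ forces a delicate balance: infinitely often the \emph{product} $a_n a_{n+1}$ is large, but \emph{eventually} no single $a_{n+1}$ alone reaches $\Phi(n)$. The upper bound is the easier direction. Since $\mathcal F(\Phi)\subset\mathcal E_2(\Phi)$, one gets $\dim_{\mathrm H}\mathcal F(\Phi)\le\dim_{\mathrm H}\mathcal E_2(\Phi)$, so it would suffice to quote (or reprove) the dimension formula for $\mathcal E_2(\Phi)$; if that is not already available in the literature in the exact form needed, I would instead cover $\mathcal F(\Phi)$ directly by the natural cylinder sets $I_n(a_1,\dots,a_n)$ with $a_n a_{n+1}\ge\Phi(n)$, using the standard estimates $|I_n(a_1,\dots,a_n)|\asymp q_n^{-2}$ and $q_n\le\prod(a_i+1)$, and then summing $\sum \sum_{a_n a_{n+1}\ge\Phi(n)} |I_{n+1}|^s$. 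Splitting the inner sum according to whether $a_n$ or $a_{n+1}$ carries the bulk of the product, and optimising, produces the critical exponent $t_B$ defined via $\mathsf P(T,-s^2\log B-s\log|T'|)\le 0$ in the finite-$B$ case and $1/(1+b)$ in the infinite-$B$ case. The appearance of $s^2\log B$ (rather than the $s\log B$ of Theorem~\ref{WaWu}) is the key structural novelty and comes precisely from the product constraint: when $a_n a_{n+1}\approx\Phi(n)\approx B^n$, the two factors split roughly as $B^{\sigma n}$ and $B^{(1-\sigma)n}$, and the covering sum is governed by $\int B^{-s\sigma n}B^{-s(1-\sigma)n}\,\mathrm d$ type behaviour after accounting for the $q$-growth, which self-consistently yields the quadratic term.

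For the lower bound, which I expect to be \textbf{the main obstacle}, the plan is a Cantor-subset construction in the spirit of Wang--Wu and the mass distribution principle. I would fix $s<t_B$ (resp. $s<1/(1+b)$), choose via the pressure/variational characterisation a finite alphabet and a Bernoulli-type measure $\mu$ on sequences with partial quotients of controlled size, and then build a subset of $\mathcal F(\Phi)$ by prescribing: along a sparse sequence of indices $n_k$, force $a_{n_k}\approx M$ and $a_{n_k+1}\approx \Phi(n_k)/M$ for a suitably chosen moderate $M=M(n_k)$ so that the product hits $\Phi(n_k)$ (ensuring membership in $\mathcal E_2(\Phi)$) while \emph{each individual} partial quotient stays below $\Phi(m)$ for the relevant $m$ (ensuring the complementary condition, hence exclusion from $\mathcal E_1(\Phi)$); at all other indices let the partial quotients range over the chosen finite alphabet. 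The sparsity of the $n_k$ must be tuned so the forced blocks do not lower the dimension --- this is exactly where the limit-inferior hypotheses on $\Phi$ enter, guaranteeing that infinitely often $\Phi(n)$ is not too much larger than $B^n$ (resp. $e^{b^n}$), so that $M$ can be taken to make both constraints compatible. One then estimates $\mu(I_n)$ against $|I_n|^s$ on this Cantor set, using the pressure function to control the contribution of the free blocks and a direct computation for the forced blocks, and applies the mass distribution principle to conclude $\dim_{\mathrm H}\mathcal F(\Phi)\ge s$.

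A few technical points will need care. First, the condition defining $\mathcal F(\Phi)$ is not a "for all $n$ in an infinite set" condition but a conjunction of an infinitely-often clause and an eventually clause, so the Cantor set must be constructed to satisfy \emph{both} simultaneously; in particular at the free indices one must cap the partial quotients below $\Phi(m)$ as well, which is harmless for the dimension since $\Phi(m)\to\infty$. Second, the case $B=\infty$ requires the double-exponential bookkeeping: here $\log\log\Phi(n)$ is the relevant scale, the free alphabet can be taken to grow, and the target dimension $1/(1+b)$ matches the second clause of Theorem~\ref{WaWu} --- indeed one expects $\dim_{\mathrm H}\mathcal F(\Phi)=\dim_{\mathrm H}\mathcal E_1(\Phi)$ in this regime, which gives a useful sanity check and suggests that in the lower bound one can essentially reuse the $\mathcal E_1(\Phi)$ construction with a mild modification forcing the extra product condition. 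Third, to pass from "$a_n\approx B^{\sigma n}$" heuristics to rigorous cylinder estimates I would rely on the standard fact that prescribing $a_n\in[A_n,2A_n]$ changes $|I_n|$ and $q_n$ only by controlled multiplicative factors, and on continuity of the pressure function (Proposition~\ref{proWW}-type arguments) to ensure $t_B$ and the associated measures vary well. Assembling these, the upper and lower bounds coincide and give the stated formula.
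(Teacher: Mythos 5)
Your overall architecture --- reduce to $\Phi(n)=B^{n}$, prove an upper bound by a covering argument, prove a lower bound by a sparse Cantor construction plus the mass distribution principle, and treat $B=\infty$ separately via {\L}uczak-type sets, where indeed $\dim_{\mathrm H}\mathcal F(\Phi)=1/(1+b)=\dim_{\mathrm H}\mathcal E_{1}(\Phi)$ --- is the same as the paper's, but two of your steps would not deliver $t_{B}$ as written. The smaller issue is your upper-bound fallback: summing $|I_{n+1}|^{s}$ over the individual cylinders with $a_{n}a_{n+1}\ge B^{n}$ gives, for $s>1/2$, inner sums $\asymp B^{(1-2s)n}a_{n}^{-1}q_{n-1}^{-2s}$, and summing over $a_{n}\le B^{n}$ costs only a factor $\asymp n$; the critical exponent you extract is therefore governed by $(1-2s)\log B$, which is strictly worse than $-s^{2}\log B$ since $1-2s+s^{2}=(1-s)^{2}>0$. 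To get the sharp bound one must collapse unions of cylinders into fundamental intervals, exactly as the paper does: for $a_{n}\le B^{sn}$ collapse the whole tail $\{a_{n+1}\ge B^{n}/a_{n}\}$ into a single interval of length $\asymp (B^{n}a_{n}q_{n-1}^{2})^{-1}$, and for $a_{n}\ge B^{sn}$ collapse over $a_{n}$ at level $n-1$; with the threshold $B^{sn}$ the $s$-volume becomes $\asymp\sum (B^{ns}q_{n-1}^{2})^{-s}\le B^{-n\epsilon^{2}}$ for $s=t_{B}+2\epsilon$, which is where the quadratic term really enters (the part of $\mathcal F$ with $a_{n}\ge B^{n}$ infinitely often is handled separately by Theorem \ref{WaWu} together with $s_{B}\le t_{B}$).

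The serious gap is in the lower bound, at the point you call a ``suitably chosen moderate $M=M(n_{k})$'' together with the claim that the sparsity of the $n_{k}$ can be tuned ``so the forced blocks do not lower the dimension''. If $M(n_{k})$ is bounded, or even $M(n_{k})=B^{o(n_{k})}$, then your construction forces $a_{n_{k}+1}\ge B^{(1-o(1))n_{k}}$ for every $k$, so the Cantor set is contained in $\{x: a_{n+1}(x)\ge B^{(1-o(1))n}\ \text{for infinitely many } n\}$, whose dimension is $s_{B}$ by Theorem \ref{WaWu}; this is strictly smaller than $t_{B}$ (for $0<s<1$ the potential $-s^{2}\log B-s\log|T'|$ dominates $-s\log B-s\log|T'|$, so the zero of its pressure is larger). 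No tuning of sparsity or of the free alphabet can repair this: the forced entries are exponentially large in $n_{k}$, hence they consume a fixed positive proportion of $\log|J_{n_{k}+1}|$ at scale $n_{k}$ no matter how far apart the $n_{k}$ are; sparsity only washes out the accumulated influence of earlier blocks, and if it could remove the local effect your lower bound would approach the dimension of the free part, contradicting your own upper bound. The missing idea, which is the heart of the paper's argument, is the self-consistent split of the product: fix $s<t_{B}$, set $\alpha=B^{s}$, force $a_{n_{k}}\asymp\alpha^{n_{k}}=B^{sn_{k}}$ and let $a_{n_{k}+1}$ range over $\asymp B^{(1-s)n_{k}}$ integers near $B^{n_{k}}/\alpha^{n_{k}}$; then $a_{n_{k}}a_{n_{k}+1}\ge B^{n_{k}}$ while each factor is eventually far below $B^{n}$ (so membership in $\mathcal F$ and exclusion from $\mathcal E_{1}$ are automatic), and, crucially, the measure on the free blocks must carry the tilt $(\alpha^{L}q_{L}^{2})^{-s}=(B^{sL}q_{L}^{2})^{-s}$ coming from the defining equation of $t_{L,B}(M)$, so that the H\"older estimate $\mu(J_{n})\ll |J_{n}|^{s-o(1)}$ closes at every scale, including $n_{k}$ and $n_{k}+1$. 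Without this dimension-dependent choice of the split and the matching tilt in the measure, the mass distribution principle cannot give more than $s_{B}$.
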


Note that if we take $B=1$ then from the definition of $\mathcal F(\Phi)$ we have $a_{n+1}(x)<1$ which is a contradiction to the assumption that $a_{n+1}(x)\geq 1.$ Therefore, $B$ is strictly greater than 1. 

When $\Phi$ is a function of the $n$th convergents ($q_n(x)$)  then the Hausdorff dimension of the set $\mathcal{F}(\Phi )$ has been  established by the authors in \cite{BaBoHu18},  and the Hausdorff measure theoretic results for the set  $\mathcal{E}_{2}(\Phi)$   have  been established by Hussain-Kleinbock-Wadleigh-Wang in \cite{HKWaW17}. The Hausdorff dimension of level sets within this setup are investigated very recently by Huang-Wu  \cite{LiHu19}.

\section{Preliminaries}\label{adef}
In this section we aim to gather some fundamental properties of continued
fractions, pressure function and a few auxiliary results that will be
helpful for obtaining the Hausdorff dimension of $\mathcal{F}(\Phi ).$

\subsection{Continued fractions and Diophantine approximation}

\label{cfd} For any vector ${(a_{1},\dots ,a_{n})\in \mathbb{N}^{n}}$
with any $n\in \mathbb{N},$ we call 
\begin{equation}
I_{n}(a_{1},a_{2},\dots ,a_{n})=\left\{ 
\begin{array}{ll}
\left[ \frac{p_{n}}{q_{n}},\frac{p_{n}+p_{n-1}}{q_{n}+q_{n-1}}\right) & 
\mathrm{if}\ \ n\ \mathrm{is\ even}; \\ 
\left( \frac{p_{n}+p_{n-1}}{q_{n}+q_{n-1}},\frac{p_{n}}{q_{n}}\right] & 
\mathrm{if}\ \ n\ \mathrm{is\ odd},
\end{array}
\right.  \label{gp}
\end{equation}
a ``basic cylinder" of order $n,$ where $p_{n},\ q_{n}$ are generated by the
following recursive relation
\begin{equation}
\begin{split}
p_{-1}& =1,~p_{0}=0,~p_{n+1}=a_{n+1}p_{n}+p_{n-1}, \\
q_{-1}& =0,~q_{0}=1,~q_{n+1}=a_{n+1}q_{n}+q_{n-1}.
\end{split}
\label{recu}
\end{equation}

In fact the basic cylinder of order $n$ represents the set of those real
numbers in $[0,1)$ that have continued fraction expansion starting from $
a_{1},\dots, a_{n},$ that is 
\begin{equation}
I_{n}=I_{n}(a_{1},\dots ,a_{n}):=\left\{ x\in [
0,1):a_{1}(x)=a_{1},\dots ,a_{n}(x)=a_{n}\right\} .  \label{cyl}
\end{equation}

From \cite{Khi_64} it is well known that the length of $I_{n}$ is 
\begin{equation}  \label{cyle}
\frac{1}{2q_{n}^{2}}\leq |I_{n}(a_{1},\ldots ,a_{n})|=\frac{1}{
q_{n}(q_{n}+q_{n-1})}\leq \frac{1}{q_{n}^{2}},
\end{equation}
since $p_{n-1}q_{n}-p_{n}q_{n-1}=(-1)^{n},\ \mathrm{for\ all}\ n\geq 1. $

For any $n\geq 1$ and irrational $x\in [ 0,1),$ let $p_{n}(x)=p_{n}$
and $q_{n}(x)=q_{n}$ be given by \ref{recu}, define $\frac{p_{n}(x)}{q_{n}(x)
}$ ``the $n$th convergent of $x"$ by\ 
\begin{equation*}
\frac{p_{n}(x)}{q_{n}(x)}:~=~[a_{1}(x),\ldots ,a_{n}(x)]\quad (n\geq 1),
\end{equation*}

From \eqref{recu} note that for any $n\geq1,$ $q_{n}$ is determined by $
a_{1},\dots,a_{n}.$ Therefore, we can write $q_{n}=q_{n}(a_{1},\dots,a_{n}).$
Just to avoid confusion we can use $a_{n}$ and $q_{n}$ in place of $a_{n}(x)$
and $q_{n}(x),$ respectively.

\begin{pro}[{\protect\cite[Khintchine]{Khi_64}}]
\label{pp3}Let $k\geq1,$ $n\geq1$ and $a_{1},\dots ,a_{n}$ be positive
integers. Then we have

\noindent $\rm{({P}_{1})}$\label{eq P_2} $q_{n}\geq 2^{(n-1)/2}$ and for
any $1\leq k \leq n,$ 
\begin{equation*}
\frac{a_k+1}{2}\leq \frac{q_n(a_1,\ldots, a_n)}{q_n(a_1,\ldots, a_{k-1},
a_{k+1}\ldots, a_n)}\leq a_k+1.
\end{equation*}

\noindent $\rm{({P}_{2})}$ 
\begin{eqnarray}
&&q_{n+k}(a_{1},\ldots ,a_{n},a_{n+1}\ldots ,a_{n+k})\geq q_{n}(a_{1},\ldots
,a_{n})q_{k}(a_{n+1},\ldots ,a_{n+k}),  \label{eq P_31} \\
&&q_{n+k}(a_{1},\ldots ,a_{n},a_{n+1}\ldots ,a_{n+k})\leq
2q_{n}(a_{1},\ldots ,a_{n})q_{k}(a_{n+1},\ldots ,a_{n+k}).  \label{eq P_3}
\end{eqnarray}

 \noindent $\rm{({P}_{3})}$ 
\begin{equation*}
\frac{1}{3a_{n+1}q_{n}^{2}}\,<\,\Big|x-\frac{p_{n}}{q_{n}}\Big|=\frac{1}{
q_n(q_{n+1}+T^{n+1}xq_n)}<\,\frac{1}{a_{n+1}q_{n}^{2}},
\end{equation*}
and for any $n\geq1$ the derivative of $T^{n}$ is given by

\begin{equation}  \label{deriva}
(T^{n})^{\prime }(x)=\frac{(-1)^{n}}{(xq_{n-1}-p_{n-1})^{2}}.
\end{equation}
\end{pro}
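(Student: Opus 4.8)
These three items are classical properties of continuants, so the plan is to recall their short derivations. All of them rest on the recurrence \eqref{recu}, the determinant relation $p_{n-1}q_n-p_nq_{n-1}=(-1)^n$ recorded after \eqref{cyle}, and the monotonicity facts $q_{j-1}\le q_j$ and $p_j\le q_j$ for $j\ge1$, each immediate by induction from \eqref{recu} using $a_i\ge1$.

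The bound $q_n\ge2^{(n-1)/2}$ follows by iterating $q_{n+1}=a_{n+1}q_n+q_{n-1}\ge q_n+q_{n-1}\ge 2q_{n-1}$ back to $q_0=1$, $q_1=a_1\ge1$. For the rest I would first record the splitting identity, valid for $1\le k\le n$,
\begin{equation*}
q_n(a_1,\dots,a_n)=q_k(a_1,\dots,a_k)\,q_{n-k}(a_{k+1},\dots,a_n)+q_{k-1}(a_1,\dots,a_{k-1})\,p_{n-k}(a_{k+1},\dots,a_n),
\end{equation*}
which falls out of the matrix identity $\bigl(\begin{smallmatrix}0&1\\1&a_1\end{smallmatrix}\bigr)\cdots\bigl(\begin{smallmatrix}0&1\\1&a_n\end{smallmatrix}\bigr)=\bigl(\begin{smallmatrix}p_{n-1}&p_n\\q_{n-1}&q_n\end{smallmatrix}\bigr)$ on splitting the product after its $k$-th factor and reading off the $(2,2)$ entry. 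Specialising the split to position $n$ gives $q_{n+k}=q_n\,q_k(a_{n+1},\dots,a_{n+k})+q_{n-1}\,p_k(a_{n+1},\dots,a_{n+k})$, whence \eqref{eq P_31} is immediate (the second summand is $\ge0$) and \eqref{eq P_3} follows from $q_{n-1}\le q_n$ and $p_k\le q_k$; this settles $(\mathrm{P}_2)$.

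For $(\mathrm{P}_1)$, apply the splitting identity once as written and once with $k$ replaced by $k-1$, the latter applied to the word $(a_1,\dots,a_{k-1},a_{k+1},\dots,a_n)$ in which $a_k$ has been deleted. This expresses both $q_n(a_1,\dots,a_n)$ and $q_n(a_1,\dots,a_{k-1},a_{k+1},\dots,a_n)$ through the four quantities $q_{k-1}\hat q$, $q_{k-2}\hat q$, $q_{k-1}\hat p$, $q_{k-2}\hat p$, where $\hat q=q_{n-k}(a_{k+1},\dots,a_n)$ and $\hat p=p_{n-k}(a_{k+1},\dots,a_n)$. Writing $N=q_{k-1}\hat q$, $M=q_{k-2}\hat q+q_{k-1}\hat p$ and $L=q_{k-1}\hat q+q_{k-2}\hat p$, and substituting $q_k=a_kq_{k-1}+q_{k-2}$, exhibits the ratio as $(a_kN+M)/L$; the inequalities $N\le L\le2N$ and $L-N\le M\le L$ (themselves immediate from $q_{k-2}\le q_{k-1}$ and $\hat p\le\hat q$) then give $(a_kN+M)/L\le a_kN/L+M/L\le a_k+1$ and $(a_kN+M)/L\ge(a_k-1)N/L+(N+M)/L\ge(a_k+1)/2$.

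For $(\mathrm{P}_3)$, substitute $\xi_n=a_n+T^nx$ into the standard relation $x=\dfrac{p_{n-1}\xi_n+p_{n-2}}{q_{n-1}\xi_n+q_{n-2}}$ and use \eqref{recu} to obtain $x=\dfrac{p_n+p_{n-1}T^nx}{q_n+q_{n-1}T^nx}$; subtracting $p_n/q_n$ and invoking the determinant relation yields $x-\dfrac{p_n}{q_n}=\dfrac{(-1)^nT^nx}{q_n(q_n+q_{n-1}T^nx)}$, and $1/T^nx=a_{n+1}+T^{n+1}x$ together with $q_{n+1}=a_{n+1}q_n+q_{n-1}$ rewrites the denominator as $q_n(q_{n+1}+T^{n+1}x\,q_n)$. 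Since $0<T^{n+1}x<1$ we have $q_nq_{n+1}<q_n(q_{n+1}+q_nT^{n+1}x)<q_n(q_{n+1}+q_n)$, so feeding in $a_{n+1}q_n<q_{n+1}\le(a_{n+1}+1)q_n$ and $a_{n+1}+2\le3a_{n+1}$ produces the two displayed bounds $1/(3a_{n+1}q_n^2)$ and $1/(a_{n+1}q_n^2)$. Finally, solving the relation above for $T^nx=\dfrac{p_n-xq_n}{xq_{n-1}-p_{n-1}}$ and differentiating, once more using the determinant relation, produces $(T^n)'(x)=\dfrac{(-1)^n}{(xq_{n-1}-p_{n-1})^2}$.

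Every step is a direct computation; the one place demanding a little care is the ratio estimate in $(\mathrm{P}_1)$, where the four-term comparison must be arranged precisely so as to land on the constants $2$ and $1$ rather than on something weaker.
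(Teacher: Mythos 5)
Your proof is correct. The paper itself offers no argument for this proposition (it is quoted from Khintchine's book), so there is nothing to compare against; your derivation is the standard continuant argument, and all the steps check out: the matrix splitting identity is right under the paper's conventions ($p_{-1}=1$, $p_0=0$, $q_{-1}=0$, $q_0=1$), specialising it at position $n$ does give both halves of $(\mathrm{P}_2)$ via $q_{n-1}\le q_n$ and $p_k\le q_k$, and your four-term comparison for $(\mathrm{P}_1)$ — in particular $L-M=(q_{k-1}-q_{k-2})(\hat q-\hat p)\ge 0$ and $N+M\ge L$ — lands exactly on the constants $2$ and $1$, including the boundary cases $k=1$ and $k=n$ where the split of the deleted word is degenerate but trivially valid. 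The $(\mathrm{P}_3)$ computation, the two bounds from $a_{n+1}q_n<q_{n+1}\le(a_{n+1}+1)q_n$ and $a_{n+1}+2\le 3a_{n+1}$, and the differentiation of $T^nx=(p_n-xq_n)/(xq_{n-1}-p_{n-1})$ on a cylinder are likewise all correct.
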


The next theorem known as Legendre's Theorem, connects
1-dimensional Diophantine approximation with continued fractions. 
\begin{thm}[Legendre]
Let $\frac{p}{q}$ be a rational number. Then 
\begin{equation*}  \label{Legendre}
\Big|x-\frac pq\Big|<\frac1{2q^2}\Longrightarrow \frac pq=\frac{p_n(x)}{
q_n(x)},\quad \mathrm{for\ some \ } n\geq 1.
\end{equation*}
\end{thm}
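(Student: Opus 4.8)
The plan is to realise $x$ as a continued fraction whose initial block is a continued fraction expansion of $p/q$. We may assume $\gcd(p,q)=1$, that $p/q\in(0,1)$ (otherwise shrinking $q$ only strengthens the hypothesis, and the remaining cases amount to checking a trivial convergent), and that $x\notin\mathbb Q$ (the rational case is analogous with finite expansions). Write $p/q$ as a finite continued fraction $[b_1,\dots,b_k]$ — there are exactly two such expansions, of consecutive lengths, and we commit to a specific one below — and let $p_j/q_j=[b_1,\dots,b_j]$ for $0\le j\le k$, so $p_k/q_k=p/q$, with $p_{-1}=1$, $p_0=0$, $q_{-1}=0$, $q_0=1$ as in \eqref{recu}. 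Define $\theta$ by
\begin{equation*}
x=\frac{p_k\theta+p_{k-1}}{q_k\theta+q_{k-1}},\qquad\text{i.e.}\qquad \theta=-\,\frac{q_{k-1}x-p_{k-1}}{q_kx-p_k}.
\end{equation*}
Since $[b_1,\dots,b_k,t]=\dfrac{p_kt+p_{k-1}}{q_kt+q_{k-1}}$ for every real $t$ (a double application of \eqref{recu}), once we show $\theta>1$ we may expand $\theta=\lfloor\theta\rfloor+[c_1,c_2,\dots]$ with $\lfloor\theta\rfloor\ge 1$, splice the expansions, and obtain $x=[b_1,\dots,b_k,\lfloor\theta\rfloor,c_1,c_2,\dots]$; by uniqueness of the continued fraction expansion of an irrational, $a_j(x)=b_j$ for $1\le j\le k$, hence $p_k(x)/q_k(x)=p/q$ and the theorem holds with $n=k$.

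Thus the whole proof reduces to the inequality $\theta>1$, and this is where $|x-p/q|<1/(2q^2)$ is used. Among the two finite continued fraction expansions of $p/q$, the penultimate convergents $p_{k-1}/q_{k-1}$ lie on opposite sides of $p/q$, because $\frac{p_k}{q_k}-\frac{p_{k-1}}{q_{k-1}}=\frac{(-1)^{k+1}}{q_kq_{k-1}}$ reverses sign with the parity of $k$; we choose the expansion for which $p_{k-1}/q_{k-1}$ lies on the \emph{same} side of $p/q$ as $x$. Since $q_{k-1}\le q_k=q$, we get $\big|\tfrac{p_{k-1}}{q_{k-1}}-\tfrac pq\big|=\tfrac{1}{q_kq_{k-1}}\ge\tfrac1{q^2}>\tfrac1{2q^2}>\big|x-\tfrac pq\big|$, so $x$ lies \emph{strictly between} $p/q$ and $p_{k-1}/q_{k-1}$; in particular $q_{k-1}x-p_{k-1}$ and $q_kx-p_k$ have opposite signs, so $\theta>0$. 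For the magnitude, $\theta=\dfrac{q_{k-1}}{q_k}\cdot\dfrac{|x-p_{k-1}/q_{k-1}|}{|x-p_k/q_k|}$, and using that $x$ sits between the two fractions,
\begin{equation*}
\Big|x-\frac{p_{k-1}}{q_{k-1}}\Big|=\frac{1}{q_kq_{k-1}}-\Big|x-\frac{p_k}{q_k}\Big|>\frac{1}{q_kq_{k-1}}-\frac{1}{2q_k^2}\ \ge\ \frac{1}{2q_kq_{k-1}},
\end{equation*}
whence $|x-p_{k-1}/q_{k-1}|\big/|x-p_k/q_k|>\big(\tfrac1{2q_kq_{k-1}}\big)\big/\big(\tfrac1{2q_k^2}\big)=q_k/q_{k-1}$ and therefore $\theta>(q_{k-1}/q_k)(q_k/q_{k-1})=1$.

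The reductions in the first paragraph and the legitimacy of splicing the two continued fractions (which is just \eqref{recu} used twice, together with $p_0=0$, $q_0=1$) are routine. The one step that requires care — and the natural place to slip up — is the parity bookkeeping: one must select the correct of the two finite continued fraction expansions of $p/q$, namely the one whose penultimate convergent lies on the same side of $p/q$ as $x$; the opposite choice produces $\theta<0$, and the argument collapses. Everything downstream (the sandwiching of $x$, the positivity of $\theta$, and the estimate $\theta>1$) hinges on getting this sign right.
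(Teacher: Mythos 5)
Your proof is correct. The paper does not actually prove Legendre's theorem --- it is quoted as a classical fact (standard references being Khinchin \cite{Khi_64} or Hardy--Wright) --- and your argument is exactly the classical one: reduce to $\gcd(p,q)=1$ and $x$ irrational, pick the finite expansion $p/q=[b_1,\dots,b_k]$ of the right parity, define $\theta$ by $x=\frac{p_k\theta+p_{k-1}}{q_k\theta+q_{k-1}}$, and use $\bigl|x-\tfrac pq\bigr|<\tfrac1{2q^2}$ together with $q_{k-1}\le q_k$ to force $\theta>1$, after which splicing and uniqueness of the expansion give $p/q=p_k(x)/q_k(x)$. Your parity bookkeeping is the right one and matches the classical choice: demanding that $p_{k-1}/q_{k-1}$ lie on the same side of $p/q$ as $x$ is, via $p_{k-1}q_k-p_kq_{k-1}=(-1)^k$, the same as requiring $(-1)^k$ to equal the sign of $x-p/q$, which is precisely what makes $\theta>0$ and the estimate $\theta>1$ go through; the remaining inequalities all check out.
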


According to Legendre's theorem if an irrational $x$ is well approximated
by a rational $\frac{p}{q}$, then this rational must be a convergent of $x$.
Thus in order to find good rational approximates to an irrational number we
only need to focus on its convergents. Note that, from $\rm{({P}_{3})}$
of Proposition \ref{pp3},  a real number $x$ is well approximated by its
convergent $\frac{p_{n}}{q_{n}}$ if its $(n+1)$th partial quotient ($a_{n+1}$) is sufficiently large.

The next result is due to ~{\L}uczak \cite{Lu97}.
\begin{lem}
[{\protect\cite[ ~{\L}uczak]{Lu97}}]\label{Lu}
 For any $a,b>1$, the sets $$\left\{x\in[0,1):a_n(x)\ge a^{b^n}, \ \mathrm{for \ infinitely \ many \ }n\in\mathbb{
N}\right\}$$
{\rm and }
$$\left\{x\in[0,1): a_n(x)\ge a^{b^n}, \ \mathrm{for \ all \ sufficiently \ large \ } n\in \N \right \}
$$
are of the same Hausdorff dimension $\frac1{1+b}.$
\end{lem}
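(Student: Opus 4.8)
The plan is to trap both sets between the common value $\tfrac1{1+b}$, invoking the already-established Wang--Wu theorem for the upper bound and a Cantor-set construction for the lower bound. Write $A_{n}:=a^{b^{n}}$, and let $E_{\mathrm{io}}$ and $E_{\mathrm{ev}}$ denote, respectively, the first and the second set appearing in the statement; plainly $E_{\mathrm{ev}}\subseteq E_{\mathrm{io}}$. Taking $\Phi(n)=A_{n}$, one computes
\[
\liminf_{n\to\infty}\frac{\log\Phi(n)}{n}=\liminf_{n\to\infty}\frac{b^{n}\log a}{n}=\infty
\qquad\text{and}\qquad
\liminf_{n\to\infty}\frac{\log\log\Phi(n)}{n}=\liminf_{n\to\infty}\frac{n\log b+\log\log a}{n}=\log b ,
\]
so $E_{\mathrm{io}}=\mathcal E_{1}(\Phi)$ falls into case~(ii) of Theorem~\ref{WaWu}, whence $\hdim E_{\mathrm{io}}=\tfrac1{1+b}$. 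In particular $\hdim E_{\mathrm{ev}}\le\hdim E_{\mathrm{io}}=\tfrac1{1+b}$, so it remains only to prove the reverse inequality $\hdim E_{\mathrm{ev}}\ge\tfrac1{1+b}$.

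For this I would use the Cantor-type subset
\[
\mathcal C:=\bigl\{x\in[0,1):A_{n}\le a_{n}(x)<2A_{n}\ \text{ for every }n\ge 1\bigr\}\subseteq E_{\mathrm{ev}} ,
\]
together with the mass distribution principle applied to the uniform probability measure $\mu$ on $\mathcal C$, which distributes the mass of a level-$(n-1)$ cylinder equally among its $\asymp A_{n}$ children. From Proposition~\ref{pp3} one reads off, for a level-$n$ cylinder $I_{n}$ of $\mathcal C$, that $\log q_{n}=\tfrac{b^{n+1}}{b-1}\log a+O(n)$, that $|I_{n}|\asymp q_{n}^{-2}$, that $\mu(I_{n})=\prod_{i=1}^{n}(\#\text{children at step }i)^{-1}\asymp q_{n}^{-1}$ (so $|I_{n}|\asymp\mu(I_{n})^{2}$, up to errors negligible on the doubly-exponential scale), and the geometric fact that the $\asymp A_{n}$ level-$n$ cylinders of $\mathcal C$ inside a fixed level-$(n-1)$ cylinder are pairwise adjacent and together fill a sub-interval of diameter $\sigma_{n}:=A_{n}^{-1}|I_{n-1}|$, with $|I_{n}|\ll\sigma_{n}\ll|I_{n-1}|$. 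The core step is to estimate $\mu(B(x,r))$ for $x\in\mathcal C$ and all small $r$, splitting according to the position of $r$ relative to the scales $|I_{n}|\ll\sigma_{n}\ll|I_{n-1}|$. The binding case is $|I_{n}|\le r\le\sigma_{n}$: here the ball meets $\asymp r/|I_{n}|$ level-$n$ cylinders, so $\mu(B(x,r))\asymp(r/|I_{n}|)\mu(I_{n})$, and using $\mu(I_{n})=A_{n}^{-1}\mu(I_{n-1})$ and $|I_{n}|\asymp\mu(I_{n})^{2}$ a short computation reduces the ratio $\mu(B(x,r))/r^{s}$ to a quantity comparable to
\[
A_{n}^{s}\,\mu(I_{n-1})^{1-2s}=a^{b^{n}(s-(1-2s)/(b-1))+O(n)} ,
\]
which remains bounded in $n$ for every $s<\tfrac1{1+b}$; the remaining ranges of $r$ reduce, by the same computation, to the same bound for a neighbouring generation. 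Hence $\mu(B(x,r))\le C_{s}\,r^{s}$ for all $x\in\mathcal C$ and small $r$, so $\hdim\mathcal C\ge s$; letting $s\uparrow\tfrac1{1+b}$ yields $\hdim E_{\mathrm{ev}}\ge\hdim\mathcal C\ge\tfrac1{1+b}$, which with the first paragraph completes the proof.

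I expect the main obstacle to be precisely this local-measure estimate at the intermediate scales. Covering $\mathcal C$ by its level-$n$ cylinders is wasteful --- there are $\asymp q_{n}$ of them, each of length $\asymp q_{n}^{-2}$, so the naive cover only gives $\hdim\mathcal C\le\tfrac12$ --- and the further drop to $\tfrac1{1+b}$ is produced solely by the fact that the children of a cylinder do not spread across their parent but cluster into the extremely short sub-interval of diameter $\sigma_{n}$; capturing this forces one to compare $r$ carefully with all three scales $|I_{n}|<\sigma_{n}<|I_{n-1}|$ at every generation and to check that in each case $\mu(B(x,r))/r^{s}$ is dominated by the doubly-exponentially small quantity above. (Should a proof not relying on Theorem~\ref{WaWu} be wanted, the same philosophy gives the upper bound directly: instead of covering the event $\{a_{n}(x)\ge A_{n}\}$ by its constituent cylinders, cover it by the single interval $\{x\in I_{n-1}:a_{n}(x)\ge A_{n}\}$ of length $\asymp A_{n}^{-1}|I_{n-1}|$, truncating the range of every earlier partial quotient so that the number of admissible prefixes stays finite; the resulting $s$-sum then decays doubly-exponentially for each $s>\tfrac1{1+b}$, the bookkeeping of the truncations being the only delicate point.)
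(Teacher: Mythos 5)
Your argument is correct in outline, but note that the paper itself offers no proof of Lemma \ref{Lu} at all: it is quoted directly from {\L}uczak \cite{Lu97} (with the lower bound supplied in the appendix \cite{FeWuLiTs97}), so there is no in-paper argument to compare against. On its own merits your route is the standard one and the key numbers check out: with $\Phi(n)=a^{b^n}$ one indeed gets $B=\infty$ and $\liminf_n \log\log\Phi(n)/n=\log b$, so Theorem \ref{WaWu}(ii) gives $\hdim E_{\mathrm{io}}=\tfrac1{1+b}$ and hence the upper bound for $E_{\mathrm{ev}}$ by inclusion; and for the Cantor set $\mathcal C$ the binding estimate $A_n^{s}\mu(I_{n-1})^{1-2s}=a^{b^n\left(s-\frac{1-2s}{b-1}\right)+O(n)}$ is bounded precisely when $s<\tfrac1{1+b}$, while the intermediate range $\sigma_n\le r\le |I_{n-1}|$ produces exactly the same quantity (the children of a mass-carrying cylinder all cluster at the $p_{n-1}/q_{n-1}$ end, so neighbouring clusters are at distance $\gg \sigma_n$ and the ball captures at most $O(1)$ clusters), so the mass distribution principle applies and the lower bound follows. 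Two caveats are worth recording. First, deducing {\L}uczak's lemma from Theorem \ref{WaWu} is logically fine inside this paper (where Wang--Wu is taken as given) but is historically circular: the $B=\infty$ case of \cite{WaWu08} is itself proved by comparison with exactly these doubly-exponential sets, so your proof is a derivation rather than an independent one. Second, your parenthetical self-contained upper bound is the genuinely delicate point: for $s<\tfrac12$ the sum $\sum_{a_1,\dots,a_{n-1}}q_{n-1}^{-2s}$ diverges, and simply ``truncating the range of every earlier partial quotient'' discards points of the limsup set (their early partial quotients are unconstrained), so the covering must be organised recursively as in \cite{Lu97}; as written that sketch is not yet a proof, though the main path of your argument does not rely on it.
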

\section{Pressure function and Hausdorff dimension \label{Pressure Functions}
}
We collect some basic details about the so-called
`` pressure function" and its connection
with infinite systems generated by continued fractions.

Walters \cite{Wa_82} explains the concept of topological pressure and the
pressure function in general.\ For our purposes, we seek to utilise key
concepts that are specialised to the continued fraction setting. Guiding the
reader through the references, the end game is to produce a function, from
which we can produce a lower bound for the Hausdorff dimension of our set of
interest.

Let $T:X\rightarrow X$ be a continuous transformation of a compact metric
space $\left( X,d\right) $. Let $C\left( X,
\mathbb{R}
\right) $ denote the Banach algebra of real-valued continuous functions of $
X $ equipped with the supremum norm. The topological pressure of $T$ will
be a map $\mathsf P\left( T,\cdot \right) :C\left( X,
\mathbb{R}
\right) \rightarrow 
\mathbb{R}
\cup \infty $.

In the case of many linear maps, which includes self similar sets, the
dimension can be found implicitly in terms of an expression involving only
the rates of contraction. A theorem of Moran (1946) can  then be used to calculate
the Hausdorff dimension of self-similar sets (see Theorem 2.2.1 of \cite
{Pollicott2005}). In the non-linear case, however, the corresponding
generalisation of Moran, involves the so called pressure function. 

\begin{dfn}
Given any continuous function $f:X\rightarrow 
\mathbb{R}
$ we define its \emph{pressure} $\mathsf P\left( f\right) $ {\rm{(}}with respect to $T${\rm)} as
\begin{equation*}
\mathsf P\left( f\right) :=\underset{n\rightarrow \infty }{\lim \sup }\frac{1}{n}
\log \underset{\text{Sum over periodic points}}{\underbrace {\left(
\sum\limits_{\substack{ T^{n}x=x  \\ x\in X}}e^{f\left( x\right) +f\left(
Tx\right) +\cdots +f\left( T^{n-1}x\right) }\right) }}
\end{equation*}
\end{dfn}
As is seen in \cite{Pollicott2005}, the limit actually exists and so the
 ``$\lim \sup$" can actually be replaced
by a ``$\lim$". In practice, we shall
mainly be interested in a family of functions $f_{t}\left( x\right) =-t\log
\left\vert T^{\prime }\left( x\right) \right\vert $, $x\in X$ and $0\leq
t\leq d$, so that the above function reduces to
\begin{equation*}
\begin{array}{l}
\left[ 0,d\right] \rightarrow 
\mathbb{R},  \qquad
t\mapsto \mathsf P\left( f_{t}\right) =\underset{n\rightarrow \infty }{\lim \sup }
\frac{1}{n}\log \left( \sum\limits_{\substack{ T^{n}x=x  \\ x\in X}}\frac{1
}{\left\vert \left( T^{n}\right) ^{\prime }\left( x\right) \right\vert ^{t}}
\right).
\end{array}
\end{equation*}

The following standard result is essentially due to Bowen and Ruelle. Bowen
showed the result in the context of quasi-circles and Ruelle developed the
method for the case of hyperbolic Julia sets.

\begin{thm}[Bowen-Ruelle]
\label{Bowen-Ruelle Theorem}Let $T:X\rightarrow X$ be a $C^{1+\alpha }$
conformal expanding map. There is a unique solution $0\leq s\leq d$ to
\begin{equation*}
\mathsf P\left( -s\log \left\vert T^{\prime }\right\vert \right) =0,
\end{equation*}
which occurs precisely at $s=\dim _{\mathrm{H}}X$.
\end{thm}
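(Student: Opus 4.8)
The plan is to prove, in each of the two ranges of $B$, matching upper and lower bounds for $\hdim\mathcal{F}(\Phi)$. Throughout write $P(s):=\mathsf{P}(T,-s\log|T'|)$; since $-s^{2}\log B$ is a constant, $\mathsf{P}(T,-s^{2}\log B-s\log|T'|)=P(s)-s^{2}\log B$, so $t_{B}=\inf\{s\ge 0:P(s)\le s^{2}\log B\}$. I will use the standard facts that $P(s)=+\infty$ for $s\le 1/2$, that $P$ is continuous, strictly decreasing and convex on $(1/2,\infty)$ with $P(1)=0$, that $\tfrac1m\log\sum_{(a_{1},\dots,a_{m})\in\mathbb{N}^{m}}|I_{m}(a_{1},\dots,a_{m})|^{s}\to P(s)$ (via the quasi-multiplicativity of $q_{n}$ in Proposition~\ref{pp3}$(\mathrm{P}_{2})$ and bounded distortion), and that the truncated versions $P_{M}(s):=\lim_{m}\tfrac1m\log\sum_{(a_{1},\dots,a_{m})\in\{1,\dots,M\}^{m}}|I_{m}|^{s}$ increase to $P(s)$ as $M\to\infty$. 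Since $B>1$ one checks $t_{B}\in(1/2,1)$, $P(t_{B})=t_{B}^{2}\log B$, and $P(s)<s^{2}\log B$ exactly for $s>t_{B}$.

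For the upper bound when $1<B<\infty$, fix $s>t_{B}$. As $\mathcal{F}(\Phi)\subseteq\bigcap_{N}\bigcup_{n\ge N}\{x:a_{n}(x)a_{n+1}(x)\ge\Phi(n)\}$ it suffices to cover, for each large $n$, the set $A_{n}:=\{a_{n}a_{n+1}\ge\Phi(n)\}$ cheaply. Fix $(a_{1},\dots,a_{n-1})$ and a dyadic block $a_{n}\in[2^{j},2^{j+1})$. The points of $A_{n}$ with such $a_{n}$ can be covered either by the single interval $\{x\in I_{n-1}:a_{n}(x)\in[2^{j},2^{j+1})\}$ of length $\asymp 2^{-j}|I_{n-1}|$, or, since inside each $I_{n}(a_{1},\dots,a_{n})$ the tail $\{a_{n+1}\ge\Phi(n)/a_{n}\}$ is an interval of length $\asymp a_{n}\Phi(n)^{-1}|I_{n}|\asymp 2^{-j}\Phi(n)^{-1}|I_{n-1}|$, by $\asymp 2^{j}$ such intervals; taking the cheaper choice in each block and summing the geometric series in $j$ (crossover at $2^{j}\asymp\Phi(n)^{s}$) covers $A_{n}\cap I_{n-1}$ with $s$-cost $\asymp\Phi(n)^{-s^{2}}|I_{n-1}|^{s}$. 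Summing over $(a_{1},\dots,a_{n-1})$ and using the pressure--cylinder identity gives total $s$-cost $\asymp\Phi(n)^{-s^{2}}e^{(n-1)P(s)}$ at level $n$; by $\liminf_{n}\tfrac{\log\Phi(n)}{n}=\log B$, for small $\varepsilon$ this is $\le e^{n(P(s)-s^{2}\log B+s^{2}\varepsilon)}$ for large $n$, which is summable since $P(s)-s^{2}\log B<0$. Hence $\mathcal{H}^{s}(\mathcal{F}(\Phi))=0$, and letting $s\downarrow t_{B}$ gives $\hdim\mathcal{F}(\Phi)\le t_{B}$. (The constraints $a_{n+1}<\Phi(n)$, $a_{n}<\Phi(n-1)$ are not needed here, so the same argument bounds $\hdim\mathcal{E}_{2}(\Phi)$.)

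For the lower bound when $1<B<\infty$, fix $s<t_{B}$, so $P(s)>s^{2}\log B$; pick $M$ with $P_{M}(s)>s^{2}\log B$, then $\varepsilon>0$ small with $P_{M}(s)>s^{2}(\log B+\varepsilon)$, and pick a rapidly increasing sequence $n_{1}<n_{2}<\cdots$ from the subsequence realizing $\liminf\tfrac{\log\Phi(n)}{n}$ (so $\log\Phi(n_{k})\le(\log B+\varepsilon)n_{k}$), with $\sum_{j<k}n_{j}=o(n_{k})$. One builds a Cantor set $\mathcal{C}\subseteq\mathcal{F}(\Phi)$: at free indices use digits in $\{1,\dots,M\}$; at each $n_{k}$ let $a_{n_{k}}$ range over an integer interval $\asymp[\Phi(n_{k})^{s},2\Phi(n_{k})^{s})$ and $a_{n_{k}+1}$ over $\asymp[\Phi(n_{k})^{1-s},2\Phi(n_{k})^{1-s})$, so that $a_{n_{k}}a_{n_{k}+1}\ge\Phi(n_{k})$, $a_{n_{k}+1}<\Phi(n_{k})$, and, by the universal lower bound $\Phi(n_{k}-1)\ge e^{(\log B-\varepsilon)(n_{k}-1)}$ together with $s<1$, $a_{n_{k}}<\Phi(n_{k}-1)$ for $k$ large --- so all conditions defining $\mathcal{F}(\Phi)$ hold. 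One puts on $\mathcal{C}$ the measure $\nu$ that on each free block distributes mass proportionally to the $s$-th powers of the relative cylinder lengths (losing a factor $\asymp e^{-P_{M}(s)}$ per step) and at each $n_{k}$ is uniform over the $\asymp\Phi(n_{k})$ admissible pairs. The key geometric point is that within $I_{n_{k}-1}$ the $\asymp\Phi(n_{k})$ cylinders $I_{n_{k}+1}$ group into $\asymp\Phi(n_{k})^{s}$ ``clusters'' (one per value of $a_{n_{k}}$) spread over a sub-interval of length $\asymp\Phi(n_{k})^{-s}|I_{n_{k}-1}|$, each cluster of length $\asymp\Phi(n_{k})^{-s-1}|I_{n_{k}-1}|$ and of mass $\asymp\Phi(n_{k})^{-s}\nu(I_{n_{k}-1})$; checking the mass distribution principle at all scales between $|I_{n_{k}+1}|$ and $|I_{n_{k}-1}|$ shows it holds with exponent $s$ precisely when $\nu(I_{n_{k}-1})\le C|I_{n_{k}-1}|^{s}\Phi(n_{k})^{-s^{2}}$ for all $k$ (this is where the exponent $s^{2}$ enters). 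Writing $R_{k}:=\nu(I_{n_{k}-1})/|I_{n_{k}-1}|^{s}$, the construction gives $R_{k}\asymp R_{k-1}\Phi(n_{k-1})^{2s-1}e^{-(n_{k}-n_{k-1})P_{M}(s)}$, whence $\log R_{k}\le O(k)+(2s-1)(\log B+\varepsilon)\sum_{j<k}n_{j}-P_{M}(s)(n_{k}-n_{1})\le -s^{2}(\log B+\varepsilon)n_{k}\le -s^{2}\log\Phi(n_{k})$ for $k$ large, using $\sum_{j<k}n_{j}=o(n_{k})$ and $P_{M}(s)>s^{2}(\log B+\varepsilon)$. Thus $\hdim\mathcal{C}\ge s$, and $s\uparrow t_{B}$ gives $\hdim\mathcal{F}(\Phi)\ge t_{B}$.

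Finally the case $B=\infty$. For the upper bound, $\mathcal{F}(\Phi)\subseteq\mathcal{E}_{2}(\Phi)$, and since $a_{n}a_{n+1}\ge\Phi(n)$ forces $a_{n}\ge\Phi(n)^{1/2}$ or $a_{n+1}\ge\Phi(n)^{1/2}$, one has $\mathcal{E}_{2}(\Phi)\subseteq\mathcal{E}_{1}(\Psi_{1})\cup\mathcal{E}_{1}(\Psi_{2})$ with $\Psi_{1}(n)=\Phi(n)^{1/2}$, $\Psi_{2}(n)=\Phi(n-1)^{1/2}$; both satisfy $\liminf\tfrac{\log\Psi_{i}(n)}{n}=\infty$ and $\liminf\tfrac{\log\log\Psi_{i}(n)}{n}=\log b$, so Theorem~\ref{WaWu}(ii) gives $\hdim\mathcal{E}_{1}(\Psi_{i})=\tfrac1{1+b}$ and hence $\hdim\mathcal{F}(\Phi)\le\tfrac1{1+b}$ by countable stability. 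For the lower bound one constructs a Cantor subset of $\mathcal{F}(\Phi)$ placing a single heavy partial quotient $a_{m_{k}+1}\asymp\Phi(m_{k})$ (with $a_{m_{k}}$ bounded, so that both $a_{m_{k}}a_{m_{k}+1}\ge\Phi(m_{k})$ and $a_{m_{k}+1}<\Phi(m_{k})$ hold) at a sequence of indices $m_{k}$ chosen --- following the treatments of Wang--Wu's case (ii) and of \L uczak in Lemma~\ref{Lu} --- so that $\log\Phi(m_{k})$ follows a doubly-exponential profile of rate tending to $b$; the \L uczak-type computation (binding scales again being the cluster scales $|I_{m_{k}}|/\Phi(m_{k})$) then yields dimension tending to $\tfrac1{1+b}$. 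The main technical obstacle throughout is the scale-by-scale geometric analysis at these ``cluster'' scales --- recognising $|I_{n-1}|/a_{n}$ rather than $|I_{n+1}|$ as the efficient covering scale at a heavy digit --- which is exactly what produces the exponent $s^{2}$ (resp.\ the $\tfrac1{1+b}$) and, in the lower bound, forces the delicate balancing of the sparse sequence $(n_{k})$ against the pressure; in the $B=\infty$ lower bound one additionally has to choose the locations $m_{k}$ adaptively to the oscillation of $\Phi$.
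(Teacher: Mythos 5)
Your proposal does not prove the statement at hand. The statement is the Bowen--Ruelle theorem: for a $C^{1+\alpha}$ conformal expanding map $T:X\rightarrow X$, the map $s\mapsto \mathsf P\left(-s\log|T'|\right)$ has a unique zero in $[0,d]$, and that zero equals $\dim_{\mathrm{H}}X$. What you have written is instead a proof sketch of the paper's main result, Theorem \ref{indexddd} (the computation of $\dim_{\mathrm{H}}\mathcal{F}(\Phi)$ in the cases $1<B<\infty$ and $B=\infty$). Nowhere in your argument do you establish existence or uniqueness of the zero of the pressure function $s\mapsto\mathsf P\left(-s\log|T'|\right)$ for a general conformal expanding map, nor the identification of that zero with the Hausdorff dimension of the repeller $X$; indeed you \emph{assume} dimension-from-pressure facts of exactly this type (behaviour of $P(s)$, its monotonicity and continuity, the pressure--cylinder identity) as known inputs. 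The paper itself treats the Bowen--Ruelle statement by citation (Theorem 2.3.2 of \cite{Pollicott2005}), so a self-contained proof would have to supply: (i) continuity and strict monotonicity of $s\mapsto\mathsf P\left(-s\log|T'|\right)$ (expansion gives a definite decrease rate $\log\lambda$ where $|T'|\geq\lambda>1$), together with $\mathsf P(0)=h_{\mathrm{top}}(T)\geq 0$ and $\mathsf P\left(-d\log|T'|\right)\leq 0$, yielding a unique zero $s_{0}\in[0,d]$; (ii) the upper bound $\dim_{\mathrm{H}}X\leq s_{0}$ by covering $X$ with dynamical cylinders whose $s$-cost is governed, via bounded distortion from the $C^{1+\alpha}$ conformal hypothesis, by $e^{n\mathsf P\left(-s\log|T'|\right)}$; and (iii) the lower bound via the Gibbs (conformal) measure for the potential $-s_{0}\log|T'|$ and the mass distribution principle. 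None of these steps appears in your write-up, so as a proof of the quoted theorem it has a fundamental gap: it addresses a different statement altogether.

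That said, the material you produced is not without value in the context of the paper: it is an outline, broadly parallel in spirit to the paper's Section 4, of the upper-bound covering argument (with the dyadic-block optimisation producing the exponent $s^{2}$) and of the Cantor-set-plus-mass-distribution lower bound for $\mathcal{F}(\Phi)$. If the intended target had been Theorem \ref{indexddd}, one would then assess the unverified inputs (e.g.\ the claimed behaviour of $P(s)$ near $s=1/2$, the cluster-scale estimates, and the bookkeeping for $R_{k}$) against the paper's explicit construction; but for the Bowen--Ruelle theorem as stated, the proposal simply does not engage with what must be proved.
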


\begin{proof}
See Theorem 2.3.2 of \cite{Pollicott2005}.
\end{proof}


Finally, we observe that the function $t\mapsto \mathsf P\left( f_{t}\right) $ has
the following interesting properties:

\begin{enumerate}
\item[(i)] $\mathsf P\left( 0\right) =\log k$;

\item[(ii)] $t\mapsto \mathsf P\left( f_{t}\right) $ is strictly monotone
decreasing; and

\item[(iii)] $t\mapsto \mathsf P\left( f_{t}\right) $ is analytic on $\left[ 0,d
\right] $.
\end{enumerate}

Property (i) is immediate from the definition. For the proofs of properties
(ii) and (iii) see page 32 of \cite{Pollicott2005}.

The above ideas are sufficient background to explain the reason for the use
of pressure functions in Hausdorff dimension calculations for fractal sets
of continued fractions.

\textbf{The Continued Fraction Setting: }For more thorough results on
pressure function in infinite conformal iterated function systems, we refer
to \cite{MaUr96, MaUr99, MaUr03}. After defining the limit
set, they prove an analogue of the Moran-Bowen formula, identifying its
Hausdorff dimension as the zero of the pressure function $\mathsf P{\left(t\right)}.$ Mauldin-Urba\'{n}ski \cite{MaUr99} presented a form of pressure function
in conformal iterated function systems with applications to the geometry of
continued fractions.

From these papers, a pressure function with a continuous potential can be
approximated by the pressure function restricted to the subsystems in
continued fraction.

Let us consider a finite or infinite subset $\mathcal{A}$ of $\mathbb{N}$
and define 
\begin{equation*}
Y_{\mathcal{A}}=\{x\in [0,1):{\text{for all}}\ n\geq 1,a_{n}(x)\in 
\mathcal{A}\}.
\end{equation*}
Then $(Y_{\mathcal{A}},T)$ is a subsystem of $([0,1),T)$ where $T$ is a
Gauss map as defined in equation (\ref{GaussMap}). Given any real function $
\varphi :[0,1)\rightarrow \mathbb{R},$ the pressure function restricted to
the system $(Y_{\mathcal{A}},T)$ is defined as 
\begin{equation}
\mathsf{P}_{\mathcal{A}}(T,\varphi ):=\lim_{n\rightarrow \infty }\frac{1}{n}
\log \sum_{a_{1},\cdots ,a_{n}\in \mathcal{A}}\sup_{x\in Y_{\mathcal{A}
}}e^{S_{n}\varphi ([a_{1},\cdots ,a_{n}+x])},  \label{5}
\end{equation}
where $S_{n}\varphi (x)$ denotes the ergodic sum $\varphi (x)+\cdots
+\varphi (T^{n-1}x)$. Denote $\mathsf{P}_{\mathbb{N}}(T,\varphi )$ by $
\mathsf{P}(T,\varphi )$ for $\mathcal{A}=\mathbb{N}$. Also note that if $
\varphi $ satisfy the continuity property than we can remove the supremum
from equation \eqref{5}.

For each $n\geq1$ we represent the $n$th variation of $\varphi$ by 
\begin{equation*}
{\mathop{\rm{Var}}}_{n}(\varphi):=\sup\Big\{|\varphi(x)-\varphi(y)|:
I_n(x)=I_n(y)\Big\}.
\end{equation*}
The existence of the limit in equation (\ref{5}) is due to the following
result.

\begin{pro}[{\protect\cite[Proposition 2.4]{LiWaWuXu14}}]
\label{pp1} Let $\varphi:[0,1)\to \mathbb{R}$ be a real function with $
\mathrm{Var}_1(\varphi)<\infty$ and $\mathrm{Var}_{n}(\varphi)\to 0$ as $
n\to \infty$. Then the limit defining $\mathsf{P}_\mathcal{A}(T,\varphi)$
exists and the value of $\mathsf{P}_\mathcal{A}(T,\varphi)$ remains the same
even without taking supremum over $x\in Y_\mathcal{A}$ in \eqref{5}.
\end{pro}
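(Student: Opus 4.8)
The plan is to follow the now-standard argument of Mauldin--Urba\'nski type, showing first that the sequence defining the pressure is subadditive (up to a bounded error coming from the variation of $\varphi$), so that the limit exists by Fekete's lemma, and then that replacing the supremum over $x\in Y_{\mathcal A}$ by any fixed choice of point in each cylinder changes each term by a multiplicative factor that is bounded uniformly in $n$, hence does not affect the limit.

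First I would set, for each $n\geq 1$,
\begin{equation*}
Z_n:=\sum_{a_1,\ldots,a_n\in\mathcal A}\sup_{x\in Y_{\mathcal A}}e^{S_n\varphi([a_1,\ldots,a_n+x])},
\qquad
\widetilde Z_n:=\sum_{a_1,\ldots,a_n\in\mathcal A}e^{S_n\varphi(x_{(a_1,\ldots,a_n)})},
\end{equation*}
where $x_{(a_1,\ldots,a_n)}$ is an arbitrary point of $Y_{\mathcal A}$ lying in the cylinder $I_n(a_1,\ldots,a_n)$. The key elementary estimate is that for $x,y$ in the same cylinder $I_n$ one has $|S_n\varphi(x)-S_n\varphi(y)|\leq\sum_{k=1}^{n}\mathrm{Var}_k(\varphi)$, because $T^{k-1}x$ and $T^{k-1}y$ then lie in a common cylinder of order $n-k+1\geq 1$. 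Since $\mathrm{Var}_1(\varphi)<\infty$ and $\mathrm{Var}_n(\varphi)\to 0$, the Cesàro averages $\tfrac1n\sum_{k=1}^n\mathrm{Var}_k(\varphi)\to 0$; in particular $C:=\sup_n\sum_{k=1}^n\mathrm{Var}_k(\varphi)$ may be infinite, but $\tfrac1n\sum_{k=1}^n\mathrm{Var}_k(\varphi)$ is bounded, which is all that is needed. This gives $\widetilde Z_n\leq Z_n\leq e^{\sum_{k\le n}\mathrm{Var}_k(\varphi)}\,\widetilde Z_n$, so $\tfrac1n\log Z_n$ and $\tfrac1n\log\widetilde Z_n$ have the same limiting behaviour, and the supremum-free formula will follow once the limit is shown to exist.

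Next I would prove the (almost) submultiplicativity $Z_{m+n}\leq e^{\mathrm{Var}_1(\varphi)}\,Z_m Z_n$. Concatenating words, every length-$(m+n)$ word over $\mathcal A$ splits uniquely as $(a_1,\ldots,a_m)$ followed by $(a_{m+1},\ldots,a_{m+n})$, and $S_{m+n}\varphi([a_1,\ldots,a_{m+n}+x])=S_m\varphi([a_1,\ldots,a_m+y])+S_n\varphi([a_{m+1},\ldots,a_{m+n}+x])$ for the appropriate $y=[a_{m+1},\ldots,a_{m+n}+x]\in Y_{\mathcal A}$; taking suprema and using $\mathrm{Var}_1(\varphi)$ to compare $S_m\varphi$ at $y$ with its supremum over the first-$m$ cylinder produces the stated inequality (the constant $e^{\mathrm{Var}_1(\varphi)}$ can be absorbed since $(1/n)\log e^{\mathrm{Var}_1(\varphi)}\to 0$). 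Then Fekete's subadditive lemma applied to $\log Z_n+\mathrm{Var}_1(\varphi)$ yields that $\lim_{n\to\infty}\tfrac1n\log Z_n$ exists in $\mathbb R\cup\{+\infty\}$, which is the existence claim; combined with the sandwich of the previous paragraph, $\lim_n\tfrac1n\log\widetilde Z_n$ exists and equals $\mathsf P_{\mathcal A}(T,\varphi)$.

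The main obstacle, and the only place some care is needed, is the handling of $\mathcal A$ infinite, where $Z_n$ may be $+\infty$: one must check that the inequalities above remain valid in $[0,+\infty]$ and that Fekete's lemma still delivers a limit (possibly $+\infty$). A secondary technical point is to be careful that the variation bound is applied to cylinders in $Y_{\mathcal A}$, not in $[0,1)$; since $Y_{\mathcal A}\subseteq[0,1)$ and the Gauss-map dynamics on $Y_{\mathcal A}$ is the restriction of that on $[0,1)$, the order-$n$ cylinders of $Y_{\mathcal A}$ are intersections of order-$n$ cylinders of $[0,1)$ with $Y_{\mathcal A}$, so $\mathrm{Var}_n(\varphi)$ computed on $[0,1)$ still controls oscillation along them. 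Everything else is the routine manipulation of ergodic sums sketched above.
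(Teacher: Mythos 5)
The paper does not give its own proof of this proposition---it is quoted directly from \cite[Proposition 2.4]{LiWaWuXu14}---and your argument is exactly the standard one behind that result: bounded distortion of the ergodic sums via $\sum_{k\le n}\mathrm{Var}_k(\varphi)$ on common cylinders, (near-)submultiplicativity of the partition sums $Z_n$, Fekete's lemma for existence of the limit, and the fact that the Ces\`aro averages $\tfrac1n\sum_{k\le n}\mathrm{Var}_k(\varphi)$ tend to $0$ so that dropping the supremum does not change the value. The proposal is correct as outlined; the only slip is the parenthetical claim that boundedness of $\tfrac1n\sum_{k\le n}\mathrm{Var}_k(\varphi)$ ``is all that is needed''---for equality (rather than mere comparability) of the two limits you need this average to tend to $0$, which you had in fact already deduced from $\mathrm{Var}_n(\varphi)\to 0$.
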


The next result by Hanus, Mauldin and Urba\'{n}ski \cite{HaMU} shows that when
the system $([0,1),T)$ is approximated by its subsystems $(Y_{\mathcal{A}},T)
$ then the pressure function has a continuity property in the system of
continued fractions (for an elementary
proof see \cite{HaMU} or \cite{LiWaWuXu14}). 

\begin{pro}[{\protect\cite[Proposition 2]{HaMU}}]
\label{l5} Let $\varphi:[0,1)\to \mathbb{R}$ be a real function with $
\mathrm{Var}_1(\varphi)<\infty$ and $\mathrm{Var}_{n}(\varphi)\to 0$ as $n\to
\infty$. We have 
\begin{equation*}
\mathsf{P}_{\mathbb{N}}(T, \varphi)=\sup\{\mathsf{P}_\mathcal{A}(T,\varphi): 
\mathcal{A}\ \mathrm{is \ a \ finite\ subset\ of }\ \mathbb{N}\}.
\end{equation*}
\end{pro}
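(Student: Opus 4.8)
The plan is to prove the two inequalities separately. Throughout, write $Z_n(\mathcal A)$ for the inner sum $\sum_{a_1,\dots,a_n\in\mathcal A}\sup_{x\in Y_\mathcal A}e^{S_n\varphi([a_1,\dots,a_n+x])}$ occurring in the definition of $\mathsf P_\mathcal A(T,\varphi)$, so that $\mathsf P_\mathcal A(T,\varphi)=\lim_n\frac1n\log Z_n(\mathcal A)$, the limit existing for every $\mathcal A$ by Proposition \ref{pp1}. \textbf{The easy inequality.} For the bound $\sup_\mathcal A\mathsf P_\mathcal A(T,\varphi)\le\mathsf P_\mathbb N(T,\varphi)$, monotonicity suffices: for a finite $\mathcal A\subseteq\mathbb N$ one has $Y_\mathcal A\subseteq[0,1)$ and $\mathcal A^n\subseteq\mathbb N^n$, so each summand of $Z_n(\mathcal A)$ is dominated by the corresponding summand of $Z_n(\mathbb N)$ and there are fewer of them; hence $Z_n(\mathcal A)\le Z_n(\mathbb N)$ for all $n$, and letting $n\to\infty$ and then taking the supremum over $\mathcal A$ gives the claim.

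\textbf{The hard inequality.} The tool is the bounded-distortion estimate behind Proposition \ref{pp1}: with $V_m:=\sum_{j=1}^m\mathrm{Var}_j(\varphi)$, the hypotheses $\mathrm{Var}_1(\varphi)<\infty$, $\mathrm{Var}_j(\varphi)\to0$ give $V_m<\infty$ and (by Ces\`aro) $V_m/m\to0$, and $|S_m\varphi(y)-S_m\varphi(z)|\le V_m$ whenever $y,z$ lie in a common $m$-cylinder. Writing $s(b):=\sup\{S_m\varphi(z):z\in I_m(b)\}$ for $b\in\mathbb N^m$, this yields $e^{-V_m}\sum_b e^{s(b)}\le Z_m(\mathbb N)\le e^{V_m}\sum_b e^{s(b)}$. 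Now set $P:=\mathsf P_\mathbb N(T,\varphi)$ and fix any real $P_0<P$. Since $\frac1m\log Z_m(\mathbb N)\to P$ and $V_m/m\to0$, I would choose $m$ with $\frac1m\log Z_m(\mathbb N)-\frac{2V_m+\log2}{m}>P_0$, then a finite $F\subseteq\mathbb N^m$ with $\sum_{b\in F}e^{s(b)}>\tfrac12 e^{-V_m}Z_m(\mathbb N)$ (if $Z_m(\mathbb N)=\infty$, hence $P=\infty$, take $\sum_{b\in F}e^{s(b)}$ exceeding any prescribed bound instead), and $\mathcal A=\{1,\dots,N\}$ large enough that $F\subseteq\mathcal A^m$. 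For $n=km$, restricting $Z_{km}(\mathcal A)$ to words $b^{(1)}\cdots b^{(k)}$ with each $b^{(i)}\in F$ and evaluating the supremum at the point with tail $1,1,1,\dots\in Y_\mathcal A$, the block-by-block estimate ($T^{(i-1)m}y\in I_m(b^{(i)})$ forces $S_m\varphi(T^{(i-1)m}y)\ge s(b^{(i)})-V_m$) gives
\begin{equation*}
Z_{km}(\mathcal A)\ \ge\ \sum_{b^{(1)},\dots,b^{(k)}\in F}e^{\sum_{i=1}^k s(b^{(i)})-kV_m}\ =\ e^{-kV_m}\Big(\sum_{b\in F}e^{s(b)}\Big)^k .
\end{equation*}
Taking $\frac1{km}\log$ and letting $k\to\infty$ (a valid subsequence since the limit exists by Proposition \ref{pp1}) gives $\mathsf P_\mathcal A(T,\varphi)\ge\frac1m\log\sum_{b\in F}e^{s(b)}-\frac{V_m}{m}>\frac1m\log Z_m(\mathbb N)-\frac{2V_m+\log2}{m}>P_0$. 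As $P_0<P$ was arbitrary, $\sup_\mathcal A\mathsf P_\mathcal A(T,\varphi)\ge P$, completing the proof.

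\textbf{The main obstacle} is precisely the hard inequality: one must show that truncating the infinite alphabet loses no pressure, i.e.\ there is no escape of mass to infinity. This rests on the almost-additivity of the Birkhoff sums $S_n\varphi$ (equivalently the distortion bound, which needs only $\mathrm{Var}_n(\varphi)\to0$), which makes the concatenation count $\big(\sum_{b\in F}e^{s(b)}\big)^k$ cost merely the factor $e^{-kV_m}=e^{-o(km)}$, so the finite subsystem on $\mathcal A$ already realizes a pressure within $o(1)$ of $\frac1m\log Z_m(\mathbb N)\to P$. The one delicate point is keeping track of which supremum ($Y_\mathcal A$ versus $[0,1)$) appears in each partition function and verifying that the continuations used genuinely lie in $Y_\mathcal A$; this is exactly where the hypothesis $\mathrm{Var}_n(\varphi)\to0$ enters, and it is the elementary version of the finite-approximation principle for pressure in infinite iterated function systems.
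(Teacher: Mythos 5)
Your argument is correct: the monotonicity bound for the easy inequality and the bounded-distortion plus block-concatenation argument for the hard inequality (choosing a finite set of $m$-blocks capturing most of the level-$m$ partition sum, then concatenating with a fixed tail in $Y_{\mathcal A}$ at a cost of $e^{-kV_m}=e^{-o(km)}$) constitute exactly the standard finite-approximation principle for pressure. The paper itself offers no proof of this proposition --- it is quoted from Hanus--Mauldin--Urba\'nski \cite{HaMU} (see also \cite{LiWaWuXu14}) --- and your proof follows essentially the same route as those cited sources.
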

From now onwards we consider the specific potential $$\varphi_{1}(x)=-s(s\log B+\log |T^{\prime }(x)|)$$ where $
1<B<\infty,$ $s\geq0$ and $T^{\prime }$ is the derivative of Gauss map $T.$
By applying Proposition \ref{l5} to $\varphi_{1},$ it is clear that $\varphi_{1}$ satisfies the variation condition.

By using equation \ref{deriva} of Proposition \ref{pp3}, it is easy to check that 
\begin{equation*}
{S_n (-s(s\log B+\log |T^{\prime }(x)|) )}={-ns^2\log B-s\log q_{n}^{2}}.
\end{equation*}

Therefore, the pressure function \eqref{5} with potential $\varphi_{1}$ becomes 
\begin{align*}  \label{gddd}
\mathsf P_{\mathcal{A}}(T, s(s\log B+\log |T^{\prime }(x)|) )&=\lim_{n\to\infty}
\frac1n \log \sum_{a_1,\ldots,a_n \in \mathcal{A}} e^{ S_n (-s(s\log B+\log
|T^{\prime }(x)|)) }  \notag \\
&=\lim_{n\to\infty} \frac1n \log
\sum_{a_1,\ldots,a_n \in \mathcal{A}}\left(\frac1{B^{ns}q_n^2}\right)^s.
\end{align*}

For any $
n\geq1$ and $s\geq0,$ let
\begin{equation*}
g_{n}\left( s \right) =\sum_{a_{1},\ldots ,a_{n}\in \mathcal{A}}\frac{1}{
\left( B^{ns}q_{n}^{2}\right) ^{s }}.  \label{gnro}
\end{equation*}
Define
\begin{equation*}
t_{n,B}\left( \mathcal{A}\right) =\inf \left\{ s \geq 0:g_{n}\left( s
\right) \leq 1\right\}, \label{tnB}
\end{equation*} 
\begin{align*}  \label{eqpf2}
t_B(\mathcal{A})&=\inf \{s\geq 0 :\mathsf{P}_{\mathcal{A}}(T, -s(s\log
B+\log |T^{\prime }|))\le 0\}, \\
t_B(\mathbb{N})&=\inf \{s\geq 0 :\mathsf{P}(T, -s(s\log B+\log |T^{\prime
}|))\le 0\}.
\end{align*}

If we take $\mathcal{A}$ to be a finite subset of $\mathbb{N}$, then it is
easy to check that both $g_{n}\left( s\right) $ and $\mathsf{P}_{
\mathcal{A}}(T, -s(s\log B+\log |T^{\prime }|))$ are monotonically decreasing and
continuous with respect to $s$ (for details see \cite{WaWu08}). Therefore, $t_{n,B}\left( \mathcal{A}\right) $ and $t_B(
\mathcal{A})$ are respectively the unique solutions to $g_{n}\left( s\right)
= 1 $ and $\mathsf{P}_{\mathcal{A}}(T, -s(s\log B+\log |T^{\prime }|))= 0.$

For any $M \in \mathbb{N}$, take $\mathcal{A}_{M }=\left\{ 1,2,\ldots ,M
\right\} $. For simplicity, write $t_{n,B}\left({M }\right) $ for $
t_{n,B}\left( \mathcal{A}_{M }\right) $, $t_{B}\left({M }\right) $ for $
t_{B}\left( \mathcal{A}_{M }\right) $, $t_{n,B}$ for $t_{n,B}\left( \mathbb{N
}\right) $ and $t_{B}$ for $t_{B}\left( \mathbb{N}\right) $.

From Proposition \ref{l5} and the definition of $t_{n,B}(M)$ we have the
following result.

\begin{cor} For any integer $M\in\N,$
\label{p2} 
\begin{equation*}
\lim_{n\to \infty}t_{n,B}(M)=t_B(M), \ \ \lim_{M\to \infty}t_B(M)=t_B.
\end{equation*}
Since the function of $B$ belongs to $(1,\infty),$ therefore the dimensional
number $t_{B}$ is continuous with respect to $B$ and 
\begin{equation*}
\lim_{B\to 1}t_B=1, \ \ \lim_{B\to\infty}t_{B}=1/2.
\end{equation*}
\end{cor}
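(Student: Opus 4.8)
The argument follows the template of the proof of Proposition~\ref{proWW} in \cite{WaWu08}, adapted to the potential $\varphi_{1}(x)=-s(s\log B+\log|T'(x)|)$, for which an $s^{2}\log B$ term appears in place of the $s\log B$ term there. The starting point is the reduction recorded just before the statement: using the identity $S_{n}\varphi_{1}=-ns^{2}\log B-s\log q_{n}^{2}$ one has, for a finite alphabet $\mathcal A_{M}$ (and indeed for any $\mathcal A$),
\begin{equation*}
\mathsf{P}_{\mathcal A}\big(T,-s(s\log B+\log|T'|)\big)=\lim_{n\to\infty}\frac1n\log g_{n}(s)=-s^{2}\log B+\mathsf{P}_{\mathcal A}\big(T,-s\log|T'|\big),
\end{equation*}
the limit existing by Proposition~\ref{pp1}. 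Since $B>1$ the summand $-s^{2}\log B$ is strictly decreasing on $(0,\infty)$, so --- the Gauss pressure $s\mapsto\mathsf{P}_{\mathcal A}(T,-s\log|T'|)$ being non-increasing --- the pressure of the full potential is in fact \emph{strictly} decreasing wherever finite; with continuity this makes $t_{n,B}(\mathcal A)$ and $t_{B}(\mathcal A)$ the unique zeros of $g_{n}(\cdot)-1$ and of the pressure. For $\mathcal A=\mathbb N$ I will also use the classical properties of $\mathsf{P}(s):=\mathsf{P}(T,-s\log|T'|)$ (see \cite{MaUr99,WaWu08}): it equals $+\infty$ on $(0,\tfrac12]$, is finite, real-analytic and strictly decreasing on $(\tfrac12,\infty)$, blows up to $+\infty$ as $s\to\tfrac12^{+}$, and satisfies $\mathsf{P}(1)=0$ by the Bowen--Ruelle formula (Theorem~\ref{Bowen-Ruelle Theorem}, since $\dim_{\mathrm H}[0,1)=1$). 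Consequently $s\mapsto-s^{2}\log B+\mathsf{P}(s)$ is $+\infty$ on $(0,\tfrac12]$, strictly decreasing from $+\infty$ on $(\tfrac12,\infty)$, and equals $-\log B<0$ at $s=1$; hence $t_{B}$ is its unique zero and $\tfrac12<t_{B}<1$.

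For $\lim_{n\to\infty}t_{n,B}(M)=t_{B}(M)$, fix $\varepsilon>0$. By strict monotonicity the pressure is negative at $s=t_{B}(M)+\varepsilon$, so $\lim_{n}\frac1n\log g_{n}(s)<0$ and hence $g_{n}(s)<1$ for all large $n$, giving $t_{n,B}(M)\le t_{B}(M)+\varepsilon$ eventually; if $t_{B}(M)-\varepsilon>0$ the pressure is positive at $t_{B}(M)-\varepsilon$, so $g_{n}(t_{B}(M)-\varepsilon)>1$ for all large $n$ and $t_{n,B}(M)\ge t_{B}(M)-\varepsilon$ eventually. Letting $\varepsilon\to0$ gives the claim. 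For $\lim_{M\to\infty}t_{B}(M)=t_{B}$: enlarging the alphabet only adds non-negative summands to $g_{n}$, so $\mathsf{P}_{\mathcal A_{M}}(T,\cdot)$ is non-decreasing in $M$ and bounded above by $\mathsf{P}(T,\cdot)$; comparing zeros of these strictly decreasing functions shows $t_{B}(M)$ is non-decreasing with $t_{B}(M)\le t_{B}$, so $t^{\ast}:=\lim_{M}t_{B}(M)\le t_{B}$. If $t^{\ast}<t_{B}$ then $\mathsf{P}\big(T,-t^{\ast}(t^{\ast}\log B+\log|T'|)\big)>0$, since $t_{B}$ is the unique zero of the strictly decreasing full pressure; but Proposition~\ref{l5}, applied to the potential with parameter $t^{\ast}$ (which satisfies the variation hypotheses), expresses this quantity as $\sup_{M}\mathsf{P}_{\mathcal A_{M}}\big(T,-t^{\ast}(t^{\ast}\log B+\log|T'|)\big)$, so it is positive for some $M$, whence $t_{B}(M)>t^{\ast}$ --- contradicting $t^{\ast}=\sup_{M}t_{B}(M)$. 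Hence $t^{\ast}=t_{B}$.

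It remains to treat the dependence on $B$. Rewrite the defining relation $-t_{B}^{2}\log B+\mathsf{P}(t_{B})=0$ as $\log B=G(t_{B})$, where $G(s):=\mathsf{P}(s)/s^{2}$. On $(\tfrac12,1)$ we have $\mathsf{P}(s)>\mathsf{P}(1)=0$ and $\mathsf{P}'(s)<0$, so $G'(s)=\big(s\mathsf{P}'(s)-2\mathsf{P}(s)\big)s^{-3}<0$; together with $\lim_{s\to1/2^{+}}G(s)=+\infty$ and $\lim_{s\to1^{-}}G(s)=0$, this makes $G$ a strictly decreasing homeomorphism of $(\tfrac12,1)$ onto $(0,\infty)$. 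Since $t_{B}\in(\tfrac12,1)$ by the first paragraph, $t_{B}=G^{-1}(\log B)$, which is continuous --- in fact strictly decreasing --- in $B\in(1,\infty)$, and $\lim_{B\to1^{+}}t_{B}=G^{-1}(0^{+})=1$ while $\lim_{B\to\infty}t_{B}=G^{-1}(+\infty)=\tfrac12$.

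The only genuine input is the package of analytic and monotonicity properties of the Gauss pressure $\mathsf{P}(s)$ together with its boundary behaviour at $s=\tfrac12$ and $s=1$ --- all standard from the thermodynamic formalism of the (infinite) Gauss iterated function system \cite{MaUr99,LiWaWuXu14} --- and the observation that here the $s$-strict monotonicity of the full-system pressure comes for free from $B>1$, so the squeeze arguments deliver equalities rather than one-sided inequalities. Everything else is bookkeeping parallel to the treatment of $s_{B}$ in \cite{WaWu08}.
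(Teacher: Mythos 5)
Your proof is correct and is essentially the argument the paper intends: its own ``proof'' simply defers to the treatment of $s_{B}$ in \cite{WaWu08}, and your write-up is the standard adaptation of that argument to the potential $-s(s\log B+\log|T'|)$, i.e.\ the approximation $t_{n,B}(M)\to t_{B}(M)\to t_{B}$ via Propositions \ref{pp1} and \ref{l5}, and the reparametrisation $\log B=\mathsf{P}(s)/s^{2}$ (in place of $\mathsf{P}(s)/s$) to get continuity in $B$ and the limits $1$ and $1/2$.
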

\begin{proof}
This can be proved by following similar steps as for $s_{B}$ in \cite{WaWu08}.
\end{proof}

Also note that from equation \ref{cyle} and definition of $t_{n,B}(M),$ we
have $0\leq t_{B}(M)\leq 1.$

\section{Proof of Theorem \protect\ref{indexddd}}
\begin{proof}
The proof of Theorem \ref{indexddd} consist of two cases:
\begin{itemize}

\item [\rm{(i)}] When $1<B<\infty$;

\item[\rm{(ii)}] When $B=\infty.$

\end{itemize}

\subsection {\noindent \textbf{Case 1.}}  When $1<B<\infty.$ \

 By the choice of $B$ in the statement of Theorem \ref{indexddd} one can easily
note that 
\begin{equation*}
\dim _{H}\mathcal{F}({\Phi })=\dim _{\mathrm{H}}\mathcal{F}({\Phi
:n\rightarrow B^{n}})\quad \mathrm{when}\ 1<B<\infty .
\end{equation*}
Therefore, we can simply take the approximating function $\Phi (n):=B^{n}$
and rewrite the set $\mathcal{F}(\Phi )$ as 
\begin{equation*}
\mathcal{F}(B)=\left\{ x\in [ 0,1):\begin{aligned}a_n(x)a_{n+1}(x)\geq
B^{n} \ {\rm for \ infinitely \ many \ } n\in \mathbb N \ {\rm and } \\
a_{n+1}(x)< B^{n} \ {\rm for \ all \ sufficiently \ large \ } n\in \mathbb N
\end{aligned}\right\} .
\end{equation*}

The aim is to show $\dim _{H}\mathcal{F}(B)=t_{B}.$
The details of the proof of Theorem \ref{indexddd} is divided into two
further subsections. That is finding the upper bound 
\begin{equation*}
\dim _{\mathrm{H}}\mathcal F(B)\leq t_{B},
\end{equation*}
and the lower bound 
\begin{equation*}
\dim _{\mathrm{H}}\mathcal{F}(B)\geq t_{B}
\end{equation*}
separately. 
Taken together, this will conclude our proof for {\bf{Case 1}}.
\subsubsection{ The upper bound for $\mathcal{F}(B)$ } \

For the upper bound of $\dim _{\mathrm{H}}\mathcal{F}(B),$ we split the
set $\mathcal{F}(B)$ into two sets: 
\begin{align*}
& \mathcal{F}_{1}(B)=\Big\{x\in [0,1):a_{n}(x)\geq B^{n}\ {\text{
for infinitely many}}\ n\in \mathbb{N}\Big\}\quad \mathrm{and} \\
& \mathcal{F}_{2}(B)=\left\{ x\in [ 0,1):\begin{aligned} & 1\le
a_n(x)\le B^n, a_{n+1}(x)\ge B^n/a_n(x)\  \ {\rm for \ infinitely \ many \ }
n\in \mathbb N \ {\rm and } \\ & a_{n+1}(x)< B^{n} \ {\rm for \ all \
sufficiently \ large \ } n\in \mathbb N \end{aligned}\right\} .
\end{align*}

From the definition of Hausdorff dimension it follows that 
\begin{equation*}
\dim _{\mathrm{H}}\mathcal{F}(B)=\max \{\dim _{\mathrm{H}}\mathcal{F}
_{1}(B),\dim _{\mathrm{H}}\mathcal{F}_{2}(B)\}.
\end{equation*}

The Hausdorff dimension of $\mathcal{F}_{1}(B)$ follows from Theorem \ref
{WaWu}. So it remains to obtain the upper bound for the Hausdorff dimension
of $\mathcal{F}_{2}(B).$ Recall that the pressure function $P(T,.)$ is
monotonic with respect to the potential which implies then $s_{B}\leq t_{B}.$
So, once we can show $\dim _{\mathrm{H}}\mathcal{F}_{2}(B)\leq t_{B}$, the
upper bound for the $\dim _{\mathrm{H}}\mathcal{F}(B)$ follows.

Fix $\epsilon >0$ and let $s=t_{B}+2\epsilon $. We will show that $\dim _{
\mathrm{H}}\mathcal{F}_{2}(B)\leq s$. Assume that $0<s< 1.$

By the definition of $t_{B}$, one has for any $n$ large, 
\begin{equation}
\sum_{a_{1},\cdots ,a_{n-1}\in \N}\left( \frac{1}{B^{ns}q_{n-1}^{2}}\right)
^{s}\leq \sum_{a_{1},\cdots ,a_{n-1}\in \N}\left( \frac{1}{B^{n(t_{B}+\epsilon
)}q_{n-1}^{2}}\right) ^{t_{B}+\epsilon }\cdot B^{-n\epsilon ^{2}}\leq
B^{-n\epsilon ^{2}}.  \label{f2}
\end{equation}

Recall that 
\begin{eqnarray}
\mathcal{F}_{2}(B)&=& \left\{ x\in [0,1):
\begin{aligned} & 1\le
a_n(x)\le B^n, a_{n+1}(x)\ge B^n/a_n(x)\  \ {\rm for \ infinitely \ many \ }
n\in \mathbb N \ {\rm and } \notag \\ 
& a_{n+1}(x)< B^{n} \ {\rm for \ all \ sufficiently \ large \ } n\in \mathbb N
\end{aligned}\right\}  \label{scov}
\\
&\subset& \left\{ x\in [0,1): 1\le a_n(x)\le B^n,
({B^{n}}/{a_n(x)})\leq a_{n+1}(x)< B^n\  \ {\rm for \ infinitely \ many }
n\in \mathbb N\right\}  \notag \\
& =&\bigcap_{N=1}^{\infty }\bigcup_{n\geq N}\ \left\{ x\in [ 0,1):
 1\le a_n(x)\le B^n, ({B^{n}}/{a_n(x)})\leq a_{n+1}(x)<
B^n\right\}\notag \\
& =&\bigcap_{N=1}^{\infty }\bigcup_{n\geq N}\ \mathcal{F}_{I}\cup \mathcal{F}
_{II}
\end{eqnarray}
where 
\begin{align*}
& \mathcal{F}_{I}=\left\{ x\in [0,1):1\leq a_{n}(x)<\alpha ^{n},({
B^{n}}/{a_{n}(x)})\leq a_{n+1}(x)<B^{n}\right\} \\
& \mathcal{F}_{II}=\left\{ x\in [ 0,1):\alpha ^{n}\leq a_{n}(x)\leq
B^{n},({B^{n}}/{a_{n}(x)})\leq a_{n+1}(x)<B^{n}\right\}
\end{align*}
and $\alpha ^{n}>1$.

Next we will separately find suitable covering for set $\mathcal{F}_{I}$ and 
$\mathcal{F}_{II}$ whereas the union of the coverings for both these sets
will serve as an appropriate covering for $\mathcal{F}_{2}(B).$ To proceed,
first assume that for a real number say $\alpha >1$ we have $\alpha ^{n}>1$ for large enough $n\in\N$. Also assume that for some $0<s< 1$ we have $\alpha
=B^{s}.$

The set $\mathcal{F}_{I}$ can be covered by collections of fundamental
cylinders $J_{n}$ of order $n$: 
\begin{align*}
\mathcal{F}_{I}& \subset \left\{ x\in [ 0,1):1\leq a_{n}(x)\leq \alpha
^{n},(B^{n}/a_{n}(x))\leq a_{n+1}(x)\right\} \\
& =\bigcup_{a_{1},\cdots ,a_{n-1}\in \mathbb{N}}\left\{ x\in \lbrack
0,1):a_{k}(x)=a_{k},1\leq k\leq n-1,1\leq a_{n}(x)\leq \alpha
^{n},(B^{n}/a_{n}(x))\leq a_{n+1}(x)\right\} \\
& =\bigcup_{a_{1},\cdots ,a_{n-1}\in \mathbb{N}}\bigcup_{1\leq a_{n}<\alpha
^{n}}\bigcup_{a_{n+1}\geq B^{n}/a_{n}}I_{n+1}(a_{1},\cdots ,a_{n+1}) \\
& =\bigcup_{\substack{a_{1},\cdots ,a_{n-1}\in \mathbb{N},\\1\leq a_{n}\leq \alpha
^{n}}}J_{n}(a_{1},\cdots ,a_{n}).
\end{align*}
Note that since 
\begin{equation*}
J_{n}(a_{1},\cdots ,a_{n})=\bigcup_{a_{n+1}\geq
B^{n}/a_{n}}I_{n+1}(a_{1},\cdots ,a_{n+1}),
\end{equation*}
therefore we have 
\begin{equation*}
|J_{n}(a_{1},\cdots ,a_{n})|\asymp \frac{1}{B^{n}a_{n}q_{n-1}^{2}}.
\end{equation*}

Cover the set $\mathcal{F}_{II}$ by the collection of fundamental cylinders $
J_{n-1}$ of order $n-1$: 
\begin{align*}
\mathcal{F}_{II}&\subset \Big\{x\in [0,1):a_{n}(x)\geq \alpha ^{n}
\Big\}\\ &=\bigcup_{a_{1},\cdots ,a_{n-1}\in \mathbb{N}}\Big\{x\in [
0,1):a_{k}(x)=a_{k},1\leq k\leq n-1,a_{n}(x)\geq \alpha ^{n}\Big\} \\
& =\bigcup_{a_{1},\cdots ,a_{n-1}\in \mathbb{N}}\bigcup_{a_{n}\geq \alpha
^{n}}I_{n}(a_{1},\cdots ,a_{n}) \\
& =\bigcup_{a_{1},\cdots ,a_{n-1}\in \mathbb{N}}J_{n-1}(a_{1},\cdots
,a_{n-1}).
\end{align*}
Since 
\begin{equation*}
J_{n-1}(a_{1},\cdots ,a_{n-1})=\bigcup_{a_{n}\geq \alpha
^{n}}I_{n}(a_{1},\cdots ,a_{n}),
\end{equation*}
therefore we have 
\begin{equation*}
|J_{n-1}(a_{1},\cdots ,a_{n-1})|\asymp \frac{1}{\alpha ^{n}q_{n-1}^{2}}.
\end{equation*}

Now we consider the $s$-volume of the cover of $
\mathcal{F}_{I}\bigcup \mathcal{F}_{II}$: 
\begin{align*}
& \sum_{a_{1},\cdots ,a_{n-1}\in\N} \sum_{1\leq a_{n}\leq \alpha ^{n}}\left( \frac{
1}{B^{n}a_{n}q_{n-1}^{2}}\right) ^{s}+\sum_{a_{1},\cdots ,a_{n-1}\in\N}\left( 
\frac{1}{\alpha ^{n}q_{n-1}^{2}}\right) ^{s} \\
& \asymp \sum_{a_{1},\cdots ,a_{n-1}\in \N}\alpha ^{n(1-s)}\left( \frac{1}{
B^{n}q_{n-1}^{2}}\right) ^{s}+\sum_{a_{1},\cdots ,a_{n-1}\in \N}\left( \frac{1}{
\alpha ^{n}q_{n-1}^{2}}\right) ^{s}\ \ ({\text{integrating on}}\ a_{n}) \\
& =\sum_{a_{1},\cdots ,a_{n-1}\in \N}\left[ \left( \frac{1}{\alpha ^{n}q_{n-1}^{2}}
\right) ^{s}+\left( \frac{1}{\alpha ^{n}q_{n-1}^{2}}\right) ^{s}\ \right] \
\ ({\text{by}}\ \alpha =B^{s}) \\
& \asymp \sum_{a_{1},\cdots ,a_{n-1}\in \N}\left( \frac{1}{B^{ns}q_{n-1}^{2}}
\right) ^{s}.
\end{align*}
\bigskip

Therefore, from equation \eqref{scov}, we obtain 
\begin{equation}
\mathcal{F}_{2}(B)\subset \bigcap_{N=1}^{\infty }\bigcup_{n\geq N}\ \left\{
\bigcup_{\substack{{a_{1},\cdots ,a_{n-1}\in \N}\\ {1\leq a_{n}\leq \alpha
^{n}}}}J_{n}(a_{1},\cdots ,a_{n})\ \ \bigcup \ \bigcup_{a_{1},\cdots
,a_{n-1}\in \N}J_{n-1}(a_{1},\cdots ,a_{n-1})\right\}.  \label{3f}
\end{equation}

Thus from equationa \eqref{3f} and \eqref{f2}, we obtain
$s$-dimensional  Hausdorff measure of $\mathcal{F}_{2}(B)$ as 
\begin{equation*}
\mathcal{H}^{s}(\mathcal{F}_{2}(B))\leq \liminf_{N\rightarrow \infty
}\sum_{n\geq N}^{\infty }\sum_{a_{1},\cdots ,a_{n-1}\in \N}\left( \frac{1}{
B^{ns}q_{n-1}^{2}}\right) ^{s}\leq \liminf_{N\rightarrow \infty
}\sum_{n\geq N}^{\infty }\frac{1}{B^{n\epsilon ^{2}}}=0.
\end{equation*}

This gives $\dim _{\mathrm{H}}\mathcal{F}_{2}(B)\leq s=t_{B}+2\epsilon $.
Since $\epsilon >0$ is arbitrary, we have $\dim _{\mathrm{H}}\mathcal{F}
_{2}(B)\leq t_{B}.$ Consequently,
\begin{equation}
\dim _{\mathrm{H}}\mathcal{F}(B)\leq t_{B}.  \label{UB}
\end{equation}

\subsubsection{The lower bound for $\mathcal{F}(B)$}\

In this subsection we will determine the lower bound for $\dim _{\mathrm{H}}
\mathcal{F}(B)$. Here the pressure function material will be utilised.

To prove
 $\dim _{\mathrm{H}}\mathcal{F}(B)\geq t_{B}$ it is sufficient to
show that $\dim _{\mathrm{H}}\mathcal{F}(B)\geq t_{L,B}(M)$ for all large
enough $M$ and $L$ (Corollary \ref{p2}). 
For this we will
construct a subset $\mathcal{F}_{M}(B)\subset \mathcal{F}(B)$ and use
the lower bound for Hausdorff dimension of $\mathcal{F}_{M}(B)$ to
approximate that of $\mathcal{F}(B).$

Fix $s<t_{L,B}(M).$ Let $\alpha=B^{s}$  where $\alpha \leq B$ and $\alpha ^{n}>1$ for all
large enough $n.$
Choose a rapidly increasing sequence of integers $\{n_{k}\}_{k\geq 1}$ such
that $n_{k}\gg n_{k-1},\ \forall k$ and let $n_{0}=0$.

Define the subset ${\mathcal{F}}_{M}(B)$ of $\mathcal{F}(B)$ as follows
\begin{equation}
{\mathcal{F}}_{M}(B)=\left\{ x\in [ 0,1):\begin{aligned}&
\frac{B^{n_k}}{2\alpha^{n_{k}}}\leq a_{n_{k}+1}(x)\leq
\frac{B^{n_k}}{\alpha^{n_{k}} }, a_{n_{k}}(x)=2\alpha^{n_{k}} \ \text{for\
all} \ k\geq 1\\ &\text{and }1\leq a_{j}(x)\leq M\text{, for all }j\neq
n_{k},n_{k}+1 \end{aligned}\right\} .  \label{FMS}
\end{equation}

\subsubsection{Structure of $\mathcal{F}_{M}(B)$} \

\label{CTS}

For any $n\geq 1$, define the set of strings 
\begin{equation*}
D_{n}=\left\{ \left( a_{1},\ldots ,a_{n}\right) \in \mathbb{N}^{n}:
\begin{aligned}&\frac{B^{n_k}}{2\alpha^{n_{k}}}\leq a_{n_{k}+1}(x)\leq
\frac{B^{n_k}}{\alpha^{n_{k}} }, a_{n_{k}}(x)=2\alpha^{n_{k}} \\ &\text{and
}1\leq a_{j}(x)\leq M, \ j \neq n_{k}, n_{k}+1\end{aligned}\right\} .
\end{equation*}

Recall that for any $n\geq 1$ and $\left( a_{1},\ldots ,a_{n}\right) \in
D_{n}$, we call $I_{n}\left( a_{1},\ldots ,a_{n}\right) $ a \textit{basic
cylinder of order }$n$ and
\begin{equation}
J_{n}:=J_{n}\left( a_{1},\ldots ,a_{n}\right)
:=\bigcup_{a_{n+1}}I_{n+1}(a_{1},\dots ,a_{n},a_{n+1})  \label{Jn22}
\end{equation}
a \textit{fundamental cylinder of order }$n$, where the union in (\ref{Jn22}) is taken over all $a_{n+1}$\ such\ that\ $\left( a_{1},\dots
,a_{n},a_{n+1}\right) \in D_{n+1}$.

Note that in \eqref{FMS} according to the limitations on the partial
quotients we have three distinct cases for $J_{n}.$ For $\left( a_{1},\dots
,a_{n},a_{n+1}\right) \in D_{n+1}$:
\begin{align}
n_{k-1}+1& \leq n\leq n_{k}-2,\qquad & J_{n}& =\bigcup_{1\leq a_{n+1}\leq
M}I_{n+1}(a_{1},\dots ,a_{n},a_{n+1}),  \label{FC1} \\
n& =n_{k}-1,\qquad & J_{n}& =\bigcup_{a_{n+1}=2\alpha
^{n}}I_{n+1}(a_{1},\dots ,a_{n},a_{{n_{1}}+1}),  \label{FC2} \\
n& =n_{k},\qquad & J_{n}& =\bigcup_{\frac{B^{n}}{2\alpha ^{n}}\leq
a_{n+1}\leq \frac{B^{n}}{\alpha ^{n}}}I_{n+1}(a_{1},\dots ,a_{n},a_{n+1}).
\label{FC3}
\end{align}

Then,
\begin{equation*}
{\mathcal{F}}_{M}(B)=\bigcap_{n=1}^{\infty }\bigcup_{\left( a_{1},\ldots
,a_{n}\right) \in D_{n}}J_{n}\left( a_{1},\ldots ,a_{n}\right) .
\end{equation*}

\subsubsection{Lengths of fundamental cylinders} \

In the following subsection we will estimate the lengths of the fundamental
cylinders defined in subsection \ref{CTS}.

\noindent \textbf{\ I.} If $n_{k-1}+1\leq n\leq n_{k}-2$ then from equation 
\ref{FC1} and using equation \ref{cyle} we have 
\begin{eqnarray}
\left\vert J_{n}(a_{1},\dots ,a_{n})\right\vert &=&\sum\limits_{1\leq
a_{n+1}\leq M}\left\vert I_{n+1}\left( a_{1},\dots ,a_{n},a_{n+1}\right)
\right\vert  \notag \\
&=&\sum\limits_{1\leq a_{n+1}\leq M}\frac{1}{q_{n+1}\left(
q_{n+1}+q_{n}\right) }  \label{idont} \\
&=&\sum\limits_{{a_{n+1}}=1}^{M}\frac{1}{q_{n}}\left( \frac{1}{q_{n+1}}-
\frac{1}{q_{n+1}+q_{n}}\right)  \notag \\
&=&\frac{1}{q_{n}}\sum\limits_{{a_{n+1}}=1}^{M}\left( \frac{1}{
a_{n+1}q_{n}+q_{n-1}}-\frac{1}{\left( a_{n+1}+1\right) q_{n}+q_{n-1}}\right)
\notag \\
&=&\frac{1}{q_{n}}\left( \frac{1}{q_{n}+q_{n-1}}-\frac{1}{\left( M+1\right)
q_{n}+q_{n-1}}\right)  \notag \\
&=&\frac{M}{\left( \left( M+1\right) q_{n}+q_{n-1}\right) \left(
q_{n}+q_{n-1}\right) }.  \notag
\end{eqnarray}

Also, from equation \ref{idont} we have 
\begin{equation}
\frac{1}{6q_{n}^{2}}\leq |J_{n}(a_{1},\cdots ,a_{n})|\leq \frac{1}{q_{n}^{2}}.  \label{SC1}
\end{equation}
In particular for $n=n_{k}+1,$ 
\begin{equation}
\frac{1}{24B^{2n}q_{n-2}^{2}}\leq |J_{n}(a_{1},\cdots ,a_{n})|\leq \frac{1}{
4B^{2n}q_{n-2}^{2}}.  \label{sdee}
\end{equation}
\noindent \textbf{\ II.} If $n=n_{k}-1$ then from equation \ref{FC2} and
following the same steps as for case \noindent \textbf{I} we have 
\begin{equation*}
|J_{n}(a_{1},\ldots ,a_{n})|=\frac{1}{(2\alpha ^{n}q_{n}+q_{n-1})((2\alpha
^{n}+1)q_{n}+q_{n-1})}
\end{equation*}

and 
\begin{equation}
\frac{1}{12\alpha ^{n+1}q_{n}^{2}}\leq |J_{n}(a_{1},\cdots ,a_{n})|\leq 
\frac{1}{2\alpha ^{n+1}q_{n}^{2}}.  \label{SC2}
\end{equation}

\noindent \textbf{III.} If $n=n_{k}$ then from equation \ref{FC3} and following the similar steps
 as for \noindent \textbf{I} we obtain 
\begin{equation*}
|J_{n}(a_{1},\ldots ,a_{n})|=\frac{\frac{B^{n}}{2\alpha ^{n}}+1}{(\frac{B^{n}
}{2\alpha ^{n}}q_{n}+q_{n-1})((\frac{B^{n}}{\alpha ^{n}}+1)q_{n}+q_{n-1})}
\end{equation*}

and 
\begin{equation*}
\frac{\alpha ^{n}}{6B^{n}q_{n}^{2}}\leq |J_{n}(a_{1},\cdots ,a_{n})|\leq 
\frac{2\alpha ^{n}}{B^{n}q_{n}^{2}}.
\end{equation*}

Further, 
\begin{equation}
\frac{1}{32\alpha ^{n}B^{n}q_{n-1}^{2}}\leq |J_{n}(a_{1},\cdots ,a_{n})|\leq 
\frac{1}{2\alpha ^{n}B^{n}q_{n-1}^{2}}.  \label{SC3}
\end{equation}

\subsubsection{Supporting measure on $\mathcal{F}_{M}(B)$} \

To construct a suitable measure supported on $\mathcal{F}_{M}(B)$ first
recall that $t_{L,B}({M})$ is the solution to 
\begin{equation*}
\sum_{a_{1},\ldots ,a_{L}\in \mathcal{A}_{M}}\left( \frac{1}{B^{Ls}q_{L}^{2}}
\right) ^{s}=1.
\end{equation*}
For $\alpha =B^{s}$ this sum becomes 
\begin{equation*}
\sum_{a_{1},\ldots ,a_{L}\in \mathcal{A}_{M}}\left( \frac{1}{\alpha
^{L}q_{L}^{2}}\right) ^{s}=1.
\end{equation*}
 
Let $m_{k}L=n_{k}-n_{k-1}-1$ for any $k\geq 1$. Note that $m_{1}L=n_{1}-1$
since we have assumed $n_{0}=0$ and define 
\begin{equation*}
w=\sum_{a_{1},\ldots ,a_{L}\in \mathcal{A}_{M}}\left( \frac{1}{\alpha
^{L}q_{L}^{2}(a_{n_{k-1}+t+1},\cdots, a_{n_{k-1}+(t+1)L})}\right) ^{s}
\end{equation*}
where $0\leq t \leq m_{k}-1.$

\noindent \textbf{Step I. }Let $1\leq m\leq m_{1}$. We first define a
positive measure for the \textit{fundamental} cylinder $J_{mL}(a_{1},\dots
,a_{mL})$ as
\begin{equation*}
\mu (J_{mL}(a_{1},\dots ,a_{mL}))=\prod_{t=0}^{m-1}\frac{1}{w}\left( \frac{1
}{\alpha ^{L}q_{L}^{2}(a_{tL+1},\dots ,a_{(t+1)L})}\right) ^{s},
\end{equation*}

and then we distribute this measure uniformly over its next offspring.

\noindent \textbf{Step II.} When $n=m_{1}L=n_{1}-1$ then define a measure
\begin{equation*}
\mu (J_{m_{1}L}(a_{1},\dots ,a_{m_{1}L}))=\prod_{t=0}^{m_{1}-1}\frac{1}{w}
\left( \frac{1}{\alpha ^{L}q_{L}^{2}(a_{tL+1},\dots ,a_{(t+1)L})}\right)
^{s}.
\end{equation*}

\noindent \textbf{Step III. }When $n=m_{1}L+1=n_{1}$ then for $
J_{n_{1}}(a_{1},\dots ,a_{n_{1}})$, define a measure
\begin{equation*}
\mu (J_{n_{1}}(a_{1},\dots ,a_{n_{1}}))=\frac{1}{2\alpha ^{n_{1}}}\mu
(J_{n_{1}-1}(a_{1},\dots ,a_{n_{1}-1}))
\end{equation*}

In other words, the measure of $J_{n_{1}-1}$ is uniformly distributed on its
next offspring $J_{n_{1}}$.

\noindent \textbf{Step IV.} When $n=n_{1}+1.$ 
\begin{equation*}
\mu (J_{n_{1}+1}(a_{1},\dots ,a_{n_{1}+1}))=\frac{2\alpha ^{n_{1}}}{B^{n_{1}}
}\mu (J_{n_{1}}(a_{1},\dots ,a_{n_{1}}))
\end{equation*}

The measure of other fundamental cylinders of level less than $n_{1}-1$ is
given by the consistency of a measure. To be more precise, for any $
n<n_{1}-1 $, suppose 
\begin{equation*}
\mu (J_{n}(a_{1},\cdots ,a_{n}))=\sum_{J_{m_{1}L}\subset J_{n}}\mu
(J_{m_{1}L}).
\end{equation*}
So for any $m<m_{1}$, the measure of fundamental cylinder $J_{mL}$ is given
by 
\begin{equation*}
\mu (J_{mL}(a_{n_{k-1}+t+1}\cdots
a_{n_{k-1}+(t+1)L}))=\sum_{J_{m_{1}L}\subset J_{mL}}\mu
(J_{m_{1}}L)=\prod_{t=0}^{m-1}\frac{1}{w}\left( \frac{1}{\alpha
^{L}q_{L}^{2}(a_{tL+1},\dots ,a_{(t+1)L})}\right) ^{s}.
\end{equation*}
 
The measure of fundamental cylinders for other levels can be defined
inductively.

For $k\geq 2$ define, 
\begin{equation*}
\mu (J_{n_{k}-1}(a_{1},\dots ,a_{n_{k}-1}))=\mu \left( J_{{n_{k-1}}
+1}(a_{1},\dots ,a_{n_{k-1}+1})\right) \cdot \prod_{t=0}^{m_{k}-1}\frac{1}{w}
\left( \frac{1}{\alpha ^{L}q_{L}^{2}(a_{n_{k-1}+tL+1},\dots
,a_{n_{k-1}+(t+1)L})}\right) ^{s},
\end{equation*}
\begin{equation*}
\mu (J_{n_{k}}(a_{1},\dots ,a_{n_{k}}))=\frac{1}{2\alpha ^{n_{k}}}\mu \left(
J_{{n_{k-1}}}(a_{1},\dots ,a_{n_{k-1}})\right) ,
\end{equation*}
and 
\begin{equation*}
\mu (J_{n_{k}+1}(a_{1},\dots ,a_{n_{k}+1}))=\frac{2\alpha ^{n_{k}}}{B^{n_{k}}}
\mu \left( J_{{n_{k}}}(a_{1},\dots ,a_{n_{k}})\right) \cdot
\end{equation*}

\subsubsection{The H\"{o}lder exponent of the measure $\protect\mu $} \

\noindent \textbf{{Estimation of $\mu (J_{n}(a_{1},\dots ,a_{n}))$.}}

In this subsection we will estimate the measure $\mu$ of the fundamental
cylinders defined above. For this we split the process into several cases.
Recall that $\alpha ^{n}>1$ for large enough $n$
which implies $\alpha ^{L}>1.$ For sufficiently large $k_{0}$
choose $\epsilon _{0}>\frac{n_{k-1}}{n_{k}}+\frac{1}{n_{k}}$ such that 
\begin{equation}
\frac{m_{k}L}{n_{k}}=\frac{n_{k}}{n_{k}}-\frac{n_{k-1}}{n_{k}}-\frac{1}{n_{k}
}\geq 1-\epsilon _{0},\ \ {\text{for all}}\ k>k_{0}.  \label{mlpro}
\end{equation}

\noindent \textbf{Case I.} When $n=mL$ for some $1\leq m<m_{1}.$ 
\begin{equation*}
\mu (J_{mL}(a_{1},\dots ,a_{mL}))\leq \prod_{t=0}^{m-1}\left( \frac{1}{
\alpha ^{L}q_{L}^{2}(a_{tL+1},\dots ,a_{(t+1)L})}\right) ^{s}\leq
\prod_{t=0}^{m-1}\left( \frac{1}{q_{L}^{2}(a_{tL+1},\dots ,a_{(t+1)L})}
\right) ^{s}.
\end{equation*}

\begin{align*}
\mu (J_{mL}(a_{1},\dots ,a_{mL}))& \leq (4^{m-1})\left( \frac{1}{
q_{mL}^{2}(a_{1},\dots ,a_{mL})}\right) ^{s}\quad (\mathrm{by\ Eq.\ref{eq
P_3}}) \\
& =\left( \frac{1}{q_{mL}^{2}(a_{1},\dots ,a_{mL})}\right) ^{s-\frac{2}{L}
}\quad (\mathrm{by\ {\text{P}_{1}}}) \\
& \leq 6|J_{mL}(a_{1},\dots ,a_{mL})|^{s-{\frac{2}{L}}}\quad (\mathrm{by}\ 
\eqref{SC1}).
\end{align*}
\noindent \textbf{Case 2.} When $n=m_{1}L=n_{1}-1.$ 
\begin{align}
\mu (J_{m_{1}L}(a_{1},\dots ,a_{m_{1}L}))& \leq \prod_{t=0}^{m_{1}-1}\left( 
\frac{1}{\alpha ^{L}q_{L}^{2}(a_{tL+1},\dots ,a_{(t+1)L})}\right) ^{s} 
\notag \\
& \leq \Big(\frac{1}{\alpha ^{m_{1}L}}\Big)^{s}\left( \frac{1}{
q_{m_{1}L}^{2}(a_{1},\dots ,a_{m_{1}L})}\right) ^{s-\frac{2}{L}}  \notag \\
& \leq \Big(\frac{1}{\alpha ^{1-\epsilon _{0}}}\Big)^{sn_{1}}\left( \frac{1}{
q_{m_{1}L}^{2}(a_{1},\dots ,a_{m_{1}L})}\right) ^{s-\frac{2}{L}}(\mathrm{by}
\ \eqref{mlpro})  \notag \\
& \leq \left( \frac{1}{\alpha ^{n_{1}}q_{n_{1}-1}^{2}}\right) ^{s-\frac{2}{L}
-\epsilon _{0}}  \label{SC21} \\
& \leq 12|J_{m_{1}L}(a_{1},\dots ,a_{m_{1}L})|^{s-{\frac{2}{L}}-\epsilon _{0}}\
\ (\mathrm{by}\ \eqref{SC2}).  \notag
\end{align}
\noindent \textbf{Case 3.} When $n=m_{1}L+1=n_{1}.$ 
\begin{align*}
\mu (J_{n_{1}}(a_{1},\dots ,a_{n_{1}}))& =\frac{1}{2\alpha ^{n_{1}}}\mu
(J_{n_{1}-1}(a_{1},\dots ,a_{n_{1}-1})) \\
& \leq \frac{1}{2\alpha ^{n_{1}}}\left( \frac{1}{\alpha
^{n_{1}}q_{n_{1}-1}^{2}}\right) ^{s-\frac{2}{L}-\epsilon _{0}}\ (\mathrm{by}
\ \eqref{SC21}) \\
& =\frac{1}{2B^{s{n_{1}}}}\left( \frac{1}{\alpha ^{n_{1}}q_{n_{1}-1}^{2}}
\right) ^{s-\frac{2}{L}-\epsilon _{0}}\ (\ \alpha =B^{s}) \\
& \leq \frac{1}{2}\left( \frac{1}{B^{n_{1}}\alpha ^{n_{1}}q_{n_{1}-1}^{2}}
\right) ^{s-\frac{2}{L}-\epsilon _{0}} \\
& \leq 16|J_{n_{1}}(a_{1},\dots ,a_{n_{1}})|^{s-{\frac{2}{L}}-\epsilon _{0}}\ (
\mathrm{by}\ \eqref{SC3}).
\end{align*}
\noindent \textbf{Case 4.} When $n=n_{1}+1.$ 
\begin{align*}
\mu (J_{n_{1}+1}(a_{1},\dots ,a_{n_{1}+1}))& =\frac{2\alpha ^{n_{1}}}{
B^{n_{1}}}\mu (J_{n_{1}}) \\
& \leq \frac{2\alpha ^{n_{1}}}{2B^{n_{1}}}\left( \frac{1}{B^{n_{1}}\alpha
^{n_{1}}q_{n_{1}-1}^{2}}\right) ^{s-\frac{2}{L}-\epsilon _{0}} \\
& \leq \left( \frac{1}{B^{2n_{1}}\alpha ^{n_{1}}q_{n_{1}-1}^{2}}\right) ^{s-
\frac{2}{L}-\epsilon _{0}} \\
& \leq 24|J_{n_{1}+1}(a_{1},\dots ,a_{n_{1}+1})|^{s-{\frac{2}{L}}-\epsilon _{0}}\ (
\mathrm{by}\ \eqref{sdee}).
\end{align*}%
Here for the second inequality, we use $B/\alpha \geq (B/\alpha )^{s}$ which
is always true for $\alpha \leq B$ and $s\leq 1$.

For a general fundamental cylinder, we only give the estimation on the
measure of $J_{n_{k}-1}(a_{1},\dots ,a_{n_{k}-1})$. The estimation for other
fundamental cylinders can be carried out similarly. Recall that 
\begin{equation*}
\mu (J_{n_{k}-1}(a_{1},\dots ,a_{n_{k}-1}))=\mu \left( J_{{n_{k-1}}
+1}(a_{1},\dots ,a_{n_{k-1}+1})\right) \cdot \prod_{t=0}^{m_{k}-1}\frac{1}{w}
\left( \frac{1}{\alpha ^{L}q_{L}^{2}(a_{n_{k-1}+tL+1},\dots
,a_{n_{k-1}+(t+1)L})}\right) ^{s}.
\end{equation*}
This further implies, 
\begin{align*}
\mu \left( J_{n_{k}-1}(a_{1},\dots ,a_{n_{k}-1}\right) )& \leq \left[
\prod_{j=1}^{k-1}\left( \frac{1}{B^{n_{j}}}\prod_{t=0}^{m_{j}-1}\left( \frac{
1}{\alpha ^{L}q_{L}^{2}(a_{n_{j-1}+tL+1},\dots ,a_{n_{j-1}+(t+1)L})}\right)
^{s}\right) \right] \\
& \qquad \qquad \qquad \cdot \prod_{t=0}^{m_{k}-1}\left( \frac{1}{\alpha
^{L}q_{L}^{2}(a_{n_{k-1}+tL+1},\dots ,a_{n_{k-1}+(t+1)L})}\right) ^{s} \\
& \leq \left[ \prod_{j=1}^{k-1}\left( \frac{1}{B^{n_{j}}}
\prod_{t=0}^{m_{j}-1}\left( \frac{1}{\alpha
^{L}q_{L}^{2}(a_{n_{j-1}+tL+1},\dots ,a_{n_{j-1}+(t+1)L})}\right)
^{s}\right) \right] \\
& \qquad \qquad \qquad \cdot \prod_{t=0}^{m_{k}-1}\left( \frac{1}{\alpha
^{L}q_{L}^{2}(a_{n_{k-1}+tL+1},\dots ,a_{n_{k-1}+(t+1)L})}\right) ^{s}.
\end{align*}
By similar arguments as used in (\textbf{Case 4}) for the first product and (\textbf{Case 2}) for the second product, we obtain 
\begin{align*}
\mu \left( J_{n_{k}-1}(a_{1},\dots ,a_{n_{k}-1})\right) & \leq
\prod_{j=1}^{k-1}\left( \frac{1}{B^{2n_{j}}q_{{m_{j}}
L}^{2}(a_{n_{j-1}+tL+1},\dots ,a_{n_{j-1}+(t+1)L})}\right) ^{s-\frac{2}{L}
-\epsilon } \\
& \qquad \qquad \qquad \cdot \left( \frac{1}{\alpha
^{n_{k}}q_{m_{k}L}^{2}(a_{n_{k-1}+tL+1},\dots ,a_{n_{k-1}+(t+1)L})}\right)
^{s-\frac{2}{L}-\epsilon _{0}} \\
& \leq 4^{2k}\left( \frac{1}{\alpha ^{n_{k}}q_{n_{k}-1}^{2}}\right) ^{s-
\frac{2}{L}-\epsilon _{0}}\leq \left( \frac{1}{\alpha ^{n_{k}}q_{n_{k}-1}^{2}
}\right) ^{s-\frac{2}{L}-\epsilon _{0}-\frac{4}{L}} \\
& \leq 12|J_{n_{k}-1}(a_{1},\dots ,a_{n_{k}-1})|^{s-{\frac{6}{L}}-\epsilon
_{0}}\ \quad (\mathrm{by}\ \eqref{SC2}).
\end{align*}

Consequently, 
\begin{align*}
\mu (J_{n_{k}}(a_{1},\dots ,a_{n_{k}}))& =\frac{1}{2\alpha ^{n_{k}}}\mu
\left( J_{{n_{k-1}}}(a_{1},\dots ,a_{n_{k-1}})\right) \\
& \leq \frac{1}{2(B^{s})^{n_{k}}}\left( \frac{1}{\alpha
^{n_{k}}q_{n_{k}-1}^{2}}\right) ^{s-\frac{2}{L}-\epsilon _{0}-\frac{4}{L}} \\
& \leq \frac{1}{2}\left( \frac{1}{B^{n_{k}}\alpha ^{n_{k}}q_{n_{k}-1}^{2}}
\right) ^{s-\frac{2}{L}-\epsilon _{0}-\frac{4}{L}}\\
& \leq 16|J_{n_{k}}(a_{1},\dots ,a_{n_{k}})|^{s-{\frac{6}{L}}-\epsilon
_{0}}\ \quad (\mathrm{by}\ \eqref{SC3}).
\end{align*}

In summary, we have shown that 
\begin{equation*}
\mu \left( J_{n}\left( a_{1},\ldots ,a_{n}\right) \right) \ll |J_{n}\left(
a_{1},\ldots ,a_{n}\right) |^{s-\frac{2}{L}-\epsilon _{0}-\frac{4}{L}},
\end{equation*}
for any $n\geq 1$ and $(a_{1},\ldots ,a_{n})\in D_{n}.$

\subsubsection{\noindent \textbf{Estimation of $\protect\mu (B(x,r))$.}} \

First we estimate the gaps between the adjoint fundamental cylinders
(defined in \eqref{Jn22}) of same order which will be useful for estimating 
$\mu (B(x,r)).$

Let us start by assuming $n$ is even (similar steps can be followed when $n$
is odd). Then for $(a_{1},a_{2},\ldots ,a_{n})\in D_{n}$, given a
fundamental cylinder $J_{n}\left( a_{1},a_{2},\ldots ,a_{n}\right),$
represent the distance between $J_{n}\left( a_{1},a_{2},\ldots
,a_{n}\right) $ and its left (respectively right) adjoint fundamental
cylinder say 
\begin{equation*}
J_{n}^{\prime }=J_{n}^{\prime }(a_{1},\cdots ,a_{n-1},a_{n}-1)\ (\text{if
exists})
\end{equation*}
(respectively, $J_{n}^{\prime \prime }=J_{n}^{\prime \prime }(a_{1},\cdots
,a_{n-1},a_{n}+1)$) of order $n$ by $g^{l}(a_{1},\ldots ,a_{n})$
(respectively, $g^{r}(a_{1},\ldots ,a_{n})$). Let 
\begin{equation*}
{G_{n}}\left( a_{1},a_{2},\ldots ,a_{n}\right) =\min \left\{ g^{r}\left(
a_{1},a_{2},\ldots ,a_{n}\right) ,g^{l}\left( a_{1},a_{2},\ldots
,a_{n}\right) \right\} .
\end{equation*}

Again we will consider three different cases according to the range of $n$
as in $\eqref{FC1}-\eqref{FC2}$ for $\mathcal F_{M}(B)$ in order to estimate the
lengths of gaps on both sides of fundamental cylinders $J_{n}\left(
a_{1},a_{2},\ldots ,a_{n}\right) .$

\noindent \textbf{Gap I.} When $n_{k-1}+1\leq n\leq n_{k}-2$, for
all $k\geq 1.$

There exists a basic cylinder of order $n$ contained in $I_{n-1}\left(
a_{1},a_{2},\ldots ,a_{n-1}\right) $ which lies on the left of ${I_{n}\left(
a_{1},a_{2},\ldots ,a_{n}\right)},$ also there exists a basic cylinder of
order $n$ contained in $I_{n-1}\left( a_{1},a_{2},\ldots ,a_{n-1}\right) $
which lies on the right of ${I_{n}\left( a_{1},a_{2},\ldots ,a_{n}\right)}.$
In this case, ${\left( a_{1},a_{2},\ldots ,a_{n}-1\right) \in D_{n}, \left(
a_{1},a_{2},\ldots ,a_{n}+1\right) \in D_{n}},$ whereas $g^{l}\left(
a_{1},a_{2},\ldots ,a_{n}\right) $ is just the distance between the right
endpoint of $J_{n}^{\prime }\left( a_{1},a_{2},\ldots ,a_{n}-1\right) $ and
the left endpoint of $J_{n}\left( a_{1},a_{2},\ldots ,a_{n}\right) .$

The the right endpoint of $ J_{n}^{\prime } \left( a_{1},a_{2},\ldots ,a_{n}-1\right) $ is
the same as the left endpoint of ${I_{n}\left( a_{1},a_{2},\ldots ,a_{n}\right)}. $ Since $n$ is even, from equation \ref{gp} this has formula
$
\frac{p_{n}}{q_{n}}.$

Note that the left endpoint of $J_{n}\left( a_{1},a_{2},\ldots
,a_{n}\right) $ lies on the extreme left of all the constituent cylinders $
\left\{ I_{n+1}\left( a_{1},a_{2},\ldots ,a_{n-1},a_{n},a_{n+1}\right)
:1\leq a_{n+1}\leq M\right\} $. This tells us that $a_{n+1}=M$. Since $n+1$
is odd, again from equation \ref{gp} this has formula

\begin{equation*}
\frac{\left( Mp_{n}+p_{n-1}\right) +p_{n}}{\left( Mq_{n}+q_{n-1}\right)
+p_{n}}=\frac{\left( M+1\right) p_{n}+p_{n-1}}{\left( M+1\right)
q_{n}+q_{n-1}}.
\end{equation*}
Therefore, we have 
\begin{eqnarray*}
g^{l}\left( a_{1},a_{2},\ldots ,a_{n}\right) &=&\frac{\left( M+1\right)
p_{n}+p_{n-1}}{\left( M+1\right) q_{n}+q_{n-1}}-\frac{p_{n}}{q_{n}} \\
&=&\frac{p_{n-1}q_{n}-q_{n-1}p_{n}}{\left( \left( M+1\right)
q_{n}+q_{n-1}\right) q_{n}}=\frac{1}{\left( \left( M+1\right)
q_{n}+q_{n-1}\right) q_{n}}.
\end{eqnarray*}
Whereas in this case $g^{r}\left( a_{1},a_{2},\ldots ,a_{n}\right) $ is just
the distance between the right endpoint of $J_{n}\left( a_{1},a_{2},\ldots
,a_{n}\right) $ and the left endpoint of $J_{n}^{\prime \prime }\left(
a_{1},a_{2},\ldots ,a_{n}+1\right) $.

The right endpoint of $J_{n}\left( a_{1},a_{2},\ldots ,a_{n}\right) $ is the
same as the right endpoint of ${I_{n}\left( a_{1},a_{2},\ldots ,a_{n}\right)}.$ Since $n$ is even, again using equation \ref{gp} this has formula
\begin{equation*}
\frac{p_{n}+p_{n-1}}{q_{n}+q_{n-1}}.
\end{equation*}

Also, the left endpoint of $J_{n}^{\prime \prime }\left( a_{1},a_{2},\ldots
,a_{n}+1\right) $ lies on the extreme left of all the constituent cylinders $
\left\{ I_{n+1}\left( a_{1},a_{2},\ldots ,a_{n-1},a_{n}+1,a_{n+1}\right)
:1\leq a_{n+1}\leq M\right\} $. This tells us that $a_{n+1}=M$. Since $n+1$
is odd, from ${\rm (P_{3})}$ this is given by
\begin{equation*}
\frac{\left( M+1\right) \left[ \left( a_{n}+1\right) p_{n-1}+p_{n-2}\right]
+p_{n-1}}{\left( M+1\right) \left[ \left( a_{n}+1\right) q_{n-1}+q_{n-2}
\right] +q_{n-1}}=\frac{\left( M+1\right) \left( p_{n}+p_{n-1}\right)
+p_{n-1}}{\left( M+1\right) \left( q_{n}+q_{n-1}\right) +q_{n-1}}.
\end{equation*}

Therefore, 
\begin{eqnarray*}
g^{r}\left( a_{1},a_{2},\ldots ,a_{n}\right) &=&\frac{\left( M+1\right)
\left( p_{n}+p_{n-1}\right) +p_{n-1}}{\left( M+1\right) \left(
q_{n}+q_{n-1}\right) +q_{n-1}}-\frac{p_{n}+p_{n-1}}{q_{n}+q_{n-1}} \\
&=&\frac{1}{\left( \left( M+1\right) \left( q_{n}+q_{n-1}\right)
+q_{n-1}\right) \left( q_{n}+q_{n-1}\right) }.
\end{eqnarray*}

Hence
\begin{equation}
{G_{n}}\left( a_{1},a_{2},\ldots ,a_{n}\right) =\frac{1}{\left( \left(
M+1\right) \left( q_{n}+q_{n-1}\right) +q_{n-1}\right) \left(
q_{n}+q_{n-1}\right) }.  \label{WW17}
\end{equation}

Also, by comparing $G_{n}(a_{1},\ldots ,a_{n})$ with $J_{n}(a_{1},\ldots
,a_{n})$ we notice that
\begin{equation*}
{G_{n}}(a_{1},\ldots ,a_{n})\geq \frac{1}{2M}|J_{n}(a_{1},\ldots ,a_{n})|.
\end{equation*}

\noindent \textbf{Gap II.} When $n=n_{k}-1$, we have

In this case the left gap $g^{l}\left( a_{1},a_{2},\ldots ,a_{n}\right) $ is
larger than the distance between the left endpoint of $I_{n}\left(
a_{1},a_{2},\ldots ,a_{n-1},a_{n}\right) $ and the left endpoint of $
J_{n}\left( a_{1},a_{2},\ldots ,a_{n-1},a_{n}\right) $ whereas the right gap 
$g^{r}\left( a_{1},a_{2},\ldots ,a_{n}\right) $ is larger than the distance
between the right endpoint of $I_{n}\left( a_{1},a_{2},\ldots
,a_{n-1},a_{n}\right) $ and the right endpoint of $J_{n}\left(
a_{1},a_{2},\ldots ,a_{n-1},a_{n}\right) $.

Thus proceeding in the similar way as in {\bf Gap I}, we obtain 
\begin{eqnarray*}
g^{l}\left( a_{1},a_{2},\ldots ,a_{n}\right) &\geq &\frac{\left( 2\alpha
^{n}+1\right) p_{n}+p_{n-1}}{\left( 2\alpha ^{n}+1\right) q_{n}+q_{n-1}}-
\frac{p_{n}}{q_{n}} \\
&=&\frac{1}{\left( \left( 2\alpha ^{n}+1\right) q_{n}+q_{n-1}\right) q_{n}}.
\end{eqnarray*}
and the left gap is 
\begin{eqnarray*}
g^{r}\left( a_{1},a_{2},\ldots ,a_{n}\right) &\geq &\frac{p_{n}+p_{n-1}}{
q_{n}+q_{n-1}}-\frac{\left( 2\alpha ^{n}+1\right) p_{n}+p_{n-1}}{\left(
2\alpha ^{n}+1\right) q_{n}+q_{n-1}} \\
&=&\frac{1}{\left( \left( 2\alpha ^{n}+1\right) p_{n}+p_{n-1}\right) \left(
q_{n}+q_{n-1}\right) }
\end{eqnarray*}
Therefore, 
\begin{equation*}
g^{r}\left( a_{1},a_{2},\ldots ,a_{n}\right) \geq \frac{2\alpha ^{n}}{\left(
\left( 2\alpha ^{n}+1\right) q_{n}+q_{n-1}\right) \left(
q_{n}+q_{n-1}\right) }.
\end{equation*}

Thus
\begin{equation}
{G_{n}}\left( a_{1},a_{2},\ldots ,a_{n}\right) \geq \frac{1}{\left( \left(
2\alpha ^{n}+1\right) q_{n}+q_{n-1}\right) (q_{n}+q_{n-1})}.  \label{WW18}
\end{equation}

Further, in this case we have 
\begin{equation*}
{G_{n}}(a_{1},\ldots ,a_{n})\geq \frac{1}{2}|J_{n}(a_{1},\ldots ,a_{n})|.
\end{equation*}

\noindent \textbf{Gap III.} When $n=n_{k}.$ Following the similar
arguments as in {\bf Gap II} we conclude 
\begin{eqnarray*}
g^{l}\left( a_{1},a_{2},\ldots ,a_{n}\right) &\geq &\frac{\left( \frac{B^{n}
}{\alpha ^{n}}+1\right) p_{n}+p_{n-1}}{\left( \frac{B^{n}}{\alpha ^{n}}
+1\right) q_{n}+q_{n-1}}-\frac{p_{n}}{q_{n}} \\
&=&\frac{1}{\left( \left( \frac{B^{n}}{\alpha ^{n}}+1\right)
q_{n}+q_{n-1}\right) q_{n}},
\end{eqnarray*}

and the right gap can be estimated as 
\begin{eqnarray*}
g^{r}\left( a_{1},a_{2},\ldots ,a_{n}\right) &\geq &\frac{p_{n}+p_{n-1}}{
q_{n}+q_{n-1}}-\frac{\left( \frac{B^{n}}{2\alpha ^{n}}+1\right) p_{n}+p_{n-1}
}{\left( \frac{B^{n}}{2\alpha ^{n}}+1\right) q_{n}+q_{n-1}} \\
&=&\frac{\frac{B^{n}}{2\alpha ^{n}}}{\left( \left( \frac{B^{n}}{2\alpha ^{n}}
+1\right) q_{n}+q_{n-1}\right) \left( q_{n}+q_{n-1}\right) }.
\end{eqnarray*}
Thus we have, 
\begin{equation}
{G_{n}}\left( a_{1},a_{2},\ldots ,a_{n}\right) \geq \frac{1}{\left( \left( 
\frac{B^{n}}{\alpha ^{n}}+1\right) q_{n}+q_{n-1}\right) (q_{n}+q_{n-1})},
\end{equation}
and 
\begin{equation*}
{G_{n}}(a_{1},\ldots ,a_{n})\geq \frac{1}{4}|J_{n}(a_{1},\ldots ,a_{n})|.
\end{equation*}

\subsubsection{The measure $\protect\mu $ on general ball $B(x,r)$}\

Now we are in a position to estimate the measure $\mu $ on general ball $
B(x,r).$ Fix $x\in \mathcal F_{M}(B)$ and let $B(x,r)$ be a ball centred at $x$ with
radius $r$ small enough. There exists a unique sequence $a_{1},a_{2},\cdots
a_{n}$ such that $x\in J_{n}(a_{1},\cdots ,a_{n})$ for each $n\geq 1$ and 
\begin{equation*}
G_{n+1}(a_{1},\ldots ,a_{n+1})\leq r<G_{n}(a_{1},\ldots ,a_{n}).
\end{equation*}
It is clear, by the definition of $G_{n}$ that $B(x,r)$ can intersect only
one fundamental cylinder of order $n$ i.e $J_{n}(a_{1},\ldots ,a_{n}).$

\noindent \textbf{Case I.} $n=n_{k}$. Since in this case 
\begin{equation*}
|I_{n_{k}+1}(a_{1},\ldots ,a_{n_{k}+1})|=\frac{1}{
q_{n_{k}+1}(q_{n_{k}+1}+q_{n_{k}})}\geq \frac{1}{6a_{n_{k+1}}^{2}{
q_{n_{k}}^{2}}}\geq \frac{\alpha ^{2{n_{k}}}}{6{B^{2n_{k}}}{q_{n_{k}}^{2}}},
\end{equation*}
the number of fundamental cylinders of order $n_{k}+1$ contained in $
J_{n_{k}}(a_{1},\ldots ,a_{n_{k}})$ that the ball $B(x,r)$ intersects is at
most 
\begin{equation*}
2r\frac{6B^{2n_{k}}}{\alpha ^{2n_{k}}}q_{n_{k}}^{2}+2\leq 24r\frac{B^{2n_{k}}
}{\alpha ^{2n_{k}}}q_{n_{k}}^{2}.
\end{equation*}
Therefore, 
\begin{align*}
\mu (B(x,r))& \leq \min \Big\{\mu (J_{n_{k}}),24r\frac{B^{2n_{k}}}{\alpha
^{2n_{k}}}q_{n_{k}}^{2}\mu (J_{n_{k}+1})\Big\} \\
& \leq \mu (J_{n_{k}})\min \Big\{1,48r\frac{B^{n_{k}}}{\alpha ^{n_{k}}}
q_{n_{k}}^{2}\Big\} \\
& \leq c|J_{n_{k}}|^{s-\frac{6}{L}-\epsilon _{0}}\min \Big\{1,48r\frac{
B^{n_{k}}}{\alpha ^{n_{k}}}q_{n_{k}}^{2}\Big\} \\
& \leq c\Big(\frac{2\alpha ^{n_{k}}}{B^{n_{k}}q_{n_{k}}^{2}}\Big)^{s-\frac{6
}{L}-\epsilon _{0}}(48r\frac{B^{n_{k}}}{\alpha ^{n_{k}}}q_{n_{k}}^{2})^{s-
\frac{6}{L}-\epsilon _{0}} \\
& \leq c_{0}r^{s-\frac{6}{L}-\epsilon _{0}}.
\end{align*}
Here we use $\min \{a,b\}\leq a^{1-s}b^{s}$ for any $a,b>0$ and $0\leq s\leq
1$.

\noindent \textbf{Case II.} $n=n_{k}-1$. In this case, since 
\begin{equation*}
|I_{n_{k}}(a_{1},\ldots ,a_{n_{k}})|=\frac{1}{
q_{n_{k}}(q_{n_{k}}+q_{n_{k}-1})}\geq \frac{1}{6a_{n_{k}}^{2}{
q_{n_{k}-1}^{2}}}\geq \frac{1}{24\alpha ^{2{n_{k}}}{q_{n_{k}-1}^{2}}},
\end{equation*}
the number of fundamental intervals of order $n_{k}$ contained in $
J_{n_{k}-1}(a_{1},\ldots ,a_{n_{k}-1})$ that the ball $B(x,r)$ intersects is
at most 
\begin{equation*}
48r\alpha ^{2n_{k}}q_{n_{k}-1}^{2}+2\leq 96r\alpha ^{2n_{k}}q_{n_{k}-1}^{2}.
\end{equation*}
Therefore, 
\begin{align*}
\mu (B(x,r))& \leq \min \Big\{\mu (J_{n_{k}-1}),96r\alpha
^{2n_{k}}q_{n_{k}-1}^{2}\mu (J_{n_{k}})\Big\} \\
& \leq \mu (J_{n_{k}-1})\min \Big\{1,48r\alpha ^{n_{k}}q_{n_{k}-1}^{2}\Big\}
\\
& \leq 12|J_{n_{k}-1}|^{s-\frac{6}{L}-\epsilon _{0}}\min \Big\{1,48r\alpha
^{n_{k}}q_{n_{k}-1}^{2}\Big\} \\
& \leq 12\Big(\frac{1}{2\alpha ^{n_{k}}q_{n_{k}-1}^{2}}\Big)^{s-\frac{6}{L}
-\epsilon _{0}}(48r\alpha ^{n_{k}}q_{n_{k}-1}^{2})^{s-\frac{6}{L}-\epsilon
_{0}} \\
& \leq c_{0}r^{s-\frac{6}{L}-\epsilon _{0}}.
\end{align*}

\noindent \textbf{Case III.} $n_{k-1}+1\leq n\leq n_{k}-2$. Since in this
case $1\leq a_{n}(x)\leq M$ and $|J_{n}|\asymp 1/q_{n}^{2}$ thus we have 
\begin{align*}
\mu (B(x,r))& \leq \mu (J_{n})\leq c|J_{n}|^{s-\frac{6}{L}-\epsilon _{0}} \\
& \leq c\left( \frac{1}{q_{n}^{2}}\right) ^{s-\frac{6}{L}-\epsilon _{{}}0} \\
& \leq c4M^{2}\left( \frac{1}{q_{n+1}^{2}}\right) ^{s-\frac{6}{L}-\epsilon
_{0}} \\
& \leq c24M^{2}{|}J_{n+1}|^{s-\frac{6}{L}-\epsilon {0}} \\
& \leq c48M^{3}G_{n+1}^{s-\frac{6}{L}-\epsilon } \\
& \leq c48M^{3}r^{s-\frac{6}{L}-\epsilon }.
\end{align*}

\textbf{\noindent Conclusion for the Lower Bound:} Thus combining all the
above cases and applying the mass distribution principle we have shown that $
\dim _{\mathrm{H}}\mathcal F_{M}(B)\geq s-\frac{6}{L}-\epsilon _{0}.$ Now letting $
L\rightarrow \infty $, $M\rightarrow \infty $, by the choice of $
\epsilon _{0}$ for all large enough $k$ and since $s<t_{B}$ is arbitrary, we have $s-\frac{6}{L}-\epsilon
_{0}\rightarrow t_{B}$. 

Thus we have,
\begin{equation}
\dim _{\mathrm{H}} \mathcal F(B)  \geq   \dim _{\mathrm{H}}\mathcal F_{M}(B)        \geq t_{B}.  \label{LB}
\end{equation}
Taken together results (\ref{UB}) and (\ref{LB}), completes the proof of the desired theorem for the case $1<B<\infty.$

Next we prove Theorem \ref{indexddd} for the case when $B=\infty.$
\subsection {\noindent \textbf{Case 2.}} When $B=\infty.$ \

One can easily note that 
$$a_{n}(x)a_{n+1}(x)\geq \Phi(n) \implies a_{n}(x)\geq \Phi(n)^{\frac{1}{2}} \ {\rm or} \ a_{n+1}(x)\geq \Phi(n)^{\frac{1}{2}}. $$
Thus 
\begin{equation} \label{sub}
\mathcal{F}(\Phi) \subseteq \mathcal{E}_{2}(\Phi)\subset \mathcal G_{1}(\Phi) \cup \mathcal G_{2}(\Phi), \end{equation}
where 
\begin{equation*}
\mathcal{G}_{1}(\Phi):=\left\{ x\in [0,1):a_{n}(x)\geq \Phi
(n)^{1/2}\     \text{ for infinitely many }      n\in \N \right\}
\end{equation*}
and 
\begin{equation*}
\mathcal{G}_{2}(\Phi):=\left\{ x\in  [0,1):a_{n+1}(x)\geq \Phi
(n)^{1/2}\        \text{ for infinitely many }     n\in \N \right\} .
\end{equation*}
\begin{itemize}
\item[\textbf {2a.}] If $b=1$. Then for any $\delta >0,$ $\frac{\log \log \Phi (n)}{n} \leq \log(1+ \delta)$ that is $\Phi(n)\le e^{(1+ \delta)^{n}}$ for infinitely many $n\in \N.$ Since 
$$\left\{ x\in [0,1):a_{n}(x)\geq e^{(1+ \delta)^{n}}\ {\rm for \ all \ sufficiently \ large \ } n\in \N \right\}  \subset \mathcal{F}(\Phi).$$
Therefore, by using lemma \eqref{Lu}

\begin{equation*}
\dim_{\mathrm{H}} \mathcal F(\Phi)\geq \lim_{\delta \to 0}\frac{1}{1+1+\delta}
=\frac1{2}.
\end{equation*}
Note that as $B=\infty,$ therefore for any $C>1,$ $\Phi(n)\geq C^{n}$  for all sufficiently large $n\in \N.$ Thus by \eqref{sub}

\begin{equation*}
\mathcal F(\Phi) \subseteq \mathcal{E}_{2}(\Phi) \subset \left\{ x\in [0,1):a_{n}(x)\geq C^{n} \ \rm{for \ infinitely \ many} \ n \in \N \right\}.
\end{equation*}
By Proposition \eqref{proWW}, Theorem \eqref{WaWu}
\begin{equation*}
\dim_{\mathrm{H}} \mathcal F(\Phi)\leq  \lim_{C \to \infty} s_{C}
=\frac1{2}.
\end{equation*}

\item[\textbf {2b.}] If $1<b<\infty.$ For any $\delta >0,$ $\frac{\log \log \Phi (n)}{n} \leq \log(b+ \delta)$ that is $\Phi(n)\le e^{(b+ \delta)^{n}}$ for infinitely many $n\in \N,$ whereas $\Phi(n)\geq e^{(b- \delta)^{n}}$ for all sufficiently large $n\in \N.$
\noindent Since 
$$\left\{ x\in [0,1):a_{n}(x)\geq  e^{(1+ \delta)^{n}}    \ {\rm for \ all \ sufficiently \ large \ } n\in \N \right\} \subset \mathcal{F}(\Phi).$$
Therefore, by using lemma \eqref{Lu}

\begin{equation*}
\dim_{\mathrm{H}} \mathcal F(\Phi)\geq \lim_{\delta \to 0}\frac{1}{1+b+\delta}
=\frac1{1+b}.
\end{equation*}

Further note that from the definition of the set $\mathcal{G}_i(\Phi)$ it is clear that 
\begin{equation*}
\mathcal F(\Phi)\subseteq \mathcal E_{2}(\Phi) \subset \left\{x\in[0,1): a_n(x)\ge e^{\frac{1}{2(b-\delta)}(b-\delta)^n} \ \mathrm{
for \ infinitely \ many  \ }n\in\mathbb{N}\right\}.
\end{equation*}
By Lemma \ref{Lu}
$$\dim_{\mathrm{H}} \mathcal F(\Phi)\leq  \lim_{\delta \to 0} \frac{1}{1+b-\delta}=\frac{1}{1+b}.$$

\item[\textbf {2c}] If $b=\infty$. Then by using the same argument as for showing the upper bound in case 2b we have 
  for any $C>1,$ $\Phi(n)\geq e^{C^{n}}$  for all sufficiently large $n\in \N.$ Thus by \eqref{sub}

\begin{equation*}
\mathcal F(\Phi) \subseteq \mathcal{E}_{2}(\Phi) \subset \left\{ x\in [0,1):a_{n}(x)\geq e^{C^{n}} \ \rm{for \ infinitely \ many} \ n \in \N \right\}.
\end{equation*}
By Proposition \eqref{proWW}, Theorem \eqref{WaWu}
\begin{equation*}
\dim_{\mathrm{H}} \mathcal F(\Phi)\leq  \lim_{C \to \infty} \frac{1}{C+1}=0.
\end{equation*}

\end{itemize}
This completes the proof of Theorem \ref{indexddd}. \end{proof}


Finally, we remark that it is possible to generalise the set $\mathcal F(\Phi)$ to the more general set of the form, for any $m\geq 2$

\begin{equation*}
\mathcal{F}_{m}(\Phi )=\left\{ x\in \lbrack 0,1):%
\begin{aligned}\prod_{k=1}^m a_{n+k-1}(x)\geq \Phi(n) \ {\rm for \
infinitely \ many \ } n\in \mathbb N \ {\rm and } \\ \prod_{k=1}^{m-1} a_{n+k-1}(x)< \Phi(n) \
{\rm for \ all \ sufficiently \ large \ } n\in \mathbb N \end{aligned}%
\right\}.
\end{equation*}

By following the same method as we have used for the proof of
Theorem \ref{indexddd}, we can show that:

\begin{thm}
\label{index}Let $\Phi :\mathbb{N}\rightarrow (1,\infty)$ be any function with ${\displaystyle 
\lim_{n\to \infty}} \Phi(n)=\infty.$ Define $B,b$ as in Theorem \ref{indexddd}. Then

\begin{itemize}

\item[\rm{1.}]  $ \dim _{\mathrm{H}}\mathcal{F}_{m}(\Phi )=\inf \{s\geq 0:\mathsf{P}
(T,-g_{m}\log B-s\log |T^{\prime }|)\leq 0\}$ when $1<B<\infty$, where $g_{1}=s,$ $g_{m}= \frac{sg_{m-1}(s)}{1-s+g_{m-1}(s)}$ for $m\geq2;$

\item[\rm{2.}]  $\dim_{\mathrm{H}} \mathcal{F}_{m}(\Phi)=1/(1+b)$  when $B=\infty.$\end{itemize}

\end{thm}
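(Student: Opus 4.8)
The plan is to re-run, almost verbatim, the two-case proof of Theorem~\ref{indexddd}, with the recursively defined exponents $g_j$ taking the place of $s^2$. \textbf{For the case $B=\infty$} nothing essentially new happens. From $\prod_{k=1}^{m}a_{n+k-1}(x)\ge\Phi(n)$ one of the $m$ factors is $\ge\Phi(n)^{1/m}$, so
\[
\mathcal F_m(\Phi)\ \subseteq\ \bigcup_{j=0}^{m-1}\bigl\{x\in[0,1):a_{n+j}(x)\ge\Phi(n)^{1/m}\ \text{for infinitely many }n\in\mathbb N\bigr\}.
\]
Passing from $\Phi$ to $\Phi^{1/m}$ divides $\log B$ by $m$ (so it stays $\infty$) and leaves $\log b$ unchanged, and each shifted set has the same Hausdorff dimension as its $j=0$ counterpart (apply $T^{-j}$), so Theorem~\ref{WaWu}(ii) gives $\dim_{\mathrm H}\mathcal F_m(\Phi)\le 1/(1+b)$, read as $0$ when $b=\infty$. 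For the reverse inequality I would copy steps~2a--2c of Theorem~\ref{indexddd}: build a Cantor subset of $\mathcal F_m(\Phi)$ by choosing a sparse sequence $n_k$ with $\Phi(n_k)\le\Phi(j)$ for all $j\ge n_k$ (such $n_k$ exist since $\Phi\to\infty$), setting $a_{n_k}(x)=2$, $a_{n_k+m-1}(x)=\lceil\Phi(n_k)/2\rceil$, $a_j(x)=1$ for the remaining $j$ in the block $\{n_k,\dots,n_k+m-1\}$, and $1\le a_j(x)\le M$ off the blocks --- the $m$-fold products then reach $\Phi(n_k)$ infinitely often while every shorter consecutive product stays below the relevant value of $\Phi$ --- and invoking Lemma~\ref{Lu} to push its dimension to $1/(1+b)$ as $M\to\infty$ and the $n_k$ grow as slowly as the constraints permit.

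\textbf{For the case $1<B<\infty$, upper bound}, one reduces to $\Phi(n)=B^n$ as in Theorem~\ref{indexddd}, writing $\mathcal F_m(B)$. The plan is to peel off $a_n,a_{n+1},\dots,a_{n+m-2}$ one index at a time. First discard the auxiliary pieces $\{a_{n+j}(x)\ge B^n\text{ i.o.}\}$, whose dimension is $s_B\le t_{m,B}$ by Theorem~\ref{WaWu}; then decompose the rest of $\mathcal F_m(B)$ into finitely many strata recording, for each of the $m-1$ partial quotients $a_n,\dots,a_{n+m-2}$, whether it is below or above a threshold of the shape $B^{\beta_j n}$: in an ``above'' stratum that partial quotient is deleted and one covers by fundamental cylinders of one lower order; in a ``below'' stratum one covers by cylinders of the next order and integrates it out using $\sum_{a\le B^{\beta n}}a^{-s}\asymp B^{\beta n(1-s)}$ --- exactly the two manoeuvres performed on $\mathcal F_{II}$ and $\mathcal F_I$ when $m=2$. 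Forcing the two strata at each level to contribute comparable $s$-volumes pins down the thresholds and produces the recursion $g_j=sg_{j-1}/(1-s+g_{j-1})$ with $g_1=s$; after $m-1$ rounds every stratum carries $s$-volume comparable to $\sum_{a_1,\dots,a_{n-1}}\bigl(B^{\,n g_m(s)/s}q_{n-1}^{2}\bigr)^{-s}$, which for $s>t_{m,B}$ is the $n$-th term of a geometrically decaying series (this is precisely where the pressure $\mathsf{P}(T,-g_m\log B-s\log|T'|)$ enters). Hence $\mathcal H^{s}(\mathcal F_m(B))=0$ for all $s>t_{m,B}$, so $\dim_{\mathrm H}\mathcal F_m(B)\le t_{m,B}$.

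\textbf{For the lower bound} in the same range I would imitate the construction of $\mathcal F_M(B)$ from the excerpt. Fix $s<t_{m,L,B}(M)$, the finite-alphabet, finite-level analogue of $t_{m,B}$ (convergent to $t_{m,B}$ as $L,M\to\infty$ by an analogue of Corollary~\ref{p2}), pick a rapidly increasing $\{n_k\}$, and on each block $\{n_k,\dots,n_k+m-1\}$ let $a_{n_k+j}(x)$ run over a range comparable to $B^{\gamma_j n_k}$ with $\sum_{j=0}^{m-1}\gamma_j=1$, the split $(\gamma_j)$ again dictated by the $g_j$-recursion so that every product of $m-1$ consecutive partial quotients stays $<B^n$ at all $m$ shifted indices; off the blocks demand $1\le a_j(x)\le M$. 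Then build the natural measure $\mu$ block-by-block (the analogues of Steps~I--IV), distribute it uniformly on offspring, estimate the lengths of the fundamental cylinders and the gaps $G_n$ between consecutive ones on each of the three types of level, deduce the H\"older bounds $\mu(J_n)\ll|J_n|^{\,s-O(1/L)-\epsilon_0}$ and then $\mu(B(x,r))\ll r^{\,s-O(1/L)-\epsilon_0}$ for every small ball, and conclude by the mass distribution principle that $\dim_{\mathrm H}\mathcal F_m(B)\ge s-O(1/L)-\epsilon_0$; letting $L,M\to\infty$ and $s\uparrow t_{m,B}$ gives $\dim_{\mathrm H}\mathcal F_m(B)\ge t_{m,B}$, and together with the upper bound this finishes the case $1<B<\infty$.

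The real work, and the only serious obstacle, is the combinatorics of distributing the total growth $B^n$ across $m$ consecutive partial quotients: one must check that the thresholds $B^{\beta_j n}$ (for the cover) and the ranges $B^{\gamma_j n_k}$ (for the Cantor set) can be chosen so as to respect \emph{simultaneously} all $m$ of the ``product of $m-1$ consecutive partial quotients $<B^n$'' constraints, and that the resulting exponents collapse precisely to $g_j=sg_{j-1}/(1-s+g_{j-1})$ and hence to the pressure of the potential $-g_m\log B-s\log|T'|$. Granting the case $m=2$ this is an induction on $m$, but the $m-2$ extra intermediate levels must be threaded through the gap estimates (of type \textbf{Gap~I--III}) and the measure estimates, which is the bulk of the argument; the $B=\infty$ case, by contrast, is a routine adaptation of steps~2a--2c.
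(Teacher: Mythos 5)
Your overall strategy---rerun the two-case proof of Theorem \ref{indexddd} with the recursive exponents $g_j$ in place of $s^2$---is exactly what the paper intends (its proof of Theorem \ref{index} consists of the single remark that the same method applies), and your plan for $1<B<\infty$ (stratified covers generalising the $\mathcal F_I/\mathcal F_{II}$ splitting for the upper bound, a block Cantor set with a mass-distribution argument for the lower bound, with $s_B\le t_{m,B}$ since $g_m(s)\le s$ for $s\le 1$) has the right shape, with the caveat you yourself flag: the optimisation that actually produces $g_j=sg_{j-1}/(1-s+g_{j-1})$ and its compatibility with all $m$ shifted windows is asserted rather than carried out---though the paper omits this too.

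However, your lower bound in the case $B=\infty$ contains a genuine flaw. The Cantor set you propose puts the entire required growth at the sparse positions $n_k+m-1$ (where $a_{n_k+m-1}=\lceil\Phi(n_k)/2\rceil$) and keeps every other partial quotient at most $M$. Such a set has Hausdorff dimension $0$, not $1/(1+b)$: cover it by the cylinders of order $n_k+m-1$; since all block entries are prescribed, there are at most $M^{\,n_k+m}$ admissible cylinders, each of length at most $q_{n_k+m-1}^{-2}\le 4\,\Phi(n_k)^{-2}$, so the $s$-dimensional cost is at most $4^{s}M^{\,n_k+m}\Phi(n_k)^{-2s}$, which tends to $0$ for every $s>0$ because $B=\infty$ forces $\log\Phi(n_k)/n_k\to\infty$. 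Consequently the appeal to Lemma \ref{Lu} is misplaced: that lemma concerns sets in which $a_n(x)\ge a^{b^n}$ at essentially every index, and it is precisely the doubly exponential growth of $\log q_n$ at every stage that produces the value $1/(1+b)$; no construction with bounded quotients between sparse spikes can reach it. The paper's Case 2 (and the correct argument) instead exhibits inside $\mathcal F_m(\Phi)$ a set in which every partial quotient beyond some point is comparable to $e^{(b\pm\delta)^n}$, with two-sided control chosen so that each product of $m-1$ consecutive quotients stays below $\Phi(n)$ for all large $n$ (using $\Phi(n)\ge e^{(b-\delta)^n}$ eventually) while the $m$-fold product exceeds $\Phi(n)$ infinitely often (using $\Phi(n)\le e^{(b+\delta)^n}$ infinitely often); this containment needs care, but it---not a sparse construction---is what yields $\dim_{\mathrm H}\mathcal F_m(\Phi)\ge 1/(1+b)$. (A smaller slip: in your sparse set, windows straddling a block boundary give products up to $M^{m-2}\lceil\Phi(n_k)/2\rceil$, which your condition $\Phi(n_k)\le\Phi(j)$ for $j\ge n_k$ does not force to be $<\Phi(n)$; that is fixable by rescaling, but the dimension-zero problem is not.)
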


\def\cprime{$'$} \def\cprime{$'$} \def\cprime{$'$} \def\cprime{$'$}
  \def\cprime{$'$} \def\cprime{$'$} \def\cprime{$'$} \def\cprime{$'$}
  \def\cprime{$'$} \def\cprime{$'$} \def\cprime{$'$} \def\cprime{$'$}
  \def\cprime{$'$} \def\cprime{$'$} \def\cprime{$'$} \def\cprime{$'$}
  \def\cprime{$'$} \def\cprime{$'$} \def\cprime{$'$} \def\cprime{$'$}
  \def\cprime{$'$}

%
\end{document}